\documentclass[12pt]{article}%

\usepackage{mathrsfs}
\usepackage{latexsym}
\usepackage[all]{xy}
\usepackage{syntonly}
\usepackage{nomencl}
\usepackage[bbgreekl]{mathbbol}
\usepackage{bbm}
\usepackage{amssymb, amsthm, amsfonts, amsxtra, amsmath}

\usepackage{parskip}
\makeatletter 
\def\thm@space@setup{%
 \thm@preskip=\parskip \thm@postskip=0pt
}
\def\th@remark{%
  \thm@headfont{\itshape}%
  \normalfont 
  \thm@preskip\parskip \thm@postskip=0pt
}
\makeatother

\newtheorem{Theorem}{Theorem}[section]
\newtheorem{Def}[Theorem]{Definition}
\newtheorem{Lem}[Theorem]{Lemma}
\newtheorem{Lemma}[Theorem]{Lemma}
\newtheorem{Prop}[Theorem]{Proposition}

\newtheorem{Cor}[Theorem]{Corollary}

\newtheorem{Def-Prop}[Theorem]{Definition-Proposition}
\newtheorem{Not}[Theorem]{Notation}
\newtheorem{Exa}[Theorem]{Example}

\newtheorem{Rem}[Theorem]{Remark}

\newcommand{\Z}{\mathbbm{Z}}
\newcommand{\N}{\mathbbm{N}}
\newcommand{\C}{\mathbbm{C}}
\newcommand{\R}{\mathbbm{R}}
\newcommand{\id}{\mathrm{id}}
\newcommand{\End}{\mathrm{End}}
\newcommand{\Mor}{\mathrm{Mor}}
\newcommand{\Fus}{\mathrm{Fus}}

\newcommand{\Gam}{\mathbf{\Gamma}}
\newcommand{\Lam}{\mathbf{\Lambda}}
\newcommand{\Corep}{\mathrm{Corep}}

\newcommand{\Unit}{\mathbbm{1}}

\title{Torsion-freeness for fusion rings and tensor C$^*$-categories}%
\author{Yuki Arano\thanks{Graduate School of Mathematics, University of Tokyo, 3-8-1, Komaba, Meguro-ku, Tokyo, Japan, email: {\tt arano@ms.u-tokyo.ac.jp}.  Supported by the Research Fellow of the Japan Society for the Promotion of Science and the Program for Leading Graduate Schools, MEXT, Japan.}\;\, and Kenny De Commer\thanks{Department of Mathematics, Vrije Universiteit Brussel, VUB, B-1050 Brussels, Belgium, email: {\tt kenny.de.commer@vub.ac.be}. This work was partially supported by FWO grant G.0251.15N.}}
\date{}%

\begin{document}

\maketitle

\begin{abstract}\noindent Torsion-freeness for discrete quantum groups was introduced by R. Meyer in order to formulate a version of the Baum-Connes conjecture for discrete quantum groups. In this note, we introduce torsion-freeness for abstract fusion rings. We show that a discrete quantum group is torsion-free if its associated fusion ring is torsion-free. In the latter case, we say that the discrete quantum group is \emph{strongly} torsion-free. As applications, we show that the discrete quantum group duals of the free unitary quantum groups are strongly torsion-free, and that torsion-freeness of discrete quantum groups is preserved under Cartesian and free products. We also discuss torsion-freeness in the more general setting of abstract rigid tensor C$^*$-categories.\end{abstract}

\section*{Introduction}

R. Meyer introduced in \cite{Mey08}  the notion of torsion-freeness for discrete quantum groups in order to state a plausible Baum-Connes conjecture for them. In his definition, a discrete quantum group is torsion-free if and only if any action of its dual compact quantum group  on a finite-dimensional C$^*$-algebra is equivariantly Morita equivalent to a trivial action. 

For the duals of the $q$-deformations of semi-simple compact Lie groups, torsion-freeness was shown in \cite{Gof12}, based on the fundamental computation in \cite{Voi11} for the dual of quantum $SU(2)$ \cite{Wor87}. By means of monoidal equivalence \cite{BDV05}, this result was then extended to certain free orthogonal quantum groups introduced in \cite{VDW96}. However, up till now it was not known if also the free unitary quantum groups are torsion-free \cite{VV11}. 

The main result of this note is that the duals of free unitary quantum groups are indeed torsion-free. Our result will in fact show that they are torsion-free in a strong way. For this, recall that to any discrete quantum group can be associated a \emph{fusion ring}, which remembers how tensor products of irreducible representations of its dual compact quantum group decompose into irreducibles. We then propose a definition of torsion-freeness for fusion rings in terms of \emph{cofinite fusion modules}, and show that any discrete quantum group with a torsion-free fusion ring is automatically torsion-free. We hence call a discrete quantum group \emph{strongly} torsion-free if its associated fusion ring is torsion-free. By a combinatorial argument, it is shown that the duals of free unitary quantum groups are strongly torsion-free.

The notion of (strong) torsion-freeness can in fact be captured purely within the language of rigid tensor C$^*$-categories. Although the case of discrete quantum groups can be subsumed in this more general setting, we treat it separately as to be more accessible to people unfamiliar with tensor C$^*$-categories. 

The structure of this paper is as follows. In the \emph{first section}, we define torsion-freeness for fusion rings, and show that the fusion rings coming from free unitary quantum groups are torsion-free. We also show that torsion-freeness is preserved by free products, and discuss the case of tensor products. In the \emph{second section}, we show torsion-freeness of discrete quantum groups with torsion-free fusion ring, and deduce that the duals of free unitary quantum groups are torsion-free. In the \emph{third section}, we discuss torsion-freeness for general rigid tensor C$^*$-categories, and show that Cartesian and free products of torsion-free quantum groups are torsion-free. 

\emph{Acknowledgments}: We thank C. Voigt for discussions and for bringing to our attention the problem of torsion-freeness for free unitary quantum groups. We also thank M. Bischoff for comments on a first draft of this article. The first author would like to express his gratitude to the University of Glasgow and KU Leuven for their invitation and hospitality.

\section{Torsion-free fusion rings}

Let $I$ be a pointed set with distinguished element $\Unit$, equipped with an involution $\sigma: \alpha \mapsto \overline{\alpha}$ fixing $\Unit$. Consider the free $\Z$-module $\Z_I$ with basis $I$, where we write addition by $\oplus$. Endow $\Z_I$ with the $\Z$-linear functional $\tau$ such that $\tau(\alpha) = \delta_{\alpha,\Unit }$ and extend the involution on $I$ to a $\Z$-linear involution $\sigma: x\mapsto \overline{x}$.

\begin{Def}\label{DefBasRing}
We call \emph{$I$-based ring} \cite{Lus87,EK95,Ost03b} any ring-structure $\otimes$ on the abelian group $\Z_I$, with unit $\Unit$ and such that, for all $\alpha,\beta,\gamma\in I$,
\begin{itemize}
\item $\overline{\alpha\otimes \beta} = \overline{\beta}\otimes \overline{\alpha}$, 
\item $\tau(\overline{\alpha}\otimes \beta)  = \delta_{\alpha,\beta}$,
\item $\tau(\alpha\otimes \beta \otimes\gamma)\in \N$. 
\end{itemize}
We call \emph{fusion ring} (over $I$) an $I$-based ring $(\Z_I,\otimes)$ which admits a \emph{dimension function}, that is, a unital ring homorphism $d:(\Z_I,\otimes) \rightarrow \R$ such that, for all $\alpha\in I$,
\begin{itemize}
\item $d(\alpha)>0$,
\item $d(\alpha) = d(\overline{\alpha})$. 
\end{itemize}
\end{Def}

For $(\Z_I,\otimes)$ an $I$-based ring and $x,y,z\in \Z_I$, write \begin{equation}N_{yz}^{x} = \tau(y\otimes z\otimes \overline{x}).\end{equation} Then it follows that, for $x,y\in \Z_I$ fixed, $N_{xy}^{\alpha} =0$ for almost all $\alpha\in I$, and \begin{equation}x\otimes y = \oplus_{\alpha} N_{xy}^{\alpha}  \alpha.\end{equation} 

Note that by the second conditions, $d(\alpha)\geq 1$ for $\alpha \in I$ in a fusion ring with dimension fuction $d$. 

When $x,y\in \Z_I$, we write $x\leq y$ if $y-x$ is an $\N$-linear combination of elements in $I$. 

\begin{Exa} Let $\Gamma$ be a discrete group. Consider $\Gamma$ with $\Unit$ the unit of $\Gamma$ and $\overline{g}=g^{-1}$. Then $\Z_{\Gamma}$ is a fusion ring with $g\otimes h = gh$ and dimension function $d(g)= 1$.  
\end{Exa} 

\begin{Exa} Let $\phi$ be the golden ratio. Then the ring $\Z\lbrack \phi\rbrack$ becomes a fusion ring by giving it the $\Z$-basis $\{1,\phi\}$ and trivial involution. Its dimension function is given by the inclusion $d:\Z\lbrack \phi\rbrack \hookrightarrow \R$.  
\end{Exa}

\begin{Exa}\label{ExaA1} Consider $\N=\{\mathbf{0},\mathbf{1},\ldots\}$ with distinguished element $\mathbf{0}$ and trivial involution. We can endow $\Z_\N$ with the fusion ring structure \[\mathbf{n} \otimes \mathbf{m} = \underset{k\in \{|n-m|,|n-m|+2,\ldots, n+m\}}{\oplus} \mathbf{k}\] and dimension function $d(\mathbf{n}) = n+1$. We will write this fusion ring as $A(1)$. 
\end{Exa}

\begin{Exa}\label{ExaA2} Let $F$ consist of words in the variables $\pi_+,\pi_-$ with distinguished element the empty word and the unique involution which inverts word order and sends $\pi_+$ to $\pi_-$. We can endow $\Z_F$ with the fusion ring structure \[w\otimes z = \underset{xu = w, \overline{u}y = z} {\sum_{\exists u\in F}}xy\] and the dimension function uniquely determined by $d(\pi_+) = 2$. We will write this fusion ring as $A(2)$. 
\end{Exa}

\begin{Exa} Let $(\Z_{I_1},\otimes)$ and $(\Z_{I_2},\otimes)$ be fusion rings with respective dimension functions $d_i$. Write $\odot$ for the tensor product over $\Z$. Then $\Z_{I_1}\odot \Z_{I_2}$ is a fusion ring with basis $\alpha\odot \beta$ for $\alpha \in I_1,\beta\in I_2$, with unit $\Unit  = \Unit _1\odot \Unit _2$ and involution $\overline{x\odot y} = \overline{x}\odot \overline{y}$. It has a dimension function given by $d(\alpha\odot \beta) = d_1(\alpha)d_2(\beta)$. We call it the \emph{tensor product} of $(\Z_{I_1},\otimes)$ and $(\Z_{I_2},\otimes)$. 
\end{Exa}

\begin{Exa} Let $(\Z_{I_i},\otimes)$ and $(\Z_{I_2},\otimes)$ be fusion rings with respective dimension functions $d_i$. Let $I_1*I_2$ consist of alternating words, possibly empty, in $I_1\setminus\{\Unit _1\}$ and $I_2\setminus\{\Unit _2\}$. We view $I_1*I_2$ as a pointed set with distinguished element $\Unit $ the empty word, and endow it with the involution inverting order and acting as the involution of $I_i$ on each letter. Then we can define a unique $I_1*I_2$-based fusion ring with unit $\Unit $ such that \[w\alpha\otimes \beta z = w\alpha\beta z, \quad \alpha\in I_i\setminus\{\Unit _i\},\beta\notin I_i\sqcup \{\Unit _2\},\]
\[w\alpha\otimes \beta z=  N_{\alpha,\beta}^{\Unit _i}  (w\otimes z) \oplus \left(\oplus_{\gamma\in I_i\setminus\{\Unit _i\}} N_{\alpha,\beta}^{\gamma} w\gamma z\right),\quad \alpha,\beta\in I_i\setminus\{\Unit _i\},\]and dimension function uniquely determined by $d_{\mid I_i\setminus\{\Unit _i\}} = d_i$.  We call it the \emph{free product} of $(\Z_{I_1},\otimes)$ and $(\Z_{I_2},\otimes)$
\end{Exa}

Evidently, the above two constructions can be performed with respect to an arbitrary number of components as well, viewed for example as inductive limits in the case of infinite index sets.

\begin{Exa} Let $(\Z_I,\otimes)$ be a fusion ring, and assume $I' \subseteq I$ is a pointed subset which is invariant under the involution and for which $N_{\alpha,\beta}^{\gamma} =0$ for all $\alpha,\beta\in I'$ and $\gamma \in I\setminus I'$. Then we obtain by restriction of $\otimes$ a fusion ring $(\Z_{I'},\otimes)$, which is called a \emph{fusion subring} of $(\Z_I,\otimes)$. 
\end{Exa}

For example, if $(\Z_I,\otimes)$ is a fusion ring and $\alpha \in I$, one can consider the fusion subring \emph{generated by} $\alpha\in I$, which is the smallest fusion subring of $(\Z_I,\otimes)$ containing $\alpha$. 

\begin{Rem} Clearly, an $I$-based ring is completely determined by $(I,\Unit ,\sigma)$ and a collection of numbers $N_{\beta\gamma}^{\alpha} \in \N$ satisfying the following.
\begin{enumerate}
\item $N_{\beta \gamma}^{\alpha} = N_{\gamma \overline{\alpha}}^{\overline{\beta}} = N_{\overline{\alpha} \beta}^{\overline{\gamma}}= N_{\overline{\gamma}\overline{\beta}}^{\overline{\alpha}}$ for all $\alpha,\beta,\gamma$. 
\item $N_{\Unit \beta}^{\alpha} = \delta_{\alpha,\beta}$ for all $\alpha,\beta$. 
\item For all $\beta,\gamma$ fixed, $N_{\beta\gamma}^{\alpha}=0$ for all but a finite number of $\alpha$.
\item $\sum_{\epsilon} N_{\beta\gamma}^{\epsilon} N_{\alpha\epsilon}^{\delta} = \sum_{\epsilon} N_{\alpha\beta}^{\epsilon}N_{\epsilon\gamma}^{\delta}$ for all $ \alpha,\beta,\gamma,\delta$.
\end{enumerate}
\end{Rem}\vspace{0.2cm}

\begin{Def} Let $J$ be a set. We call \emph{cofinite based module (over $J$)} for a based ring $(\Z_I,\otimes)$ a (unital) left $(\Z_I,\otimes)$-module structure $\otimes$ on $\Z_J$, together with a $\Z_I$-valued bilinear form $\langle -,-\rangle$ satisfying, for all $\alpha\in I$ and $b,c\in J$,  
\begin{itemize}
\item  $\langle \alpha\otimes b,c\rangle = \alpha\otimes \langle b,c\rangle$, 
\item $\langle b,c\rangle = \overline{\langle c,b\rangle}$,
\item $\tau(\langle b,c\rangle)  = \delta_{b,c}$,
\item $\tau(\alpha\otimes \langle b,c\rangle) \in \N$.  
\end{itemize}
\end{Def}

If $(\Z_I,\otimes)$ is a fusion ring, we will also call $(\Z_J,\otimes)$ a cofinite \emph{fusion module}. If then $d$ is a dimension function on $(\Z_I,\otimes)$, we say a $\Z$-linear function $d: \Z_J\rightarrow \R$ is a \emph{$d$-compatible dimension function} if
\begin{itemize}
\item $d(a)>0$ for all $a\in J$,
\item $d(\alpha\otimes a) = d(\alpha)d(a)$ for all $\alpha \in I,a\in J$.
\end{itemize}
Clearly there exists at least one dimension function on $(\Z_J,\otimes)$ by defining \[d(b) := d(\langle b,b_0\rangle)\] for some fixed element $b_0\in J$.

For $(\Z_J,\otimes)$ a cofinite based $(\Z_I,\otimes)$-module, we write \begin{equation} N_{x y}^{z} = \tau(x\otimes \langle y,z\rangle),\quad x\in \Z_I,y,z\in \Z_J.\end{equation} Then for $x\in \Z_I,y\in \Z_J, c\in J$,  \begin{equation}\label{EqMod}x \otimes y = \oplus_{c} N_{x y}^c c.\end{equation}

Again, we write $x\leq y$ if $y-x$ is an $\N$-linear combination of elements in $\Z_J$.

\begin{Exa} Let $A= (\Z_I,\otimes)$ be a unital $I$-based ring. Then $A$ is a cofinite $I$-based module over itself by left $\otimes$-multiplication and inner product \[\langle  \alpha,\beta\rangle = \alpha \otimes \overline{\beta}.\] We will call this the \emph{standard} $I$-based module. 
\end{Exa}

\begin{Rem}\label{RemFinMod} Clearly, a cofinite $J$-based module is completely determined by numbers $N_{\alpha b}^{c}\in \N$ satisfying the following.
\begin{enumerate}
\item For $b,c\in J$ fixed, $N_{\alpha b}^c=0$ for all but a finite number of $\alpha\in I$.
\item For $\alpha \in I,b\in J$ fixed, $N_{\alpha b}^c=0$ for all but a finite number of $c\in J$.
\item $N_{\Unit b}^c = \delta_{b,c}$ for all $b,c\in J$.
\item $N_{\alpha b}^c = N_{\overline{\alpha} c}^b$ for all $b,c\in J$ and $\alpha \in I$.
\item $\sum_{e} N_{\beta c}^{e} N_{\alpha e}^{d} = \sum_{\epsilon} N_{\alpha\beta}^{\epsilon}N_{\epsilon c}^{d}$ for all $\alpha,\beta \in I,c,d\in J$.
\end{enumerate}
The associated module structure and inner product is determined by \begin{equation} \alpha \otimes b = \sum_c N_{\alpha b}^c c,\qquad \langle b,c\rangle = \oplus_{\alpha} N_{\alpha b}^c \overline{\alpha}.\end{equation}
\end{Rem}
 
\begin{Rem} If all conditions in Remark \ref{RemFinMod} are satisfied except possibly the first, then $\Z_J$ is still a $(\Z_I,\otimes)$-module by \eqref{EqMod}. In this case, we call $(\Z_J,\otimes) $ a (general) based $(\Z_I,\otimes)$-module.
\end{Rem} 

\begin{Lem}\label{LemNoZero} Let $(\Z_J,\otimes)$ be a based $(\Z_I,\otimes)$-module. Then $\forall \alpha\in I,b\in J$, we have $\alpha\otimes b\neq 0$.
\end{Lem}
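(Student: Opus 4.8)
The plan is to argue by contradiction, exploiting the fact that $\overline{\alpha}\otimes\alpha$ always dominates the unit $\Unit$ together with the monotonicity of the left action with respect to $\leq$. Suppose that $\alpha\otimes b=0$ for some $\alpha\in I$ and $b\in J$. Applying the left action of $\overline{\alpha}$ and using $\Z$-linearity, I would obtain $\overline{\alpha}\otimes(\alpha\otimes b)=0$. By associativity of the module structure this equals $(\overline{\alpha}\otimes\alpha)\otimes b$, so it suffices to show $(\overline{\alpha}\otimes\alpha)\otimes b\neq 0$, which will be the sought contradiction.

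The first key step is to observe that $\Unit\leq \overline{\alpha}\otimes\alpha$ in $\Z_I$. Indeed, by the second axiom of an $I$-based ring, $\tau(\overline{\alpha}\otimes\alpha)=\delta_{\alpha,\alpha}=1$, so the coefficient of $\Unit$ in $\overline{\alpha}\otimes\alpha$ equals $1$; since the remaining coefficients $N_{\overline{\alpha}\alpha}^{\gamma}=\tau(\overline{\alpha}\otimes\alpha\otimes\overline{\gamma})$ all lie in $\N$, the element $\overline{\alpha}\otimes\alpha-\Unit$ is an $\N$-linear combination of elements of $I$, i.e. $\Unit\leq \overline{\alpha}\otimes\alpha$. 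The second key step is the monotonicity of the action: if $x\leq x'$ in $\Z_I$ then $x\otimes b\leq x'\otimes b$, because $(x'-x)\otimes b=\oplus_{\gamma} m_{\gamma}(\gamma\otimes b)$ with $m_\gamma\in\N$, and each $\gamma\otimes b=\sum_c N_{\gamma b}^{c}c$ has nonnegative coefficients.

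Putting these together yields $(\overline{\alpha}\otimes\alpha)\otimes b\geq \Unit\otimes b=b$, and since $b\in J$ is a basis vector we have $b\neq 0$, hence $(\overline{\alpha}\otimes\alpha)\otimes b\neq 0$, the desired contradiction. I expect the only point requiring care — rather than a genuine obstacle — to be well-definedness in the \emph{general} (possibly non-cofinite) module case, where condition~1 of Remark~\ref{RemFinMod} may fail: one must note that everything in sight stays finite, since $\alpha\otimes b$ is a finite sum by condition~2 of that remark, $\overline{\alpha}\otimes\alpha$ is a finite sum by finiteness of the ring structure constants, and associativity of the action is part of the module axioms. Notably, no dimension function is used, which matters because a based ring need not admit one.
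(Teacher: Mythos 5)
Your proof is correct and is essentially the paper's own argument: both rest on the observation that $\Unit \leq \overline{\alpha}\otimes\alpha$ (from $\tau(\overline{\alpha}\otimes\alpha)=1$ and positivity of the structure constants) and then derive a contradiction from $(\overline{\alpha}\otimes\alpha)\otimes b = 0$. The paper merely compresses your monotonicity step into the single coefficient inequality $1 = N_{\Unit, b}^{b} \leq N_{\overline{\alpha}\otimes\alpha, b}^{b} = 0$.
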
 
\begin{proof} If $\alpha\otimes b =0$, then also $\overline{\alpha}\otimes \alpha \otimes b=0$. But since $\Unit \leq \overline{\alpha}\otimes \alpha$, it follows that  $1= N_{\Unit ,b}^b \leq N_{\overline{\alpha}\otimes \alpha,b}^b = 0$, a contradiction.
\end{proof}

\begin{Def} We call a $J$-based $(\Z_I,\otimes)$-module \emph{connected} if for any $b,c\in J$, there exists $\alpha \in I$ with $N_{\alpha b}^c\neq 0$.
\end{Def} 

By the fourth identity in Remark \ref{RemFinMod}, it is enough to require the above condition for a single fixed $b$. This immediately implies that any (cofinite) based $(\Z_I,\otimes)$-module is a (possibly infinite) direct sum of connected (cofinite) based modules.

\begin{Lemma}\label{LemCof} Let $(\Z_J,\otimes)$ be a connected based $(\Z_I,\otimes)$-module. Assume that $b,c\in J$ are such that $N_{\alpha b}^c=0$ for all but a finite number of $\alpha$. Then $(\Z_J,\otimes)$ is cofinite. 
\end{Lemma}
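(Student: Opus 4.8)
The plan is to verify the single condition that upgrades a general based module to a cofinite one, namely the first condition in Remark~\ref{RemFinMod}: that for every pair $b',c'\in J$ one has $N_{\alpha b'}^{c'}=0$ for all but finitely many $\alpha\in I$. A based module already satisfies conditions (2)--(5) of that remark, so the whole task is to propagate this finiteness from the single given pair to every pair. Let me call a pair $(b',c')\in J\times J$ \emph{good} when $N_{\alpha b'}^{c'}=0$ for all but finitely many $\alpha$; the hypothesis is that $(b,c)$ is good, and the goal is that every pair is good.

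The heart of the argument is a transport step: if $(b',c')$ is good and there is some $\gamma\in I$ with $N_{\gamma b''}^{b'}\neq 0$, then $(b'',c')$ is good. To prove this I would first use reciprocity (condition (4) of Remark~\ref{RemFinMod}) to rewrite $N_{\gamma b''}^{b'}=N_{\overline\gamma b'}^{b''}$, which gives the domination $b''\leq \overline\gamma\otimes b'$. Applying $\alpha\otimes(-)$, which preserves $\leq$, and expanding $\alpha\otimes\overline\gamma=\oplus_{\epsilon}N_{\alpha\overline\gamma}^{\epsilon}\,\epsilon$ by associativity, I read off the coefficient of $c'$ to obtain
\[
N_{\alpha b''}^{c'}\;\leq\;\sum_{\epsilon}N_{\alpha\overline\gamma}^{\epsilon}\,N_{\epsilon b'}^{c'}.
\]
Here only finitely many $\epsilon$ contribute since $(b',c')$ is good, and for each such $\epsilon$ the based-ring constant $N_{\alpha\overline\gamma}^{\epsilon}$ is nonzero for only finitely many $\alpha$; the latter follows from the based-ring finiteness axiom once the $\sigma$-symmetries are used to write $N_{\alpha\overline\gamma}^{\epsilon}=N_{\overline\gamma\,\overline\epsilon}^{\overline\alpha}$, putting the free index $\alpha$ into the upper slot. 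Thus $N_{\alpha b''}^{c'}$ is nonzero for only finitely many $\alpha$, so $(b'',c')$ is good.

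It then remains to reach every pair from $(b,c)$, and I would do this in two passes using connectedness and reciprocity. First fix the given $c$: for any $b''\in J$, connectedness provides $\gamma$ with $N_{\gamma b''}^{b}\neq 0$, and transport from the good pair $(b,c)$ shows $(b'',c)$ is good; hence every $(b'',c)$ is good, and by reciprocity every $(c,b'')$ is good too. Now take an arbitrary pair $(b'',c')$: the pair $(c,c')$ is good by what was just shown, connectedness provides $\gamma$ with $N_{\gamma b''}^{c}\neq 0$, and transport from $(c,c')$ shows $(b'',c')$ is good. This covers all of $J\times J$, so condition (1) holds and the module is cofinite.

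The main obstacle is the transport step, and specifically the careful bookkeeping of reciprocity and the $\sigma$-symmetries: one must derive the domination $b''\leq\overline\gamma\otimes b'$ in the correct direction, and then apply the based-ring finiteness axiom to $N_{\alpha\overline\gamma}^{\epsilon}$ in the correct index position. Once the inequality above is in place, the two-pass combinatorics reducing to connectedness and the $b\leftrightarrow c$ symmetry is routine.
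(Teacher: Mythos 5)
Your proof is correct and follows essentially the same route as the paper's: both rest on the inequality $N_{\alpha b''}^{c'}\leq\sum_{\epsilon}N_{\alpha\overline{\gamma}}^{\epsilon}N_{\epsilon b'}^{c'}$ obtained from associativity, the based-ring finiteness axiom applied after moving the free index to the upper slot, connectedness to dominate an arbitrary basis element by some $\overline{\gamma}\otimes b'$, and the symmetry $N_{\alpha d}^{c}=N_{\overline{\alpha}c}^{d}$ to free up the second variable. Your two-pass bookkeeping simply makes explicit what the paper compresses into the phrase ``we can replace also $c$ by an arbitrary element.''
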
 

\begin{proof} For $\beta\in I$ fixed, $N_{\alpha, \beta\otimes b}^{c} =N_{\alpha\otimes \beta,b}^c =0$ for all but a finite number of $\alpha$. Take now $d\in J$ arbitrary. Then by connectedness, we can find $\beta\in I$ with $d\leq \beta\otimes b$. As $N_{\alpha d}^{c} \leq N_{\alpha, \beta\otimes b}^{c}$, it follows that $N_{\alpha d}^c=0$ for all but a finite number of $\alpha$. Since $N_{\alpha d}^c = N_{\overline{\alpha} c}^d$, we can replace also $c$ by an arbitrary element in $J$ to conclude that  $(\Z_J,\otimes)$ is cofinite. 
\end{proof}

To elements of a fusion ring acting on a based module, one can associate combinatorial data in the following way. 

\begin{Not} If $(\Z_J,\otimes)$ is a based $(\Z_I,\otimes)$-module, we write $M_J(\alpha)$ for the matrix $M_J(\alpha)_{b,c} = N_{\alpha b}^c$. We write $\Gamma_J(\alpha)$ for the associated (oriented) graph having $M_J(\alpha)$ as its adjacency matrix, so that there are $M_J(\alpha)_{b,c}$ arrows from $b$ to $c$. 
\end{Not} 

Note that on each row/column of $M_J(\alpha)$, there are only finitely many non-zero entries, which are then positive integers. It follows that we can apply $M(\alpha)$ to any vector $v\in \C^J$. Note also that $M(\alpha\otimes \beta) = M(\alpha)M(\beta)$ and $M(\overline{\alpha}) = M(\alpha)^*$. 

If $(\Z_J,\otimes)$ is a fusion module over the fusion ring $(\Z_I,\otimes)$, with a pair of compatible dimension functions $d$, we can view $d$ as a vector $D\in \C^J$ with $D_b = d(b)$. It follows that for any $\alpha \in I$, \begin{equation}\label{EqDimFor} (M_J(\alpha)D)_{c} = \sum_{b} M_J(\alpha)_{c,b} d(b) = d(\oplus_{b} N_{\mathbf{\alpha} c}^{b} b) = d(\mathscr{\alpha}\otimes c) = d(\mathscr{\alpha})D_c.\end{equation} As $D$ is also an eigenvector at this eigenvalue for $M_J(\overline{\alpha})$, we obtain by the Schur test that $M_J(\alpha)$ is a bounded matrix on $l^2(J)$ with $\|M_J(\alpha)\| \leq  d(\alpha)$.

The following is our main definition.

\begin{Def} We call a fusion ring \emph{torsion-free} if any non-zero connected cofinite based module is isomorphic to the standard based module.
\end{Def}

In the above definition, an isomorphism of based modules is assumed to take basis elements to basis elements. 

\begin{Prop}\label{PropGroup} Let $\Gamma$ be a discrete group. Then $\Gamma$ is torsion-free if and only if $(\Z_{\Gamma},\otimes)$ is torsion-free. 
\end{Prop}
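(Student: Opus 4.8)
The plan is to reinterpret connected cofinite based modules over $(\Z_{\Gamma},\otimes)$ as transitive $\Gamma$-sets, and then to match the group-theoretic and fusion-ring notions of torsion-freeness through the classification of such sets by conjugacy classes of subgroups.

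First I would show that for any based $(\Z_{\Gamma},\otimes)$-module $(\Z_J,\otimes)$, each matrix $M_J(g)$ with $g\in\Gamma$ is a permutation matrix. Indeed, since $g\otimes g^{-1}=\Unit$, the relations $M_J(\alpha\otimes\beta)=M_J(\alpha)M_J(\beta)$ and $M_J(\overline{\alpha})=M_J(\alpha)^*$ give $M_J(g)M_J(g)^*=M_J(g)^*M_J(g)=M_J(\Unit)=\id$; as $d(g)=1$ yields $\|M_J(g)\|\le 1$, the operator $M_J(g)$ is unitary on $l^2(J)$. A unitary matrix all of whose entries are non-negative integers has each column of norm one with non-negative integer entries, hence equal to a single standard basis vector, so $M_J(g)$ is the permutation matrix of a bijection $g\cdot(-)$ of $J$. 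The relation $M_J(g)M_J(h)=M_J(gh)$ shows that $g\mapsto(g\cdot -)$ is an action of $\Gamma$ on $J$, and $N_{gb}^c\in\{0,1\}$ with $N_{gb}^c=1$ exactly when $g\cdot b=c$.

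Next I would translate the remaining notions. Connectedness of $(\Z_J,\otimes)$ means that for all $b,c$ there is $g$ with $g\cdot b=c$, that is, the action is transitive. For fixed $b,c$ the set $\{g:g\cdot b=c\}$ is, when non-empty, a coset of the stabiliser $\mathrm{Stab}(b)$, so cofiniteness (finitely many $g$ with $N_{gb}^c\neq 0$) is equivalent to finiteness of $\mathrm{Stab}(b)$; by Lemma~\ref{LemCof} it is enough to check a single pair $b,c$, and in the transitive case all stabilisers are conjugate. Since an isomorphism of based modules is by assumption a basis bijection intertwining the module structures, it is precisely an isomorphism of $\Gamma$-sets, and the inner product, being determined by the action, is then automatically preserved. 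Thus non-zero connected cofinite based modules correspond, up to isomorphism, to transitive $\Gamma$-sets with finite point stabilisers, that is, to conjugacy classes of finite subgroups $H\le\Gamma$ via $\Z_{\Gamma/H}$, with the standard based module corresponding to $H=\{\Unit\}$.

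Finally I would conclude. Since two transitive $\Gamma$-sets are isomorphic if and only if their stabilisers are conjugate, $\Z_{\Gamma/H}$ is isomorphic to the standard module if and only if $H$ is conjugate to $\{\Unit\}$, i.e. if and only if $H=\{\Unit\}$. Hence $(\Z_{\Gamma},\otimes)$ is torsion-free exactly when every finite subgroup of $\Gamma$ is trivial, which is precisely the statement that $\Gamma$ has no non-trivial element of finite order. The only genuinely delicate point is the first step, namely verifying that the module axioms force each $M_J(g)$ to be an honest permutation; once the module is recognised as a transitive $\Gamma$-set, the remainder is the standard orbit--stabiliser dictionary.
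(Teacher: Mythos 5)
Your proof is correct and takes essentially the same route as the paper, whose entire proof is the one-line assertion that a non-zero connected cofinite based $(\Z_{\Gamma},\otimes)$-module is nothing but a transitive action $\Gamma \curvearrowright J$ with finite stabilizers. Your write-up simply supplies the details the paper leaves implicit (the permutation-matrix argument forcing each $M_J(g)$ to come from a bijection of $J$, and the orbit--stabiliser dictionary identifying such modules with $\Z_{\Gamma/H}$ for $H$ finite), all of which are sound.
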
 

\begin{proof} This follows immediately from the fact that a non-zero cofinite $J$-based $(\Z_{\Gamma},\otimes)$-module is determined by an action $\Gamma \curvearrowright J$ with finite stabilizers.
\end{proof}

\begin{Prop}\label{PropSubFus}  Assume $(\Z_I,\otimes)$ is a torsion-free fusion ring with dimension function $d$. Then $\Gamma = \{g\in I\mid d(g) = 1\}$ is torsion-free. 
\end{Prop}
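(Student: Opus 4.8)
The plan is to prove the contrapositive: assuming $\Gamma$ contains a non-trivial torsion element, I will produce a non-zero connected cofinite based $(\Z_I,\otimes)$-module which is not isomorphic to the standard one, contradicting the assumed torsion-freeness of $(\Z_I,\otimes)$.

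First I would pin down the elementary structure of $\Gamma$. For $g,h\in\Gamma$ one has $d(g\otimes h)=1$, and since every element of $I$ has dimension $\geq 1$ this forces $g\otimes h$ to be a single basis element of dimension $1$; together with $\tau(\overline g\otimes g)=1$ this shows $\Gamma$ is a group under $\otimes$ with inverse $g\mapsto\overline g$. The same one-term argument, via $\sum_\beta (N_{\alpha h}^\beta)^2=\tau(\alpha\otimes h\otimes\overline h\otimes\overline\alpha)=\tau(\alpha\otimes\overline\alpha)=1$, shows that for $h\in\Gamma$ the map $\alpha\mapsto\alpha\otimes h$ carries basis elements to basis elements, so $\Gamma$ acts on the right of the basis $I$ (this action need not be free). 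Suppose now $\Gamma$ is not torsion-free and fix $g\neq\Unit$ of finite order; put $H=\langle g\rangle$ and $n=|H|>1$.

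Next I would build the candidate module. Let $J=I/H$ be the set of orbits of the right $H$-action, write $[\alpha]$ for the orbit of $\alpha$, and set
\[N_{\beta[\alpha]}^{[\gamma]}=\sum_{\gamma'\in[\gamma]}N_{\beta\alpha}^{\gamma'}.\]
Because left multiplication by $\Z_I$ commutes with right multiplication by $H$, these numbers are independent of the chosen representatives and make $\Z_J$ into a left $(\Z_I,\otimes)$-module via $\beta\otimes[\alpha]=\oplus_{[\gamma]}N_{\beta[\alpha]}^{[\gamma]}[\gamma]$. I then have to check the conditions of Remark \ref{RemFinMod}: the finiteness conditions follow from cofiniteness of $(\Z_I,\otimes)$ together with finiteness of $H$, the normalisation and associativity are inherited from $\Z_I$, and the symmetry $N_{\beta[\alpha]}^{[\gamma]}=N_{\overline\beta[\gamma]}^{[\alpha]}$ follows, after summing over orbits, from the trace property of $\tau$ and $\overline P=P$, where $P=\oplus_{h\in H}h$. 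The module is connected since $\beta\otimes[\Unit]=[\beta]$ reaches every orbit. I expect this verification — in particular confirming well-definedness when the $H$-action on $I$ is not free — to be the main technical obstacle.

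Finally I would show $\Z_J$ is not standard. Any based module isomorphic to the standard one possesses a basis vector $e$ with $\langle e,e\rangle=\Unit$, namely the image of $\Unit$, since $\langle\Unit,\Unit\rangle=\Unit\otimes\Unit=\Unit$. For $\Z_J$, writing $s_\alpha$ for the order of the stabiliser of $\alpha$ in $H$, a direct computation gives the identity $s_\alpha\langle[\alpha],[\alpha]\rangle=\alpha\otimes P\otimes\overline\alpha$ in $\Z_I$, so that $d(\langle[\alpha],[\alpha]\rangle)=\tfrac{n}{s_\alpha}\,d(\alpha)^2$. On the other hand $\overline\alpha\otimes\alpha$ is invariant under right multiplication by the stabiliser of $\alpha$ and contains $\Unit$ with coefficient one, so it dominates $\oplus_{h\in\mathrm{Stab}_H(\alpha)}h$, giving $d(\alpha)^2\geq s_\alpha$. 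Hence $d(\langle[\alpha],[\alpha]\rangle)\geq n>1$ for every orbit, so no basis vector of $\Z_J$ satisfies $\langle e,e\rangle=\Unit$. Therefore $\Z_J$ is a non-zero connected cofinite based module not isomorphic to the standard one, contradicting torsion-freeness of $(\Z_I,\otimes)$; thus $\Gamma$ is torsion-free.
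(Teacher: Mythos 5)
Your proof follows the same route as the paper's --- quotient $I$ by the right action of a non-trivial finite subgroup $H=\langle g\rangle\subseteq\Gamma$ and exhibit $\Z_{I/H}$ as a non-standard connected cofinite based module --- but the key verification fails, precisely in the case you flagged and then claimed to handle. The symmetry axiom (condition 4 of Remark \ref{RemFinMod}) does \emph{not} follow from your argument when the $H$-action on $I$ has non-trivial stabilizers. Writing $s_\alpha$ for the order of the stabilizer of $\alpha$ in $H$, one has
\[\sum_{\gamma'\in[\gamma]}N_{\beta\alpha}^{\gamma'}=\frac{1}{s_\gamma}\sum_{h\in H}N_{\beta\alpha}^{\gamma\otimes h},\qquad \sum_{\alpha'\in[\alpha]}N_{\overline{\beta}\gamma}^{\alpha'}=\frac{1}{s_\alpha}\sum_{h\in H}N_{\overline{\beta}\gamma}^{\alpha\otimes h},\]
and while the trace property of $\tau$ and $\overline{P}=P$ do show that the two sums over $H$ agree, the prefactors $1/s_\gamma$ and $1/s_\alpha$ need not. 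Concretely, let $(\Z_I,\otimes)$ be the representation ring of the dihedral group of order $8$: here $I=\{\Unit,a,b,c,V\}$ with $\Gamma=\{\Unit,a,b,c\}$ the Klein four-group, $V\otimes x=V$ for all $x\in\Gamma$, and $V\otimes V=\Unit\oplus a\oplus b\oplus c$. Taking $H=\{\Unit,a\}$, the orbits are $[\Unit]=\{\Unit,a\}$, $[b]=\{b,c\}$, $[V]=\{V\}$, and your structure constants give $N_{V[\Unit]}^{[V]}=N_{V\Unit}^{V}=1$ while $N_{\overline{V}[V]}^{[\Unit]}=N_{VV}^{\Unit}+N_{VV}^{a}=2$; equivalently $\langle[\Unit],[V]\rangle=V$ whereas $\overline{\langle[V],[\Unit]\rangle}=2V$. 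So $\Z_{I/H}$ with your constants is not a based module. (This fusion ring is of course not torsion-free, so the Proposition itself is untouched; but your construction is claimed for arbitrary fusion rings, and it breaks here.) Consequently the inner-product and dimension computations in your final paragraph, which presuppose the axioms, are vacuous in the non-free case, and your proof as written only covers the case where $H$ acts freely on $I$. Note also that well-definedness, which you singled out as the main obstacle, is in fact unproblematic; it is symmetry that breaks.

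To repair the argument you would have to deal with non-trivial stabilizers: either show directly that a relation $\alpha\otimes h=\alpha$ with $h\neq\Unit$ of finite order is itself incompatible with torsion-freeness of $(\Z_I,\otimes)$, or replace $I/H$ by the finer quotient in which an orbit with stabilizer of order $s$ contributes $s$ basis elements (this is what the category of modules over the algebra $\oplus_{h\in H}h$ produces; in the example above $V$ splits into two simple modules), and re-verify the axioms there. Two remarks for comparison with the paper. First, your endgame is different: the paper derives its contradiction from the fact that the isomorphism $\Z_{I/H}\cong\Z_I$ must restrict to a $\Gamma$-equivariant bijection $\Gamma/H\to\Gamma$ on basis elements of minimal dimension, while you argue that no basis vector $e$ of $\Z_{I/H}$ satisfies $\langle e,e\rangle=\Unit$ because $d(\langle[\alpha],[\alpha]\rangle)\geq|H|>1$; both work once the module exists. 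Second, the paper itself only asserts that $\Z_{I/H}$ is a cofinite fusion module ``in a natural way'' without addressing stabilizers, so the difficulty you ran into is genuine and not dispatched there either; but the specific claim in your proof --- that the symmetry axiom follows from the trace property and $\overline{P}=P$ --- is false as stated, and this is a real gap.
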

\begin{proof} Note that $(\Gamma,\otimes)$ is a discrete group with a natural right action on $\Z_I$ by $\otimes$.  If $\Gamma$ is not torsion-free, let $H$ be a non-trivial finite subgroup of $\Gamma$. Then $(\Z_{I/H},\otimes)$ is a non-zero connected cofinite fusion module for $(\Z_I,\otimes)$ in a natural way, hence isomorphic to $(\Z_{I},\otimes)$ as a based $(\Z_I,\otimes)$-module. Since this isomorphism must preserve basis elements of minimal dimension, it sends $\Gamma/H$ to $\Gamma$ in a $\Gamma$-equivariant way, which gives a contradiction.
\end{proof}

\begin{Prop} If $(\Z_I,\otimes)$ is a fusion ring with finite $I$ and integer-valued dimension fuction, then $(\Z_I,\otimes)$ is torsion-free if and only if $I$ is a singleton.
\end{Prop}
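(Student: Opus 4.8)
The plan is to prove the two implications separately, with the integrality of the dimension function $d$ carrying the entire weight of the nontrivial direction.

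For the \emph{if} direction, I would suppose $I=\{\Unit\}$, so that $(\Z_I,\otimes)$ is just $\Z$ with its trivial ring structure. If $(\Z_J,\otimes)$ is any connected based module, then for each pair $b,c\in J$ connectedness demands some $\alpha\in I$ with $N_{\alpha b}^c\neq 0$; but $\Unit$ is the only available element, and $N_{\Unit b}^c=\delta_{b,c}$ by condition (3) of Remark \ref{RemFinMod}. Hence $b=c$, forcing $J$ to be a singleton, so $(\Z_J,\otimes)$ is the standard module. Thus $(\Z_I,\otimes)$ is torsion-free.

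For the \emph{only if} direction I would, whenever $I$ is not a singleton, exhibit an explicit non-standard module. The construction is to take the singleton index set $J=\{*\}$ and declare $N_{\alpha *}^{*}:=d(\alpha)$ for every $\alpha\in I$. The key point, and the place where the hypothesis is genuinely used, is that $d$ is integer-valued with $d(\alpha)\geq 1$, so these structure constants really lie in $\N$. The remaining axioms of Remark \ref{RemFinMod} then transcribe directly from the defining properties of a dimension function: $N_{\Unit *}^{*}=d(\Unit)=1$ yields condition (3), the identity $d(\overline\alpha)=d(\alpha)$ yields condition (4), and multiplicativity $d(\alpha\otimes\beta)=d(\alpha)d(\beta)$ is exactly the associativity relation (5) specialized to $J=\{*\}$. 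Finiteness of $I$ makes cofiniteness automatic, and connectedness is immediate since $N_{\Unit *}^{*}=1\neq 0$.

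It then remains to see that this module is not isomorphic to the standard one, and here I would simply invoke the fact that an isomorphism of based modules carries basis elements to basis elements, hence restricts to a bijection of the underlying index sets; but our module has index set of size $1$ while the standard module has index set $I$ of size at least $2$. So $(\Z_{\{*\}},\otimes)$ is a non-zero connected cofinite based module not isomorphic to the standard module, witnessing the failure of torsion-freeness. I do not anticipate a genuine obstacle: the crux is the conceptual observation that an integer-valued dimension function \emph{is} a one-dimensional based module, and once the construction is written down the verification of the module axioms is purely routine.
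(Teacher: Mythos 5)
Your proof is correct and uses exactly the paper's construction: the integer-valued dimension function is turned into a one-dimensional based module via $N_{\alpha\,\bullet}^{\bullet}=d(\alpha)$, which cannot be standard unless $I$ is a singleton. The only difference is that you spell out the axiom checks and the (trivial) converse direction, which the paper leaves implicit.
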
 

\begin{proof} Assume $(\Z_I,\otimes)$ is a fusion ring with integer-valued dimension function $d$. Then we can endow $\Z = \Z_{\{\bullet\}}$ with the non-zero cofinite $\{\bullet\}$-based $(\Z_I,\otimes)$-module structure $\alpha \otimes \bullet = d(\alpha)\bullet$. Hence if $(\Z_I,\otimes)$ is torsion-free, we must have $I \cong \{\bullet\}$.
\end{proof} 

\begin{Prop}\label{PropTorFree} The fusion ring $(\Z[\phi],\otimes)$ is torsion-free.
\end{Prop}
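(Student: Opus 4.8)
The plan is to reduce everything to a single matrix $A := M_J(\phi)$, exploiting that a based module over $(\Z[\phi],\otimes)$ is completely determined by $M_J(\Unit ) = \mathrm{id}$ together with $M_J(\phi)$, since $I = \{\Unit ,\phi\}$. First I would record the two structural constraints on $A$. As $\overline{\phi} = \phi$, the identity $M_J(\overline{\phi}) = M_J(\phi)^*$ shows that $A$ is a symmetric matrix with entries in $\N$; and since $\phi\otimes\phi = \Unit \oplus\phi$, applying $M_J$ gives the quadratic relation $A^2 = A + I$. The whole argument then becomes an analysis of symmetric non-negative integer matrices satisfying this one equation, and a normalization of the answer up to relabelling the basis.

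The key observation is that connectedness is extremely restrictive here, precisely because $I$ has only two elements. For $b\neq c$ the only possible witness to connectedness is $\alpha = \phi$, so connectedness forces $A_{bc} = N_{\phi b}^c \geq 1$ for all $b\neq c$; that is, the graph $\Gamma_J(\phi)$ is complete. Next I would read off the diagonal of $A^2 = A + I$, namely $\sum_c A_{bc}^2 = A_{bb} + 1$. The left-hand side must be a finite integer, yet completeness makes each off-diagonal term at least $1$; hence $J$ must be finite. Writing $n = |J|$ this yields $\sum_{c\neq b} A_{bc}^2 = A_{bb} + 1 - A_{bb}^2 \geq n - 1$.

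The elementary bound $a + 1 - a^2 \leq 1$ for $a\in\N$, together with nonnegativity of the left side, then forces $A_{bb}\in\{0,1\}$ and $n \leq 2$. Since the module is non-zero, $n\in\{1,2\}$; but $n = 1$ is impossible because $x^2 - x - 1$ has no integer root, so $n = 2$. Finally the off-diagonal entry of $A^2 = A + I$ at position $(1,2)$ gives $A_{12}(A_{11}+A_{22}) = A_{12}$, whence $A_{12} = 1$ and $A_{11} + A_{22} = 1$; so up to interchanging the two basis elements $A = \left(\begin{smallmatrix} 0 & 1 \\ 1 & 1\end{smallmatrix}\right)$, which is exactly $M_J(\phi)$ for the standard module. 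This produces a basis-preserving isomorphism onto the standard module, as required. I expect the only genuinely delicate point to be the correct reading of the connectedness hypothesis: one should check that for $(\Z[\phi],\otimes)$ reachability through arbitrary products collapses to one-step adjacency — this follows from $\phi^n = F_{n-1}\Unit \oplus F_n\phi$ with $F_n$ the Fibonacci numbers — so that connectedness really does amount to completeness of $\Gamma_J(\phi)$; everything after that is bookkeeping with the quadratic relation and the finiteness of $J$.
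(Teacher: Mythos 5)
Your proof is correct, but it takes a genuinely different route from the paper's. The paper's argument is spectral: it notes that $M=M_J(\phi)$ is a symmetric positive-integer matrix with $M^2=M+1$, hence of norm $\phi<2$, determining a connected graph, and then invokes the classification of connected graphs of small norm \cite{GHJ89} to identify $M$ as the tadpole. You avoid both the norm estimate and the external classification: your key point is that, because $I=\{\Unit,\phi\}$ has only two elements, the paper's definition of connectedness (for every pair $b,c$ there exists $\alpha\in I$ with $N_{\alpha b}^c\neq 0$) forces \emph{every} off-diagonal entry of $A=M_J(\phi)$ to be at least $1$; combined with row-finiteness of a based module and the diagonal of $A^2=A+I$, this bounds $|J|\leq 2$ by pure counting, after which the $2\times 2$ integer matrix equation is solved by hand. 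Your closing remark is also the right one: the collapse of reachability to one-step adjacency (via $\phi^{\otimes n}=F_{n-1}\Unit\oplus F_n\phi$) guarantees that this strong reading of connectedness agrees with the usual indecomposability notion, so you are not proving a statement about a smaller class of modules. One tiny presentational point: $A_{12}=1$ comes from the diagonal relation $A_{12}^2=A_{11}+1-A_{11}^2=1$ (available once $A_{11}\in\{0,1\}$), not from the off-diagonal relation alone, which only yields $A_{11}+A_{22}=1$; since you have both equations in hand, nothing is actually missing. As for the trade-off: your argument is elementary and self-contained, but it exploits a rank-$2$ coincidence (connected implies complete graph) that does not persist for larger fusion rings, whereas the paper's norm-plus-classification method is the one that scales --- it is precisely the strategy reused for $A(1)$ via extended Dynkin diagrams in Proposition \ref{TheoA1}.
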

\begin{proof} Any connected fusion module for $(\Z[\phi],\otimes)$  is completely determined by $M=M_J(\phi)$, which is a symmetric, positive integer-valued matrix with norm $\phi$, since $M^2 = M+1$. As $M$ determines a connected graph, it follows by the classification of graphs with small norms (see e.g. \cite{GHJ89}) that necessarily (up to permutation)  $M$ is the tadpole $\begin{pmatrix} 0 & 1 \\1 & 1\end{pmatrix}$, that is, $M$ determines the standard module.   
\end{proof}

The above result shows that finite fusion rings can be non-trivial yet torsion-free. 

We now aim to show that the fusion rings $A(1)$ and $A(2)$ in Examples \ref{ExaA1} and \ref{ExaA2} are torsion-free. 

\begin{Prop}\label{TheoA1} The fusion ring $A(1)$ is torsion-free.
\end{Prop}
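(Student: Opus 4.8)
The plan is to reduce the whole problem to the single matrix $M = M_J(\mathbf 1)$ and then appeal to the classification of graphs of norm at most $2$, exactly in the spirit of the proof of Proposition \ref{PropTorFree}. Since $\mathbf 1$ generates $A(1)$ --- the fusion rules $\mathbf 1\otimes \mathbf n = \mathbf{n-1}\oplus \mathbf{n+1}$ (for $n\geq 1$, cf. Example \ref{ExaA1}) give the recursion $M_J(\mathbf{n+1}) = M\, M_J(\mathbf n)-M_J(\mathbf{n-1})$ with $M_J(\mathbf 0)=I$, $M_J(\mathbf 1)=M$, so that $M_J(\mathbf n)=U_n(M)$ for the Chebyshev polynomials $U_n$ of the second kind --- a connected based $A(1)$-module on $\Z_J$ is completely determined by $M$. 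As $\overline{\mathbf 1}=\mathbf 1$ we have $M=M^*$ with entries in $\N$, i.e. $M$ is the adjacency matrix of the connected graph $\Gamma_J(\mathbf 1)$, and the Schur test recalled before the main definition gives $\|M\|\leq d(\mathbf 1)=2$. A compatible dimension function moreover yields a strictly positive vector $D$ with $M D = 2D$ by \eqref{EqDimFor}; hence $2$ is the Perron eigenvalue and $\|M\|=2$ exactly.

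By the classification of connected graphs of norm $\leq 2$ \cite{GHJ89}, $\Gamma_J(\mathbf 1)$ is then one of the extended Dynkin diagrams $\tilde A_n,\tilde D_n,\tilde E_6,\tilde E_7,\tilde E_8$, or one of the infinite graphs $A_\infty$ (the half-line), $A_{\infty,\infty}$ (the two-sided line), or $D_\infty$. The half-line $A_\infty$ is precisely the standard module, because there the entries of $M$ are the $A(1)$-fusion multiplicities of $\mathbf 1\otimes \mathbf m = \mathbf{m-1}\oplus \mathbf{m+1}$. So the task reduces to eliminating every other candidate, and the only tool available for this is cofiniteness.

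The elimination runs through the return multiplicities $N_{\mathbf n b}^{b}=\langle M_J(\mathbf n)\delta_b,\delta_b\rangle$: cofiniteness (Remark \ref{RemFinMod}(1)) requires that for each $b$ this vanishes for all but finitely many $n$, i.e. that $\langle b,b\rangle$ be a finite sum. For a \emph{finite} graph this fails uniformly. Expanding $\delta_b$ in an orthonormal eigenbasis, the Perron eigenvalue $2$ contributes $(n+1)\,|\langle\delta_b,e_2\rangle|^2>0$ (the Perron vector has strictly positive entries), every eigenvalue in $(-2,2)$ contributes a bounded amount, and the eigenvalue $-2$, if present, contributes $(-1)^n(n+1)\,|\langle\delta_b,e_{-2}\rangle|^2$; since $U_n(2)=U_n(-2)=n+1$ for even $n$, along even $n$ one gets $N_{\mathbf n b}^{b}\to\infty$, contradicting cofiniteness. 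This kills all the extended Dynkin diagrams at once. For the two-sided line, writing $M=S+S^{-1}$ for the bilateral shift gives $M_J(\mathbf n)=\sum_{j\in\{-n,-n+2,\ldots,n\}}S^{j}$, so $N_{\mathbf n 0}^{0}=1$ for every even $n$, again contradicting cofiniteness. For $D_\infty$, the antisymmetric vector at the fork lies in $\ker M$ and contributes a nonvanishing term $U_n(0)=\pm1$ along even $n$, which the absolutely continuous symmetric part cannot cancel. Hence only $A_\infty$ survives, and the module is standard.

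The main obstacle is the cofiniteness elimination of the two \emph{borderline infinite} graphs $A_{\infty,\infty}$ and $D_\infty$: there $M$ has norm exactly $2$ with purely continuous spectrum, so no Perron blow-up is available as in the finite case, and one must instead control the Chebyshev coefficients $\int U_n\, d\mu_b$ of the spectral measure $\mu_b$ at a vertex and show that they fail to terminate (equivalently, that $\mu_b$ is not of the form $p(x)\,\sqrt{4-x^2}\,dx$ for a polynomial $p$, which is exactly the cofinite case and which singles out the half-line). By contrast, the finite-graph case is routine via the eigenvalue expansion above, and the identification of $A_\infty$ with the standard module is immediate.
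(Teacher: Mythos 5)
Your first half runs parallel to the paper's proof: reduce everything to $M = M_J(\mathbf{1})$, get $\|M\|=2$, invoke the classification of connected graphs of norm $2$, and identify $A_\infty$ with the standard module. The divergence, and the genuine gap, is in how you eliminate the remaining candidates. Your treatment of the finite extended Dynkin diagrams (Perron blow-up of $N_{\mathbf{n},b}^{b}$ along even $n$) and of $A_{\infty,\infty}$ (the bilateral shift computation) is correct. But the $D_\infty$ case is not proved, and the heuristic you offer for it is in fact false: the spectral measure of the symmetric component of $\delta_v$ at a fork vertex $v$ is one half of the \emph{arcsine} law $\frac{1}{\pi}(4-x^2)^{-1/2}dx$, whose Chebyshev coefficients $\int U_n\,d\mu$ do not decay at all --- they equal $1$ for every even $n$. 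So the absolutely continuous symmetric part \emph{does} cancel the kernel contribution $\tfrac12(-1)^{n/2}$, namely whenever $n\equiv 2 \pmod{4}$; it fails to cancel only when $n\equiv 0\pmod 4$, where $N_{\mathbf{n},v}^{v} = \tfrac12+\tfrac12 = 1$. The conclusion (non-cofiniteness of $D_\infty$) is true, but it needs this computation or a combinatorial induction, not a decay argument, which is unavailable precisely because the density blows up at $\pm 2$. Since you yourself flag this case as the unresolved ``main obstacle,'' the proof is incomplete as it stands. A second, smaller omission: since $N_{\mathbf{1},b}^{b}$ need not vanish a priori, $\Gamma_J(\mathbf{1})$ may have loops, and the loopless classification you quote does not cover graphs such as the half-line with a loop at its end; the paper explicitly works with ``extended Dynkin diagrams, possibly with loops,'' and these candidates must be excluded too.

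The paper closes all of these cases at once with one idea you are missing, and it removes every spectral computation: every connected norm-$2$ graph other than $A_\infty$ (finite or infinite, with or without loops) admits a \emph{uniformly bounded} Frobenius--Perron eigenvector at eigenvalue $2$, and this eigenvector is unique up to scalar. Since the compatible dimension function $d$ is a strictly positive eigenvector at eigenvalue $2$, it would then be uniformly bounded on $J$. But $d(b) = d(\langle b,b_0\rangle) = \sum_{n\geq 0}(n+1)N_{\mathbf{n},b}^{b_0}$, so boundedness of $d$ forces $N_{\mathbf{n},b}^{b_0}=0$ for all $b\in J$ once $n$ is large, making $\langle \Z_J,b_0\rangle$ a non-zero finite-rank submodule of $A(1)$ --- impossible, since tensoring with $\mathbf{1}$ always raises the top degree. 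This single argument replaces your three case-by-case eliminations (and the $D_\infty$ analysis you could not complete), and is the step to adopt if you want a full proof along these lines.
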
 

\begin{proof} Let $(\Z_J,\otimes)$ be a non-zero cofinite fusion $A(1)$-module with chosen compatible dimension function $d(b) = d(\langle b,b_0\rangle)$ with respect to some fixed $b_0\in J$. Write $M=M_J(\mathbf{1})$. Let $\Lambda$ be the unoriented graph associated with $M$. Then $\|\Lambda\|=\|M\|=2$, and $\Lambda$ is connected by connectedness of $(\Z_J,\otimes)$ and the fact that $\mathbf{1}$ is generating for $A(1)$. Hence $\Lambda$ is an extended Dynkin diagram, possibly with loops. For convenience, we refer to the list of these graphs in \cite[Appendix]{Tom08}. 

By immediate inspection, all extended Dynkin diagrams except for $A_{\infty}$ have a Frobenius-Perron eigenvector at eigenvalue 2 which is uniformly bounded. Hence, suppose that $\Lambda$ is not $A_{\infty}$. By unicity of the Frobenius-Perron eigenvector up to a scalar multiple, it follows that $d$ must be uniformly bounded on $J$. However, for $b\in J$ we have \[d(b) = d(\langle b,b_0\rangle) = \sum_{n\geq 0} N_{\mathbf{n},b}^{b_0}d(\mathbf{n}) = \sum_{n\geq 0} (n+1)N_{\mathbf{n},b}^{b_0}.\] Hence, we infer that there exists $m\geq 0$ such that $N_{\mathbf{n},b}^{b_0}=0$ for all $b\in J$ and all $n\geq m$. It follows that $M = \langle \Z_J,b_0\rangle$ is a non-trivial finite rank submodule of $A(1)$. Clearly, this is impossible.

It follows that $\Lambda$ is an $A_{\infty}$-graph, and it is then immediate to conclude that $(\Z_J,\otimes)$ is isomorphic to the standard based $A(1)$-module, since the action of $\mathbf{1}$ determines the complete module. 
 \end{proof}

\begin{Theorem}\label{TheoA2} The fusion ring $A(2)$ is torsion-free. 
\end{Theorem}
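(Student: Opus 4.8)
The plan is to deduce the statement from the already-established torsion-freeness of $A(1)$ (Proposition \ref{TheoA1}), by manufacturing an auxiliary cofinite $A(1)$-module out of the action of the fundamental element $\pi_+$. So let $(\Z_J,\otimes)$ be a non-zero connected cofinite fusion $A(2)$-module and fix a compatible dimension function $d$, normalised through the dimension function of $A(2)$ so that $d(\pi_+)=2$. Write $T=M_J(\pi_+)$, so that $M_J(\pi_-)=T^{*}$ and, by the Schur-test discussion preceding the main definition, $\|T\|\leq 2$, the dimension vector $D=(d(b))_b$ being a common eigenvector of $T$ and $T^{*}$ at the eigenvalue $2$.

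First I would form the bipartite double $\widetilde{J}=J\sqcup J$ and let the generator $\mathbf{1}$ of $A(1)$ act by the symmetric matrix $\widehat{T}$ on $\ell^{2}(\widetilde{J})$ whose diagonal blocks vanish and whose off-diagonal blocks are $T$ and $T^{*}$. The key point is that this extends to a genuine based $A(1)$-module. Writing $w_n$ for the alternating word $\pi_+\pi_-\pi_+\cdots$ of length $n$ (and $\overline{w_n}=\pi_-\pi_+\cdots$), the fusion rule of $A(2)$ gives $\pi_+\otimes \overline{w_n}=w_{n+1}\oplus w_{n-1}$ (prepending $\pi_+$, respectively cancelling the leading $\pi_-$), and symmetrically for $\pi_-$; that is, the alternating words obey exactly the Chebyshev fusion recursion of $A(1)$. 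Consequently, letting $\mathbf{n}$ act through $M_J(w_n)$ (placed in the off-diagonal, respectively diagonal, block according to the parity of $n$) realises the Chebyshev polynomials $U_n(\widehat{T}/2)$ as the matrices $M_J(w_n)\geq 0$, so all structure constants lie in $\N$ and the module axioms of Remark \ref{RemFinMod} hold. Cofiniteness of $\widetilde{J}$ over $A(1)$ then follows from cofiniteness of $J$ over $A(2)$: for fixed $b,c$ only finitely many words $\alpha$ satisfy $N_{\alpha b}^{c}\neq 0$, hence only finitely many alternating words do.

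Next I would decompose $\widetilde{J}$ into its connected components, each of which is a non-zero connected cofinite $A(1)$-module and hence, by Proposition \ref{TheoA1}, isomorphic to the standard $A(1)$-module. Translating back, every connected component of the graph $\Gamma_{\widetilde{J}}(\mathbf{1})$ of $\widehat{T}$ is an $A_{\infty}$-graph; equivalently, $T$, read as the biadjacency matrix of a bipartite graph between the two copies of $J$, is a disjoint union of simple half-infinite paths. In particular $\pi_+\otimes b$ and $\pi_-\otimes b$ have, for every $b\in J$, at most two constituents, each of multiplicity one.

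Finally I would reassemble this into an isomorphism with the standard module. As in the proof of Proposition \ref{TheoA1}, $d$ must be unbounded, for otherwise $\langle \Z_J,b_0\rangle$ would be a non-trivial finite-rank based submodule of $A(2)$; using cofiniteness to see that only finitely many elements have bounded dimension, one locates an element $\Omega$ of minimal dimension, and minimality forces $\Omega$ to occupy the endpoint of both its domain- and its codomain-path, so that $\pi_+\otimes\Omega$ and $\pi_-\otimes\Omega$ are irreducible. One then constructs a based isomorphism $F\to J$ sending $\Unit$ to $\Omega$ and each word to the element reached by the corresponding sequence of prepend/cancellation steps along the paths, verifying compatibility with the $A(2)$-fusion rules by means of connectedness. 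I expect this last step — folding the two copies $J\sqcup J$ back together consistently and checking the free generation — to be the main obstacle; the genuinely new idea is the reduction of the preceding paragraphs, whereby the non-self-adjoint generator $\pi_+$ is packaged, via alternating words, into a self-adjoint $A(1)$-action to which Proposition \ref{TheoA1} applies.
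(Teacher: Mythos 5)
Your first half coincides with the paper's own argument: the paper forms exactly the same bipartite double $\widetilde{J}=J\times\{-,+\}$, uses the same observation that alternating words in $\pi_+,\pi_-$ obey the $A(1)$ recursion (so that the matrix by which $\mathbf{n}$ must act equals the matrix of the alternating word of length $n$, hence has entries in $\N$, and vanishes for large $n$ by cofiniteness of the $A(2)$-module), and then invokes Proposition \ref{TheoA1} componentwise to conclude that the double is a disjoint union of $A_{\infty}$-graphs. Up to that point you have rediscovered the paper's reduction, and your argument is correct.

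The genuine gap is the step you yourself flag as ``the main obstacle'': it is not a routine reassembly, because the $A_{\infty}$ information is strictly weaker than what your final paragraph uses. The $A_{\infty}$ structure only bounds the in-degree and the out-degree of each vertex of $\Gamma(+)$ by $2$ \emph{separately}; it excludes neither loops (an entry $N^b_{\pi_+,b}\neq 0$ is just an edge between $b_+$ and $b_-$, which is a legitimate edge of an $A_{\infty}$ path), nor --- more seriously --- a vertex $b$ with two incoming \emph{and} two outgoing edges. That crossing configuration is compatible with the $A_{\infty}$ picture ($b_+$ and $b_-$ each have degree $2$) and with the dimension eigenvector, so no amount of massaging the $A(1)$-module will remove it; yet if it occurs, the unoriented graph underlying $\Gamma(+)$ is not a tree, your minimal element $\Omega$ does not generate freely, and the word map $F\to J$ is not well defined. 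The paper needs two additional cofiniteness arguments of a different flavour: loops are excluded because $\pi_+^{\otimes n}=\pi_+^n$ is a single irreducible word, so a loop at $b$ gives $N^b_{\pi_+^n,b}\neq 0$ for all $n$; and the crossing configuration is excluded because it forces $N^b_{\pi_+\pi_-,b}=N^b_{\pi_-\pi_+,b}=1$, while alternating tensor products of $\pi_+\pi_-$ and $\pi_-\pi_+$ multiply to single words, so $\langle b,b\rangle$ would have infinitely many constituents. Only after these exclusions does the dimension-increase induction along paths (the case analysis \eqref{EqDiag} in the paper) show that the graph is a tree on which the standard pattern propagates from the minimal vertex. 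So your reduction to $A(1)$ is indeed the paper's key idea, but what remains is a genuine second half of the proof requiring new arguments, not a verification, and your sketch of it would fail as stated.
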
 

\begin{proof}  Let $(\Z_J,\otimes,d)$ be a non-zero cofinite fusion $A(2)$-module with chosen compatible dimension function $d$, which we may assume integer-valued. Write $M(\pm)=M_J(\pi_{\pm})$, $\Gamma(\pm) = \Gamma_J(\pi_{\pm})$. It is enough to prove that $\Gamma(+) = \Gamma_{F}(\pi_+)$, the graph associated to $\pi_+$ for the standard module.

Since $\pi_+^{\otimes n}  = \pi_+^n$, it follows that $N_{\pi_+,b}^b\neq 0$ for some $b\in J$ implies $N_{\pi_+^n,b}^{b}\neq 0$ for all $n \in \N$. By cofiniteness of $(\Z_J,\otimes)$, it follows that $N_{\pi_+,b}^b=0$ for all $b\in J$, and $\Gamma(+)$ has no loops.

Write $\widetilde{J} = J\times \{-,+\}$, whose elements we will write as $b_{\pm}$.
Let $\widetilde{\Lambda}$ be the unoriented graph with vertex set $\widetilde{J}$  and $M(\pm)_{b,c}$ edges between $b_{\pm}$ and $c_{\mp}$, and no other edges.  Endow $\Z\lbrack \widetilde{J}\rbrack$ with the unique based $A(1)$-module structure such that $N_{\mathbf{1},c_{\mu}}^{d_{\nu}} = \widetilde{M}_{c_{\mu},d_{\nu}}$, with $\widetilde{M}$ the (self-adjoint) adjacency matrix of $\widetilde{\Lambda}$.

As $(M(\pi_{\mu_{n}})\ldots M(\pi_{\mu}) M(\pi_{-\mu})M(\pi_{\mu}))_{b,c}$ counts the number of paths of length $n$ from $c$ to $b$ in $\Gamma(\pi)$ which alternate orientation and start in the direction $\pi_{\mu}$, it follows that this number equals $(\widetilde{M}^n)_{b_{\mu_n},c_{\mu}}$. But by induction, one easily verifies by use of the fusion rules that there exist $P_k \in \Z$ such that \[\pi_{\mu_{n}}\ldots\pi_{\mu} \pi_{-\mu}\pi_{\mu} = \sum_{k=0}^n P_k \pi_{\mu_{k}}\otimes\ldots \otimes \pi_{\mu} \otimes \pi_{-\mu}\otimes \pi_{\mu},\quad \mathbf{n} = \sum_{k=0}^n P_k \mathbf{1}^{\otimes k}.\]
We conclude that \[N_{\mathbf{n},b_{\mu_n}}^{c_{\mu}} = N_{\pi_{\mu_n}\ldots \pi_{\mu}\pi_{-\mu}\pi_{\mu},b}^c,\qquad b,c\in J.\] In particular, $N_{\mathbf{n},b_{\mu_n}}^{c_{\mu}} =0$ for $n$ large. We deduce that $\Z\lbrack \widetilde{J}\rbrack$ is a cofinite based $A(1)$-module, and so $\widetilde{\Lambda}$ is a disjoint union of $A_{\infty}$-graphs. 

It follows from the above that $\Gamma(+)$ can not contain any double arrows $\rightrightarrows$ or $\leftrightarrows$ between two vertices, and that for any $b\in J$, there are at most 2 edges incoming and at most 2 edges outgoing from any vertex $b$. In fact, the situation \[\xymatrix{ & \ar[d]& \\ \ar[r] & b \ar[r] \ar[d]&\\ &&}\] can not appear either. Indeed, this would imply \[b \oplus (\pi \rho \otimes b) = \pi \otimes \rho \otimes b = 2b \oplus \ldots,\] and hence $N^b_{\pi\rho,b} = 1$. Also $N^b_{\rho\pi,b} = 1$. But  $\pi\rho^2\pi^2\rho^2\ldots = \pi\rho\otimes \rho\pi\otimes \pi\rho\otimes \ldots$, so $N^b_{\alpha,b}$ is nonzero for all alternating products $\alpha$ of $\pi\rho$ and $\rho\pi$, contradicting the cofiniteness condition. 

Consider now $\Lambda$, the unoriented graph underlying $\Gamma(+)$. (There is no ambiguity in this definition as $\Gamma(+)$ does not have loops or multiple edges.) We will show that $\Lambda$ is in fact a tree.

Let $b_0$ be a vertex of minimal dimension. Then in $\Gamma(+)$ there can not be two edges leaving or two edges arriving in $b_0$, as one of their endpoints would have dimension strictly smaller than $d(b_0)$. As there has to be at least one edge arriving and one edge leaving, by Lemma \ref{LemNoZero}, the situation around $b_0$ is as follows: $b \leftarrow b_0 \rightarrow c$, with $d(b) = d(c) = 2d(b_0)$. 

Let $b_0 \overset{e_1}{-} b_1\overset{e_2}{-}\ldots \overset{e_n}{-} b_n$ be a path in $\Lambda$. We claim that $d(b_i) < d(b_{i+1})$. We have already shown that $d(b_0) < d(b_1)$. Suppose then that $d(b_{i-1})< d(b_i)$ for $i\geq 1$. By this dimensional assumption, there must be another edge with vertex $b_i$ of the same orientation as $e_{i}$. As there also has to be an edge to $b_i$ of the opposite orientation, we must be in one of the following situations: 
\begin{equation}\label{EqDiag}\begin{array}{lll} \xymatrix{ & \ar[d]& \\ b_{i-1} & \ar[l] b_i \ar[r] & b_{i+1}}  && \xymatrix{ & & \\ b_{i-1} & \ar[l] \ar[u] b_i & \ar[l] b_{i+1}}\\
 \\ \xymatrix{ & & \\ b_{i-1} \ar[r]& b_i \ar[u]& \ar[l] b_{i+1}} & &\xymatrix{ & \ar[d]& \\ b_{i-1} \ar[r]& b_i \ar[r]&  b_{i+1}}\end{array}\end{equation} In each of these situations, $d(b_{i+1})>d(b_i)$. It now follows immediately that $\Lambda$ must be a tree. 

From the above, we can already conclude that $J \cong F$, and that $\Lambda$ coincides with the unoriented graph underlying $\Gamma_{F}(\pi_+)$. Let us show that in fact $\Gamma(+) = \Gamma_{F}(\pi_+)$. Indeed, let again $b_0$ be a vertex of minimal dimension, viewed as the root of the tree $\Gamma(+)$. Then from level 0 to level 1, we have the situation \[\xymatrix{ \ar[dr] && \\ & \ar[ur]&}\] By induction, it is clear from the only possible choices \eqref{EqDiag} that this pattern repeats itself from level $n$ to level $n+1$ at each vertex. This concludes the proof. 
\end{proof}

We now prove that free products of torsion-free fusion rings are torsion-free. 

\begin{Theorem}\label{TheoFreeProd} Let $A_s= (\Z_{I_s},\otimes)$ be torsion-free fusion rings for $s \in S$. Then their free product $A = *_{s \in S} A_s$ is again torsion-free.	
\end{Theorem}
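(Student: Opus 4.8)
The plan is to reconstruct an arbitrary nonzero connected cofinite based $A$-module $(\Z_J,\otimes)$ as the standard module by organizing $J$ into a tree whose vertices carry standard modules over the individual factors, in the spirit of Bass--Serre theory for free products of groups. First I would fix such a module together with a compatible dimension function. For each $s\in S$, the inclusion of $A_s$ as a fusion subring of $A$ (the single-letter words together with $\Unit$) makes $\Z_J$ a based $A_s$-module by restriction, and since $I_s\subseteq I$, cofiniteness over $A$ is inherited over $A_s$ (so that Lemma \ref{LemCof} applies on each piece). Decomposing $J$ into its connected $A_s$-components, each component is a nonzero connected cofinite based $A_s$-module, hence by torsion-freeness of $A_s$ it is isomorphic to the standard $A_s$-module; I would fix such a coordinatizing isomorphism on each component. (One should resist attaching a canonical basepoint to a component: when $A_s$ comes from a group every element satisfies $\langle b,b\rangle=\Unit$, so the ``unit'' is genuinely non-canonical there, exactly as Bass--Serre vertices are cosets without a marked point.)

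Next I would build a graph $\mathcal{T}$ with vertex set $J\sqcup\bigsqcup_{s}\{A_s\text{-components}\}$, joining $b\in J$ by an edge to the unique $A_s$-component containing it, for each $s$. Thus every $b$ has exactly one edge of each colour $s$. Connectedness of $\mathcal{T}$ follows from connectedness of $(\Z_J,\otimes)$ as an $A$-module: the factors generate $A$, and a single-letter move $\alpha\otimes(-)$ with $\alpha\in I_s$ keeps one inside a single $A_s$-component, i.e. it is a step along $\mathcal{T}$. The whole content is then the claim that $\mathcal{T}$ is a \emph{tree}, i.e. acyclic.

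The acyclicity is the main obstacle, and I expect it to be where cofiniteness and torsion-freeness of the factors must be combined. A cycle in $\mathcal{T}$ yields an element $b_0$ reached from itself by distinct reduced words: there are $b\in J$ and reduced $w_1\neq w_2\in I$ with $b\leq w_1\otimes b_0$ and $b\leq w_2\otimes b_0$, so that $b_0\leq \overline{w_1}\otimes w_2\otimes b_0$. Since $\tau(\overline{w_1}\otimes w_2)=\delta_{w_1,w_2}=0$, the expansion of $\overline{w_1}\otimes w_2$ contains a nontrivial reduced word $\rho$ with $b_0\leq \rho\otimes b_0$. If $\rho$ can be arranged to have its first and last syllables in different factors, then the free-product fusion rules give $\rho^{\otimes k}=\rho\cdots\rho$ reduced for all $k$, whence $b_0\leq \rho^{\otimes k}\otimes b_0$ and $N_{\rho^{\otimes k},b_0}^{b_0}\neq 0$ for infinitely many distinct reduced words, contradicting cofiniteness. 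The delicate point is the seam bookkeeping: producing from an arbitrary return word one whose self-concatenation does not partially collapse (the case where both extremal syllables lie in the same factor), which is precisely the phenomenon that the forbidden-configuration analysis controls in the proof of Theorem \ref{TheoA2}. I would expect to spend most of the effort here, possibly by tracking a syllable-length invariant along geodesics in $\mathcal{T}$ that must strictly increase away from $b_0$, thereby ruling out returns directly.

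Finally, once $\mathcal{T}$ is known to be a tree, I would fix a base element $b_0\in J$ and use the unique geodesic in $\mathcal{T}$ from $b_0$ to each $b$: it passes through an alternating sequence of colours, and reading off, via the fixed standard-module coordinates on each traversed component, the factor element effecting each step assigns to $b$ a reduced word in $I$. One then checks that $b\mapsto(\text{this word})$ is a bijection $J\to I$ that intertwines the inner products and the left actions of every $A_s$; since the $A_s$ generate $A$, this is an isomorphism of based $A$-modules onto the standard module, proving that $A$ is torsion-free.
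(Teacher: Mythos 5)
Your overall architecture --- restrict to each factor, use torsion-freeness of the $A_s$ to identify every connected $A_s$-component with the standard $A_s$-module, then globalize --- is also the skeleton of the paper's proof, and your graph $\mathcal{T}$ is a reasonable repackaging of what the paper does word by word. But the proposal has two genuine gaps, and they sit exactly where the paper does its real work.

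First, the acyclicity step. Your reduction of a cycle to ``some nontrivial reduced return word $\rho$ with $b_0\leq\rho\otimes b_0$'' discards too much information, and no amount of seam bookkeeping can recover it: nontrivial return words exist even in the \emph{standard} module. For instance over $A(1)*A(1)$, with $\mathbf{1},\mathbf{2}$ the spin-$\frac{1}{2}$ and spin-$1$ letters of the first factor, one has $N_{\mathbf{2},\mathbf{1}}^{\mathbf{1}}=1$ in the standard module, since $\mathbf{2}\otimes\mathbf{1}=\mathbf{1}\oplus\mathbf{3}$. So ``there is a nontrivial return word'' cannot by itself contradict cofiniteness, and a return word whose extremal syllables lie in one factor can in general \emph{never} be upgraded to a cyclically reduced one. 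What saves the claim is that the cycle itself hands you a cyclically reduced word, provided you extract it from the cycle rather than from $\overline{w_1}\otimes w_2$: along a non-backtracking cycle $b_0 - C_1 - b_1 - \cdots - C_n - b_n=b_0$, consecutive components carry different colours (each vertex of $J$ meets exactly one component per colour), and since $C_i$ is standard and $b_{i-1}\neq b_i$, every $\alpha\leq\beta_i\otimes\overline{\beta}_{i-1}$ satisfies $b_i\leq\alpha\otimes b_{i-1}$ and none of them is $\Unit$; choosing one such $\alpha_i\in I_{s_i}$ per step, the concatenation $\rho=\alpha_n\cdots\alpha_1$ is reduced with first and last syllables of different colours (because $C_1\neq C_n$ both contain $b_0$), whence $N_{\rho^{\otimes k},b_0}^{b_0}\geq1$ for all $k$ contradicts cofiniteness. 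So your claim is true, but the route you describe (and flag as unresolved) would fail.

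Second, and more seriously, the reconstruction step is not ``one then checks''; it is the heart of the theorem and is missing. ``The factor element effecting each step'' is ill-defined: if a geodesic enters a component at $b_{i-1}$ (coordinate $\beta_{i-1}$) and exits at $b_i$ (coordinate $\beta_i$), the admissible labels are \emph{all} $\alpha\leq\beta_i\otimes\overline{\beta}_{i-1}$, and this is a single letter only when $d_{s_i}(\beta_{i-1})=1$, i.e.\ only when the entry point sits at a dimension-one position of that component. For an arbitrary root this fails: root the standard module over $A(1)*A(1)$ at the letter $\mathbf{1}$; the step to $\mathbf{2}$ can be labelled by $\mathbf{1}$ or by $\mathbf{3}$, and indeed no basis-preserving module map sends $\mathbf{1}$ to $\Unit$. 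So you must (i) produce a root sitting at dimension-one positions in all of its components --- this is the paper's first claim, proved by a dimension descent in which $d$ drops by a factor of at least $\sqrt{2}$ each time one passes to the unit position of a bad component, and $d\geq1$ on $J$ --- and (ii) prove that this property propagates along geodesics from that root --- the paper's second claim, proved by playing two returns in different directions (one single letter $\beta$, one conjugated word $\gamma=\alpha_0\cdots\alpha_{k-1}\gamma_0\overline{\alpha}_{k-1}\cdots\overline{\alpha}_0$) against cofiniteness via their alternating products. Crucially, the tree cannot substitute for (i) and (ii): a return $b\leq\beta\otimes b$ is a loop at the vertex $b$, not an edge-path, so it creates no cycle, and tree-ness plus standard components does not rule out entry points at higher-dimensional positions. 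Your parenthetical advice to resist choosing basepoints therefore points the wrong way: locating the unit positions and propagating them is precisely the content your outline leaves unproven.
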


\begin{proof}
Let $B=(\Z_J,\otimes)$ be a non-zero connected cofinite based fusion module for $A$ with $d=\underset{s}{*}d_s$-compatible dimension function $d(b) = d(\langle b,b_0\rangle)$ for some $b_0\in J$. For each $b \in J$, let $B_s^b$ with basis $J_s^b$ be the connected cofinite based $A_s$-submodule of $B$ spanned by $b$. By assumption, $B^b_s \cong A_s$ as $A_s$-based module for any $b \in J$.

\textbf{Claim}: There exists $e\in J$ such that $\beta \otimes \alpha\otimes e \in J$ for each $\beta\in \sqcup_s I_s$ and each $\alpha\in I$ with $d(\alpha)=1$. 

Indeed, assume that $b\in J$ and $\beta\otimes b \notin J$ for some $\beta \in I_s$. Choose $c\in B^b_s$ such that $\gamma\otimes c$ is irreducible for each $\beta\in I_s$. With $\gamma$ such that $\gamma\otimes c = b$, we must necessarily have that $d_s(\gamma)>1$. In fact, since $d_s(\gamma)$ is bounded below by the norm of a matrix of positive integers, we must then have $d_s(\gamma) \geq \sqrt{2}$, and so $d(b) \geq \sqrt{2}d(c)$. Replacing $b$ by $\alpha\otimes c$ for $\alpha\in I$ with $d(\alpha)=1$, we can iterate this argument, which must stop at some point as $d(a)\geq 1$ for all $a\in J$. Hence there must exist $e$ as in the claim.

Fix now $e\in J$ satisfying the above property. Then by its definition, it satisfies the 0th step in the following induction argument.

\textbf{Claim}: For any reduced word $\alpha = \alpha_0 \alpha_1 \ldots \alpha_n$ where $\alpha_i \in I_{s(i)}$, we have $\alpha \otimes e \in J$. Moreover for any $s \neq s(0)$, we have an isomorphism $\varphi: B^{\alpha \otimes e}_s \to A_s$ of based $A_s$-modules such that $\varphi(\alpha \otimes e) = \Unit _s$. 
	
Indeed, take any $\alpha$ as above and assume the claim holds for $\alpha^k = \alpha_k \alpha_{k+1} \ldots \alpha_n$ for each $k \geq 1$. Thanks to the isomorphism $\varphi': B^{\alpha^1 \otimes e}_{s(0)} \to A_{s(0)}$ as above, we get $\varphi'(\alpha \otimes e) = \alpha_0\in J_{s(0)}$, and hence $\alpha\otimes e \in J_{s(0)}^{\alpha^{1}\otimes e}\subseteq J$.

To show that $\varphi: B^{\alpha \otimes e}_s \to A_s$ exists, note that there necessarily exists an isomorphism $\varphi_0: B^{\alpha \otimes e}_s \to A_s$ of based $A_s$-modules, by the first part of the proof. It hence suffices to show \[d_s(\varphi_0(\alpha \otimes e)) = 1.\] Assume not.	Then there exists $\Unit _s \neq \beta \in I_s$ such that $N^{\alpha \otimes e}_{\beta,\alpha \otimes e}$ is nonzero. Moreover, $d(\alpha) \neq 1$ by our assumption and the defining property of $e$. Pick a minimal $k$ such that $d_{s(k)}(\alpha_k) \neq 1$. Again we have $\Unit _{s(k)}\neq \gamma_0 \in A_{s(k)}$ such that $N^{\alpha^k \otimes e}_{\gamma_0,\alpha^k \otimes e}$ is nonzero, hence for any alternating product $\delta$ of $\beta$ and $\gamma := \alpha_0\alpha_1 \ldots \alpha_{k-1} \gamma_0 \overline{\alpha_{k-1}} \ldots \overline{\alpha_0}$, we have $N^{\alpha\otimes e}_{\delta,\alpha \otimes e}\neq0$, contradicting cofiniteness. Therefore we have proven the claim.

The claim shows that $\alpha \otimes e \in J$ for all $\alpha\in I$. We now show that these are mutually distinct. We only need to show $N^e_{\alpha,e}=0$ for any $\alpha \neq \Unit $. But assume $N^e_{\alpha,e}\neq0$. Since $\alpha \otimes e \in J$, we get $e = \alpha \otimes e$. In particular, $d(\alpha) = 1$. Hence $\alpha \in \Gamma = \underset{s}{*}\Gamma_s$ with  $\Gamma_s\subseteq I_s$ the group of elements in $I_s$ with dimension $1$. Since the $\Gamma_s$ are torsion-free by Proposition \ref{PropSubFus}, also $*_s\Gamma_s$ is torsion-free. As the $\Z_{\Gamma}$-submodule generated by $e$ is cofinite, we deduce that $\alpha =\Unit $.

It now  follows from the above that
 \[ A \to B : \alpha \mapsto \alpha \otimes e\] is a based module isomorphism, finishing the proof.\end{proof}

By contrast, torsion-freeness is in general not preserved by tensor products. Indeed, if $(\Z_I,\otimes)$ is a non-trivial torsion-free fusion ring with $I$ finite, then $\Z_I$ can be made into a non-standard (connected, cofinite) fusion module for $(\Z_{I}\odot\Z_I,\otimes)$ by \[(\alpha\odot \beta)\otimes \gamma = \alpha\otimes \gamma \otimes \overline{\beta}.\] However, this is in a sense the only thing which can go wrong.

\begin{Theorem}\label{TheoTenFus} Let $(\Z_{I_1},\otimes)$ and $(\Z_{I_2},\otimes)$ be torsion-free fusion rings, and assume $(\Z_{I_1}\odot \Z_{I_2},\otimes)$ is not torsion-free. Then $(\Z_{I_1},\otimes)$ and $(\Z_{I_2},\otimes)$ have non-trivial isomorphic finite fusion subrings. 
\end{Theorem}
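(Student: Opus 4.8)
The plan is to extract, from a hypothetical non-standard module over $\Z_{I_1}\odot \Z_{I_2}$, a pair of matching finite fusion subrings by comparing the two factorwise module structures at a vertex of minimal dimension. Suppose $\Z_{I_1}\odot \Z_{I_2}$ is not torsion-free and fix a non-zero connected cofinite based module $\Z_J$ that is not isomorphic to the standard one. The subrings $A_1 = \Z_{I_1}\odot \Unit_2$ and $A_2 = \Unit_1\odot \Z_{I_2}$ commute inside $\Z_{I_1}\odot \Z_{I_2}$ and are isomorphic to the respective factors; restricting the action makes $\Z_J$ a cofinite based $A_i$-module, which by torsion-freeness of $A_i$ is a direct sum of standard modules. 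I would fix $b_0\in J$ of minimal dimension. Then $b_0$ is a minimal vertex of both its $A_1$-component $P$ and its $A_2$-component $Q$, hence corresponds to an invertible (dimension-one) element under the standard-module identifications, so that $\phi_1(\alpha) = (\alpha\odot \Unit_2)\otimes b_0$ and $\phi_2(\beta) = (\Unit_1\odot \beta)\otimes b_0$ send basis elements to single basis elements, injectively, exhibiting $P\cong \Z_{I_1}$ and $Q\cong \Z_{I_2}$ as standard based modules.

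Next I would isolate the overlap $P\cap Q$. Set $B_1 = \{\alpha\in I_1 : \phi_1(\alpha)\in Q\}$ and $B_2 = \{\beta\in I_2 : \phi_2(\beta)\in P\}$, with a bijection $\psi : B_2 \to B_1$ determined by $\phi_2(\beta) = \phi_1(\psi(\beta))$ and fixing the basepoint. The crucial finiteness input comes from the inner product: using the two commuting actions together with $\langle x\otimes b_0, b_0\rangle = x\otimes \langle b_0,b_0\rangle$, a short manipulation shows that $\phi_1(\alpha) = \phi_2(\beta)$ forces $\alpha\odot \overline{\beta} \leq \langle b_0,b_0\rangle$. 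Distinct overlap elements yield distinct elements of the support of $\langle b_0,b_0\rangle$, and the latter is finite because the coefficients $N_{\mu,b_0}^{b_0}$ vanish for all but finitely many $\mu$ by cofiniteness. Hence $B_1,B_2$ are finite, and this analysis in fact identifies $\langle b_0,b_0\rangle = \oplus_{\alpha\in B_1}\overline{\alpha}\odot \psi^{-1}(\alpha)$.

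The heart of the argument is then the commuting-actions identity. For $\alpha,\alpha'\in B_1$ with $\beta = \psi^{-1}(\alpha)$ and $\beta' = \psi^{-1}(\alpha')$,
\[ \oplus_{\gamma} N_{\alpha\alpha'}^{\gamma}\,\phi_1(\gamma) \;=\; (\alpha\otimes \alpha'\odot \Unit_2)\otimes b_0 \;=\; (\Unit_1\odot \beta'\otimes \beta)\otimes b_0 \;=\; \oplus_{\delta} N_{\beta'\beta}^{\delta}\,\phi_2(\delta). \]
The left-hand side is supported in $P$ and the right-hand side in $Q$; comparing the two shows that every $\gamma$ occurring satisfies $\phi_1(\gamma)\in Q$, so $\gamma\in B_1$ and $\Z_{B_1}$ is closed under $\otimes$, while matching coefficients gives $N_{\alpha\alpha'}^{\gamma} = N_{\beta'\beta}^{\psi^{-1}(\gamma)}$. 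Thus $\psi^{-1}$ is an anti-isomorphism $\Z_{B_1}\to \Z_{B_2}$, and post-composing with the bar-involution yields a genuine fusion-ring isomorphism $\alpha\mapsto \overline{\psi^{-1}(\alpha)}$; that $\Unit_i\in B_i$ and $\overline{B_i} = B_i$ is read off from $\phi_i(\Unit_i) = b_0$ and $\langle b_0,b_0\rangle = \overline{\langle b_0,b_0\rangle}$. For non-triviality I would note that if $\langle b_0,b_0\rangle = \Unit$, then the module map $x\mapsto x\otimes b_0$ from the standard module would, by the same inner-product computation, take basis to basis bijectively and hence be an isomorphism, contradicting non-standardness; so $\langle b_0,b_0\rangle\neq \Unit$, which by the description above forces $B_1\neq \{\Unit_1\}$.

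I expect the main obstacle to be the passage in the third paragraph from the overlap bijection to an actual subring and isomorphism: verifying that the overlap is closed under fusion and that the structure constants transport correctly, all driven by the single commuting-actions identity. This has to be set up carefully in the first paragraph, where the real subtlety is choosing $b_0$ of minimal dimension precisely so that it corresponds to an invertible element within each factor-component, which is what guarantees that $\phi_1$ and $\phi_2$ land on single basis elements. The finiteness step, though short, is equally essential and is exactly the place where the cofiniteness hypothesis enters.
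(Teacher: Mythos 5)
Your overall strategy is essentially the paper's own: find a base point $b_0\in J$ whose two factorwise components are standard modules with basis-injective orbit maps $\phi_1,\phi_2$, pair elements of $I_1$ and $I_2$ through the support of $\langle b_0,b_0\rangle$, get finiteness from cofiniteness of the module, and get the (anti-)isomorphism from the commuting-actions identity. The packaging differs in two harmless ways: the paper works with the fusion subrings generated by a single constituent of $\langle e,e\rangle$ and identifies them via word identities $w(\alpha_2)\otimes e = w^{\mathrm{o}}(\alpha_1)\otimes e$, whereas you take the full overlaps $B_1,B_2$ and verify closure under fusion directly (your third-paragraph computation is correct, including the order reversal and the composition with the involution); and for non-triviality the paper invokes Proposition \ref{PropSubFus}, whereas you get it from injectivity of $\phi_1,\phi_2$ together with the exact description of $\langle b_0,b_0\rangle$, which also works.

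The one genuine gap is the opening move: ``fix $b_0\in J$ of minimal dimension.'' The set $J$ is in general infinite and the compatible dimension function is real-valued, so a global minimizer need not exist; nothing in the axioms makes the infimum attained. (Within a single $A_1$-component the minimum \emph{is} attained: a compatible dimension function on the standard module $\Z_{I_1}$ is a positive multiple of $d_1$, as one sees by evaluating compatibility at $\Unit_1$, so the component minimum sits at the vertex corresponding to $\Unit_1$. But your argument needs $b_0$ to be simultaneously minimal in \emph{both} of its components, and the infimum over the possibly infinitely many components need not be attained.) The repair is exactly what the paper does in place of minimality. Either choose $b_0$ with $d(b_0)<\sqrt{2}\,\inf_{b\in J}d(b)$, which exists since $d\geq 1$ on $J$: if $b_0$ corresponded to $\gamma_0\in I_1$ with $d_1(\gamma_0)>1$, then $d_1(\gamma_0)\geq \sqrt 2$ (because $\overline{\gamma_0}\otimes\gamma_0\geq \Unit_1\oplus\delta$ forces $d_1(\gamma_0)^2\geq 2$), so the vertex $c$ of its component corresponding to $\Unit_1$ would satisfy $d(c)=d(b_0)/d_1(\gamma_0)<\inf_J d$, a contradiction. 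Or run the iterative descent of Theorem \ref{TheoFreeProd}, which terminates because $d\geq 1$ and each step divides the dimension by at least $\sqrt 2$. With that one step repaired, your proof goes through.
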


\begin{proof} Let $(\Z_J,\otimes)$ be a non-zero, non-standard connected cofinite fusion module for $(\Z_I,\otimes) = (\Z_{I_1}\odot \Z_{I_2},\otimes)$. As in the proof of Theorem \ref{TheoFreeProd}, we can find $e\in J$ such that $\alpha\otimes e$ is irreducible for each $\alpha\in I_1\sqcup I_2$, and such that all $\alpha\otimes e$ are mutually distinct for $\alpha \in I_1$ (resp.~ $\alpha\in I_2$).

We claim that $\langle e,e\rangle \neq \Unit $. Indeed, by connectedness we have $\langle c,b\rangle \neq 0$ for each $c,b\in J$. Hence, if $\langle e,e\rangle$ were $\Unit $, then since $\alpha = \langle \alpha\otimes e,e\rangle$ for each $\alpha \in I$, it would follow that $\alpha\otimes e \in J$ for each $\alpha\in I$. Similarly, since $\langle \alpha_1\otimes e,\alpha_2\otimes e\rangle = \alpha_1\otimes \overline{\alpha}_2$, it would follow that the $\alpha\otimes e$ are distinct for distinct $\alpha\in I$. Hence $(\Z_J,\otimes)$ would be the standard $(\Z_I,\otimes)$-module, in contradiction with the assumption. 

There hence exist an $\alpha_1 \in I_1$ and an $\alpha_2\in I_2$, not both the unit element, such that $\langle e,e\rangle = (\alpha_1\otimes \overline{\alpha}_2) \oplus \ldots$, where we identify $(\Z_{I_1},\otimes)$ and $(\Z_{I_2},\otimes)$ as (commuting) fusion subrings of $(\Z_I,\otimes)$. Hence \[0\neq N_{\overline{\alpha}_1\otimes \alpha_2,e}^e = N_{\overline{\alpha}_1,\alpha_2\otimes e}^e  = N_{\alpha_1,e}^{\alpha_2\otimes e}  = N_{\Unit ,\alpha_1\otimes e}^{\alpha_2\otimes e},\] and we deduce $\alpha_1\otimes e = \alpha_2\otimes e$. Since  $\overline{\langle e,e\rangle} = \langle e,e\rangle$, we must have as well $\overline{\alpha}_1\otimes e = \overline{\alpha}_2\otimes e$. 

Let $(\Z_{I_i'},\otimes)$ be the fusion subring of $(\Z_{I_i},\otimes)$ generated by $\alpha_i$. For $w=w_1\ldots w_n$ a word in $\{+,-\}$, let $w(\alpha_i) = \alpha_i^{w_1}\otimes \ldots \otimes \alpha_i^{w_n}$, where $\alpha_i^+ = \alpha_i$ and $\alpha_i^- = \overline{\alpha}_i$. From the above, and from the fact that elements in $I_1$ and $I_2$ commute, we have that $w(\alpha_2)\otimes e = w^{\mathrm{o}}(\alpha_1)\otimes e$, where $w^{\mathrm{o}} = w_n\ldots w_1$. It follows that there exists an identification \[I_1' \cong I_2',\quad \beta_1\leftrightarrow \beta_2\] such that $\beta_1\otimes e = \beta_2\otimes e$. 

Now for each $\beta_1\in I_1'$, we have  \[N_{\overline{\beta}_1\otimes \beta_2,e}^{e} =N_{\overline{\beta}_1,\beta_2\otimes e}^e = N_{\overline{\beta}_1,\beta_1\otimes e}^e = N_{\beta_1,e}^{\beta_1\otimes e} = N_{\Unit ,\beta_1\otimes e}^{\beta_1\otimes e}=1.\]  By cofiniteness, we deduce that $I_1'$ and $I_2'$ are finite. 

It follows that the $(\Z_{I_i'},\otimes)$ are finite fusion subrings $(\Z_{I_i},\otimes)$. If $d(\alpha_1)=1$, then we must have $\alpha_1 = \Unit _1$ by Proposition \ref{PropSubFus}. Since $d(\alpha_1) = d(\alpha_2)$, and since either $\alpha_1$ or $\alpha_2$ is not a unit, it thus follows that neither of them is a unit. Hence both $(\Z_{I_i'},\otimes)$ are non-trivial finite fusion subrings. In fact, it is easily seen that $\beta_1 \mapsto \overline{\beta_2}$ is an isomorphism $(\Z_{I_1},\otimes) \rightarrow (\Z_{I_2},\otimes)$.
\end{proof}

Torsion-freeness does also not automatically pass to fusion subrings. Consider for example the fusion subring of $A(1)$ generated by $\mathbf{2}$ and the cofinite based module for it generated by $\mathbf{1}$ inside $A(1)$. We can say something more however under extra assumptions. The following definition is a straightforward generalisation of \cite[Definition 4.1]{VV11}, in its equivalent characterisation given by \cite[Lemma 4.2.d)]{VV11}. 

\begin{Def} Let $(\Z_I,\otimes)$ be a fusion ring with fusion subring $(\Z_{I'},\otimes)$. We call $(\Z_{I'},\otimes)$ a \emph{divisible} fusion subring of $(\Z_I,\otimes)$ if $(\Z_I,\otimes) \cong \oplus (\Z_{I'},\otimes)$ as based right $(\Z_{I'},\otimes)$-modules.
\end{Def} 
Using the involution, we see that we may replace right by left in the above definition. 
 
\begin{Prop}\label{PropDiv} A divisible fusion subring $(\Z_{I'},\otimes)$ of a torsion-free fusion ring $(\Z_I,\otimes)$ is again torsion-free. 
\end{Prop}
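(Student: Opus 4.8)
The plan is to \emph{induce} modules from $(\Z_{I'},\otimes)$ up to $(\Z_I,\otimes)$, exploit torsion-freeness of the latter, and then descend. Divisibility gives a decomposition $\Z_I\cong\oplus_{k\in K}\Z_{I'}$ of based right $\Z_{I'}$-modules; write $r_k\in I$ for the basis element corresponding to the unit of the $k$-th copy. Since $\Z_{I'}$ sits inside $\Z_I$ as a connected based right submodule, it must be one of the summands, so we may index so that $r_{k_0}=\Unit$. The structural consequence I will use is that each $(k,\beta):=r_k\otimes\beta$ (for $\beta\in I'$) is again a single basis element of $\Z_I$, and that $(k,\beta)\mapsto r_k\otimes\beta$ is a bijection $K\times I'\to I$.

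Given a non-zero connected cofinite based $(\Z_{I'},\otimes)$-module $(\Z_J,\otimes)$, I would form the induced module $\widetilde{\Z}:=\Z_I\otimes_{\Z_{I'}}\Z_J$, which has $\Z$-basis $K\times J$ (writing $(k,b):=r_k\otimes b$) and left $\Z_I$-action with structure constants $\widetilde N_{\alpha,(k,b)}^{(k',c)}=\sum_{\beta\in I'}N_{\alpha,r_k}^{(k',\beta)}N_{\beta,b}^{c}$, the first factor a fusion coefficient in $\Z_I$ and the second a module coefficient in $\Z_J$. The first task is to check that this is a cofinite based $\Z_I$-module, i.e.\ that the $\widetilde N$ satisfy the five conditions of Remark \ref{RemFinMod}. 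The unit axiom and the two finiteness axioms follow directly from the corresponding properties of $\Z_I$ and $\Z_J$ together with finiteness of fusion decompositions, while associativity is automatic since $\widetilde{\Z}$ is by construction an honest $\Z_I$-module.

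The main obstacle is the self-adjointness axiom $\widetilde N_{\alpha,(k,b)}^{(k',c)}=\widetilde N_{\overline\alpha,(k',c)}^{(k,b)}$. After using self-adjointness of the $\Z_J$-module to rewrite $N_{\beta,b}^c=N_{\overline\beta,c}^b$ and reindexing $\beta\mapsto\overline\beta$, this reduces to the identity $N_{\alpha,r_k}^{(k',\beta)}=N_{\overline\alpha,r_{k'}}^{(k,\overline\beta)}$ inside $\Z_I$. Writing both sides via $\tau$, using $(k',\beta)=r_{k'}\otimes\beta$ so that $\overline{(k',\beta)}=\overline\beta\otimes\overline{r_{k'}}$, and invoking the trace property $\tau(x\otimes y)=\tau(y\otimes x)$ together with $\tau(\overline x)=\tau(x)$, I expect both sides to become $\tau(\alpha\otimes r_k\otimes\overline\beta\otimes\overline{r_{k'}})$. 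This is exactly where divisibility is essential: it is what guarantees that $(k',\beta)$ is a single basis element, so that its conjugate factorises as above. Once all five conditions hold, $\widetilde{\Z}$ is a non-zero cofinite based $\Z_I$-module, hence by torsion-freeness of $\Z_I$ a direct sum of copies of the standard $\Z_I$-module.

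For the descent, I would note that $\{k_0\}\times J$ spans a based $\Z_{I'}$-submodule of $\mathrm{Res}_{\Z_{I'}}\widetilde{\Z}$ isomorphic to the original $(\Z_J,\otimes)$: since $N_{\gamma,\Unit}^{(k',\beta)}=\delta_{(k',\beta),\gamma}$ for $\gamma\in I'$, one gets $\gamma\otimes(k_0,b)=\oplus_c N_{\gamma b}^c(k_0,c)$. On the other hand, restricting each standard $\Z_I$-summand to $\Z_{I'}$ and applying divisibility in its left-module form shows that $\mathrm{Res}_{\Z_{I'}}\widetilde{\Z}$ is a direct sum of standard $\Z_{I'}$-modules, each of which is connected. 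A based submodule of such a direct sum is necessarily the direct sum of a subfamily of its connected components, and since $(\Z_J,\otimes)$ is connected it must coincide with a single standard summand. Hence $(\Z_J,\otimes)$ is standard, proving $(\Z_{I'},\otimes)$ torsion-free. I anticipate that the self-adjointness verification, and the accompanying bookkeeping with the coset representatives $r_k$, is the only genuinely delicate point; the remaining steps are formal.
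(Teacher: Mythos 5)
Your proof is correct and follows essentially the same route as the paper: both induce the given module up to $\Z_I\otimes_{\Z_{I'}}\Z_J$ along the divisibility decomposition, invoke torsion-freeness of $(\Z_I,\otimes)$, and then descend back to $(\Z_{I'},\otimes)$ using connectedness. The differences are only in bookkeeping: the paper certifies cofiniteness of the induced module by exhibiting the inner product $\langle\alpha\otimes b,\gamma\otimes d\rangle=\alpha\otimes\langle b,d\rangle\otimes\overline{\gamma}$ rather than checking the axioms of Remark \ref{RemFinMod} directly, and it performs the normalization to the unit at the end (deriving $\overline{\alpha_0}\otimes\alpha_0=\Unit$ from the isomorphism with the standard module) instead of fixing $r_{k_0}=\Unit$ at the outset as you do.
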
  
\begin{proof} Assume $(\Z_{J'},\otimes)$ is a non-zero cofinite connected fusion $(\Z_{I'},\otimes)$-module. Choose an identification $(\Z_I,\otimes) \cong \oplus (\Z_{I'},\otimes)$ of based right $\Z_{I'}$-modules, and let $I_0 = \{\alpha_0\}$ be the elements in $I$ corresponding to the units of the components $\Z_{I'}$. Then, by assumption, $\Z_J = Z_I\underset{\Z_{I'}}{\otimes} \Z_{J'}$ is a based $\Z_I$-module with basis $J = \{\alpha_0\otimes b\mid \alpha_0\in I_0,b\in J'\}$, and it is cofinite by the inner product \[\langle \alpha\otimes b,\gamma\otimes d\rangle = \alpha\otimes\langle b,d\rangle \otimes \overline{\gamma}.\] It is clearly also connected. Hence $(\Z_J,\otimes) \cong (\Z_I,\otimes)$ as a based $(\Z_I,\otimes)$-module.

Let $\alpha_0\otimes b_0$ be the element corresponding to $\Unit \in \Z_I$. Then $\alpha\otimes \alpha_0\otimes b_0 \in J$ for each $\alpha\in I$. It follows in particular that $\alpha\otimes \alpha_0\in I$ for each $\alpha\in I$, and so $\overline{\alpha_0} \otimes \alpha_0 = \Unit $. We may thus assume $\alpha_0 = \Unit $, and we find $\Z_{J'} \cong \Unit \underset{\Z_{I'}}{\otimes} \Z_{J'} \cong \Z_{I'}$ as based $(\Z_{I'},\otimes)$-modules.
\end{proof} 

\begin{Rem} It follows from \cite[Proposition 4.3]{VV11} that $A(2)$ is a divisible fusion subring of $A(1)*\Z_{\Z}$, so we obtain an alternative proof of Theorem \ref{TheoA2} by combining Theorem \ref{TheoFreeProd}, Proposition \ref{TheoA1}, Proposition \ref{PropGroup} and Proposition \ref{PropDiv}.

\end{Rem} 

\section{Strong torsion-freeness for discrete quantum groups}

Let $\Gam$ be a discrete quantum group, by which we will, for the sake of convenience, understand a Hopf $^*$-algebra $(\C[\Gam],\Delta)$ with invariant (positive) state $\varphi:\C[\Gam] \rightarrow\C$. In particular, $(\C[\Gam],\Delta)$ is a cosemisimple Hopf algebra. Endow $\C[\Gam]$ with the pre-Hilbert structure $\langle x,y\rangle = \varphi(x^*y)$. 

By \emph{corepresentation} $X$ of $(\C[\Gam],\Delta)$ we will always understand \emph{finite-dimensio-nal} corepresentation, that is, a finite-dimensional vector space $V$ together with a linear map $\delta: V \rightarrow V\otimes \C[\Gam]$ such that \[(\id\otimes \Delta)\delta = (\delta\otimes \id)\delta, \quad (\id\otimes \varepsilon)\delta = \id.\] We call a corepresentation $V=(V,\delta)$ \emph{unitary} if $V$ is equipped with a Hilbert space structure for which $\delta$ is \emph{isometric}, that is \[\delta(\xi)^*\delta(\eta) = \langle \xi,\eta\rangle 1 \in \C\lbrack \Gam\rbrack\] for all $\xi,\eta\in V$, where we write $\xi^*\eta = \langle \xi,\eta\rangle$. 

The unitary corepresentations form a rigid tensor C$^*$-category \cite{LR97,NT13} with unit the trivial corepresentation $\Unit = \C$. We will write the morphism spaces of $\Gam$-equivariant linear maps as $\Mor^{\Gam}(V,W)$.

\begin{Def}\label{DefFusQG} Let $\Gam$ be a discrete quantum group. We associate to $\Gam$ the fusion ring $\Fus(\Gam)$ with basis the set $I= \{[V]\}$ of equivalence classes of irreducible (unitary) corepresentations, with unit $\Unit  =[\Unit ]$ and involution $\overline{[V]} =[\overline{V}]$, where $\overline{V}$ is the dual of $V$, and fusion rules and dimension function \[N_{[V],[W]}^{[Z]} = \dim(\Mor^{\Gam}(Z,V\otimes W)),\qquad d([V]) = \mathrm{dim}(V).\] 
\end{Def}

See for example \cite{Ban99}, \cite[Section 2.7]{NT13} for more information. 

If $(A,\alpha)$ is a corepresentation which also has the structure of a C$^*$-algebra for which $\alpha$ is a unital $^*$-homomorphism, we call $(A,\alpha)$ a \emph{coaction} of $(\C[\Gam],\Delta)$. The following lemma is closely related to the discussion on $\delta$-forms in \cite{Ban02}.

\begin{Lem}\label{LemQ} If $(A,\alpha)$ is a coaction, there exists a positive functional $\varphi_A$ on $A$ such that $\langle a,b\rangle = \varphi_A(a^*b)$ turns $(A,\alpha)$ into a unitary corepresentation for which the multiplication map \[m: A\otimes A \rightarrow A,\quad a\otimes b\mapsto ab\] is a coisometry.
\end{Lem}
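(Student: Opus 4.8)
The plan is to reduce both requirements on $\varphi_A$ to transparent algebraic conditions, then to produce $\varphi_A$ by averaging against the Haar state $\varphi$, and finally to fix the normalisation making $m$ a coisometry by a central rescaling.

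First I would record two reductions. Writing $\alpha(a)=\sum a_{(0)}\otimes a_{(1)}$ and $\langle a,b\rangle=\varphi_A(a^*b)$, the isometry condition $\alpha(a)^*\alpha(b)=\langle a,b\rangle 1$ unwinds to $\sum\varphi_A(a_{(0)}^*b_{(0)})\,a_{(1)}^*b_{(1)}=\varphi_A(a^*b)1$. Since $\alpha$ is a $^*$-homomorphism, $\alpha(a^*b)=\sum a_{(0)}^*b_{(0)}\otimes a_{(1)}^*b_{(1)}$, so the left-hand side is precisely $(\varphi_A\otimes\id)\alpha(a^*b)$. Hence \emph{$\alpha$ is isometric if and only if $\varphi_A$ is invariant}, i.e.\ $(\varphi_A\otimes\id)\alpha=\varphi_A(\cdot)1$. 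For the coisometry, fix an orthonormal basis $\{f_i\}$ of $A$ for $\langle\cdot,\cdot\rangle$. A direct computation of the adjoint gives $m^*(c)=\sum_{i,j}\varphi_A(f_j^*f_i^*c)\,f_i\otimes f_j$, and resolving the inner sum by the basis yields $mm^*(c)=\big(\sum_i f_if_i^*\big)c$. Thus \emph{$m$ is a coisometry if and only if $W:=\sum_i f_if_i^*=1$}. Since $W$ is central and positive, writing $\varphi_A=\mathrm{Tr}(\rho\,\cdot)$ on each matrix block $M_{n_k}$ of the finite-dimensional C$^*$-algebra $A$, this condition reads $\mathrm{Tr}(\rho_k^{-1})=1$ for every $k$.

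Next I would construct an invariant functional. Choosing any faithful state $\omega_0$ on $A$ and setting $\varphi_A:=(\omega_0\otimes\varphi)\alpha=\omega_0\circ E$, where $E=(\id\otimes\varphi)\alpha$ is the conditional expectation onto the fixed-point algebra $A^{\Gam}$, left-invariance of $\varphi$ together with coassociativity shows that $\varphi_A$ is invariant, positivity of $\varphi$ and the homomorphism property of $\alpha$ give positivity, and faithfulness of $\varphi$ and $\omega_0$ give faithfulness. By the first paragraph, $\langle a,b\rangle=\varphi_A(a^*b)$ already turns $(A,\alpha)$ into a unitary corepresentation.

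It remains to arrange $W=1$, which I expect to be the main obstacle. Here I would exploit the remaining freedom: multiplying an invariant $\varphi_A$ by a positive central fixed element $z\in Z(A)\cap A^{\Gam}$ preserves invariance (as $z\in A^{\Gam}$ is central, $\varphi_A(z\cdot)$ is again of the form $(\text{state on }A^{\Gam})\circ E$) while rescaling $W$ blockwise, replacing $\mathrm{Tr}(\rho_k^{-1})$ by $\mu_k^{-1}\mathrm{Tr}(\rho_k^{-1})$ on the block where $z$ has eigenvalue $\mu_k$. When the coaction fixes every minimal central projection of $A$ one may then scale each block independently to $\mathrm{Tr}(\rho_k^{-1})=1$; in general only orbit-sums of central projections lie in $Z(A)\cap A^{\Gam}$, so the key point to verify is that $W$ is \emph{constant along each orbit} into which the coaction groups the central blocks. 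This I would establish from the equivariant identifications between blocks in a common orbit supplied by the corepresentation (modular) structure, which relate the corresponding densities $\rho_k$ and hence force the numbers $\mathrm{Tr}(\rho_k^{-1})$ to agree. Granting this orbit-constancy, a single positive central fixed element normalises all orbits at once, yielding $W=1$; one could alternatively phrase solvability as a fixed-point statement on the convex set of invariant states. Confirming that the normalisation can be met \emph{within} the invariant functionals, rather than conflicting with invariance, is precisely the crux of the argument, and is where the rigidity of the corepresentation category of $\Gam$ enters.
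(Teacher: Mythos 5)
Your overall skeleton matches the paper's: average a faithful state against the Haar state to get an invariant faithful functional $\varphi_A'$ (so that $(A,\alpha)$ becomes a unitary corepresentation), observe that $mm^*$ is left multiplication by the positive, invertible, central element $W=\sum_i f_if_i^*$ (blockwise the scalar $\mathrm{Tr}(\rho_k^{-1})$), and then rescale $\varphi_A'$ by a positive invertible element of $Z(A)\cap A^{\alpha}$ to force $W=1$. But there is a genuine gap exactly at the point you yourself call the crux: you never prove that a suitable rescaling element exists, i.e.\ that $W$ is ``constant along orbits''. The mechanism you gesture at (``equivariant identifications between blocks in a common orbit supplied by the corepresentation (modular) structure'') is not an argument, and the surrounding framework is itself unjustified in the quantum setting: a coaction of $(\C[\Gam],\Delta)$ need not induce a permutation action on the minimal central projections of $A$, so it is unclear what ``orbits'' should mean, and the claim that $Z(A)\cap A^{\alpha}$ is spanned by orbit sums is unsupported. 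As written, the proof does not close.

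The missing step has a one-line proof, and it is precisely how the paper concludes. Since $\alpha$ is multiplicative, $m\in\Mor^{\Gam}(A\otimes A,A)$; since $A$ (with the $\varphi_A'$-inner product) and $A\otimes A$ are \emph{unitary} corepresentations, the adjoint $m^*$ is again equivariant, hence so is $mm^*=L_W$. Evaluating equivariance at $1\in A$ gives $\alpha(W)=(L_W\otimes\id)\alpha(1)=W\otimes 1$. Thus $W$ itself lies in $Z(A)\cap A^{\alpha}$ (its centrality, positivity and invertibility you already have from your block computation; the paper instead quotes Watatani's index theory, its $T$ being your $W$), and no orbit analysis is needed: $\varphi_A:=\varphi_A'(W\,\cdot\,)$ is still positive, faithful and invariant, its block densities are $\rho_k\,\mathrm{Tr}(\rho_k^{-1})$, and these have index $1$, so $m$ becomes a coisometry. (In your notation this is $z=W$, $\mu_k=\mathrm{Tr}(\rho_k^{-1})$; the paper writes the rescaling with $T^{-1}$, but the direction of the rescaling is immaterial to the structure of the argument, the essential point being that $T\in Z(A)\cap A^{\alpha}$.) Note finally that rigidity of $\Corep(\Gam)$ is not the relevant input here: all that is used is that morphism spaces between unitary corepresentations are closed under taking adjoints.
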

\begin{proof} Let $\omega$ be a positive faithful state on $A$, and let \[\varphi_A'(a) = (\omega \otimes \varphi)\alpha(a).\] From the invariance of $\varphi$, one deduces that $\varphi_A'$ is invariant, that is, \[(\varphi_A'\otimes \id)(\alpha(a)) = \varphi_A'(a)1 \in \C\lbrack \Gam\rbrack.\] Moreover, as we chose $\omega$ faithful, and as $\varphi$ is faithful, also $\varphi_A'$ is faithful. It is then easy to see that $\langle a,b\rangle = \varphi_A'(a^*b)$ turns $(A,\alpha)$ into a unitary corepresentation.  

There now exists a positive, invertible, central element $T \in A$ such that, with respect to the above scalar product, $mm^*(a) = Ta$, namely the \emph{index} of $\varphi_A': A \rightarrow \C$ \cite[Definition 1.2.2., Proposition 1.2.8]{Wat90}. As by definition $m \in \Mor^{\Gam}(A\otimes A,A)$, it follows that $\alpha(T) = T\otimes 1$. Hence $\varphi_A(a) = \varphi_A'(T^{-1}a)$ satisfies the requirements of the lemma. 
\end{proof} 

We will in the following always consider a coaction with an invariant faithful positive functional as above. 

For example, if $(V,\delta)$ is a unitary corepresentation, then $B(V)$ carries the \emph{adjoint coaction} \[B(V) \rightarrow B(V)\otimes \C[\Gam], \quad \xi\eta^* \mapsto \delta(\xi)\delta(\eta)^*.\] 

\begin{Def}[\cite{BS89}] Let $(A,\alpha)$ be a coaction of $(\C[\Gam],\Delta)$. An \emph{equivariant (right) Hilbert $A$-module} is a corepresentation $(\mathscr{E},\delta)$ where $\mathscr{E}$ is equipped with a right Hilbert $A$-module structure for which \[\delta(\xi a) = \delta(\xi)\alpha(a),\quad \langle \delta(\xi),\delta(\eta)\rangle_{A\otimes \C\lbrack \Gam\rbrack} = \alpha(\langle \xi,\eta\rangle_{A}).\] 
\end{Def}

One can turn $(\mathscr{E},\delta)$ into a unitary corepresentation by the inner product $\langle \xi,\eta\rangle = \varphi_A(\langle \xi,\eta\rangle_A)$. Since any Hilbert $A$-module is a direct summand of some $\C^n \otimes A$, it follows that the module maps $\mathscr{E}\otimes A \rightarrow \mathscr{E}$ are coisometries. Moreover, since the space of equivariant $A$-module maps is closed under the adjoint operation, any equivariant Hilbert module decomposes as a direct sum of irreducible equivariant Hilbert modules. 

If $V$ is a unitary corepresentation of $\Gam$, then $V\otimes \mathscr{E}$ is again an equivariant Hilbert $A$-module by the tensor product corepresentation and the $A$-module action only on $\mathscr{E}$. We hence obtain from $(A,\alpha)$ a fusion module for $\Fus(\Gam)$ in the following way. 

\begin{Def} Let $J$ be the set of equivalence classes of irreducible equivariant Hilbert $A$-modules. We define $\Fus_A(\Gam)$ to be the based $\Fus(\Gam)$-module with structure coefficients \[ N_{[V],[\mathscr{E}]}^{[\mathscr{F}]} = \mathrm{dim}(\Mor_A^{\Gam}(\mathscr{F},V\otimes \mathscr{E})).\] 
\end{Def} 

\begin{Lem} The fusion module $\Fus_A(\Gam)$ is cofinite.
\end{Lem}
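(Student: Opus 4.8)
The plan is to reduce the statement to the single finiteness condition (1) of Remark \ref{RemFinMod}. Recall that $\Fus_A(\Gam)$ is already built as a based $\Fus(\Gam)$-module, so conditions (2)--(5) of Remark \ref{RemFinMod} (finiteness of the decomposition of a single module $V\otimes\mathscr E$, together with Schur's lemma, Frobenius reciprocity and associativity of the tensor action) are in place; cofiniteness is precisely the extra requirement that for fixed irreducible equivariant Hilbert $A$-modules $\mathscr E,\mathscr F$ one has $N_{[V],[\mathscr E]}^{[\mathscr F]}=\dim(\Mor_A^{\Gam}(\mathscr F,V\otimes\mathscr E))=0$ for all but finitely many irreducible corepresentations $V$. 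Equivalently, it asks that the inner product $\langle[\mathscr E],[\mathscr F]\rangle=\oplus_{[V]}N_{[V],[\mathscr E]}^{[\mathscr F]}\,\overline{[V]}$ actually lands in $\Z_I$, i.e. has finite support.

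The key observation I would use is that, by our standing convention, every corepresentation --- and hence every equivariant Hilbert $A$-module --- is finite-dimensional, so $\mathscr E$ and $\mathscr F$ are finite-dimensional corepresentations of $\Gam$. In particular any equivariant $A$-module map is a fortiori a $\Gam$-equivariant linear map, giving the inclusion $\Mor_A^{\Gam}(\mathscr F,V\otimes\mathscr E)\subseteq\Mor^{\Gam}(\mathscr F,V\otimes\mathscr E)$. The right-hand space is a morphism space between finite-dimensional objects of the rigid tensor C$^*$-category $\Corep(\Gam)$, so by Frobenius reciprocity it is isomorphic to $\Mor^{\Gam}(V,\mathscr F\otimes\overline{\mathscr E})$.

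Now $\mathscr F\otimes\overline{\mathscr E}$ is a single, fixed, finite-dimensional corepresentation, hence a direct sum of finitely many irreducibles; thus $\Mor^{\Gam}(V,\mathscr F\otimes\overline{\mathscr E})$, and a fortiori $\Mor_A^{\Gam}(\mathscr F,V\otimes\mathscr E)$, vanishes for every $[V]\in I$ that is not one of these finitely many constituents. This is exactly condition (1), so $\Fus_A(\Gam)$ is cofinite. (Alternatively, after decomposing $\Fus_A(\Gam)$ into connected components one could feed the pair $\mathscr E=\mathscr F$ into Lemma \ref{LemCof}, but the argument above establishes the finiteness directly.) I do not expect a genuine obstacle here: the only thing to be careful about is to recognise that finite-dimensionality of the modules is what powers everything, so that cofiniteness of $\Fus_A(\Gam)$ becomes a routine consequence of Frobenius reciprocity in $\Corep(\Gam)$ rather than requiring any analytic input from the coaction $(A,\alpha)$ itself.
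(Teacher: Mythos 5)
Your argument is correct, and it is a close cousin of the paper's proof rather than a copy. The paper introduces the internal hom: it observes that the space of $A$-intertwiners $\mathcal{L}_A(\mathscr{E},\mathscr{F})\subseteq B(\mathscr{E},\mathscr{F})$ is itself a finite-dimensional unitary corepresentation, takes $\langle[\mathscr{E}],[\mathscr{F}]\rangle$ to be its decomposition into irreducibles (finitely supported precisely because this space is finite-dimensional), and verifies compatibility with the module structure via the identification $\Mor^{\Gam}(V,\mathcal{L}_A(\mathscr{E},\mathscr{F}))\cong\Mor_A^{\Gam}(V\otimes\mathscr{E},\mathscr{F})$. You instead drop the $A$-linearity, embed $\Mor_A^{\Gam}(\mathscr{F},V\otimes\mathscr{E})$ into $\Mor^{\Gam}(\mathscr{F},V\otimes\mathscr{E})$, and use Frobenius reciprocity in $\Corep(\Gam)$ to bound $N_{[V],[\mathscr{E}]}^{[\mathscr{F}]}$ by the multiplicity of $V$ in the fixed finite-dimensional corepresentation $\mathscr{F}\otimes\overline{\mathscr{E}}$. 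Since $\mathcal{L}_A(\mathscr{E},\mathscr{F})$ sits inside $B(\mathscr{E},\mathscr{F})\cong\mathscr{F}\otimes\overline{\mathscr{E}}$, your estimate is exactly the ambient-space version of the paper's equality: you obtain an inequality where the paper computes the multiplicity on the nose, but an inequality is all that cofiniteness requires. The trade-off is that the paper's formulation produces the $\Z_I$-valued inner product explicitly, which is literally the datum demanded by the definition of a cofinite based module, whereas you recover it indirectly through Remark \ref{RemFinMod} after checking its finiteness condition (1); this is legitimate, since the paper's definition of $\Fus_A(\Gam)$ already asserts the based-module structure, i.e. conditions (2)--(5).
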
 
\begin{proof} If $\mathscr{E}$ and $\mathscr{F}$ are equivariant Hilbert $A$-modules, then the space of $A$-intertwiners $\mathcal{L}_A(\mathscr{E},\mathscr{F}) \subseteq B(\mathscr{E},\mathscr{F})$ is a unitary corepresentation. This gives rise to the inner product \[\langle [\mathscr{E}],[\mathscr{F}]\rangle = \sum_{[V]\in I}  \mathrm{dim}(\Mor^{\Gam}(V,\mathcal{L}_A(\mathscr{E},\mathscr{F}))[V],\] which is easily seen to be compatible with the $\Fus(\Gam)$-module structure since \[\Mor^{\Gam}(V,\mathcal{L}_A(\mathscr{E},\mathscr{F})) \cong \Mor^{\Gam}_A(V\otimes \mathscr{E},\mathscr{F}).\] 
\end{proof}

\begin{Def}[\cite{Mey08}] A discrete quantum group $\Gam$ is called \emph{torsion-free} if any (finite-dimensional)  coaction is equivariantly Morita equivalent to a direct sum of trivial coactions. 
\end{Def}

More directly, this says a discrete quantum group is torsion-free if and only if any coaction on a finite-dimensional C$^*$-algebra is isomorphic to a direct sum of adjoint coactions. 

\begin{Def}
A discrete quantum group $\Gam$ is called \emph{strongly} torsion-free if $\Fus(\Gam)$ is torsion-free.
\end{Def}

\begin{Theorem}\label{TheoTorFree} A strongly torsion-free discrete quantum group is torsion-free.
\end{Theorem}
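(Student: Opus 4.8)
The plan is to run a finite-dimensional coaction through the cofinite fusion module machinery and apply torsion-freeness of $\Fus(\Gam)$. So let $(A,\alpha)$ be a coaction on a finite-dimensional C$^*$-algebra, and form its associated fusion module $\Fus_A(\Gam)$, which is cofinite by the lemma above. First I would decompose $\Fus_A(\Gam)$ into a direct sum $\oplus_k B_k$ of connected cofinite fusion modules, as is always possible, and discard the components that are zero. Since $\Gam$ is strongly torsion-free, each non-zero $B_k$ is isomorphic to the standard $\Fus(\Gam)$-module. In particular every component $B_k$ carries a distinguished basis element $e_k$, the image of $\Unit$, for which $\langle e_k,e_k\rangle = \Unit$, and its other basis elements are exactly the classes $V\otimes e_k$ with $[V]\in I$ irreducible, mutually inequivalent and patterned like the irreducibles of $\Gam$ themselves.

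Next I would translate this combinatorial information back into equivariant Hilbert modules. Let $\mathscr{E}_k$ be an irreducible equivariant Hilbert $A$-module representing $e_k$. By the formula for the inner product on $\Fus_A(\Gam)$, the identity $\langle e_k,e_k\rangle=\Unit$ says exactly that the corepresentation $\mathcal{L}_A(\mathscr{E}_k,\mathscr{E}_k)$ contains the trivial corepresentation with multiplicity one and contains no other irreducible, i.e.\ $\mathcal{L}_A(\mathscr{E}_k,\mathscr{E}_k)\cong \C$ as a corepresentation, with trivial coaction. Hence $\mathscr{E}_k$, being full over the invariant ideal $A_k := \langle \mathscr{E}_k,\mathscr{E}_k\rangle_A \subseteq A$ by construction, implements an equivariant Morita equivalence between the trivial coaction on $\C$ and $(A_k,\alpha_{\mid A_k})$.

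It then remains to glue these equivalences into a single one. I would first show the ideals $A_k$ are complementary: if $\mathscr{E}_k,\mathscr{E}_l$ lie in distinct components, then $\Mor_A^{\Gam}(V\otimes\mathscr{E}_k,\mathscr{E}_l)=0$ for every $V$, so $\mathcal{L}_A(\mathscr{E}_k,\mathscr{E}_l)=0$ as a corepresentation and there are no nonzero $A$-module maps $\mathscr{E}_k\to\mathscr{E}_l$, forcing $A_k \perp A_l$. For exhaustion, regard $A$ itself as an equivariant right Hilbert $A$-module and decompose it into irreducibles $\oplus_j \mathscr{F}_j$; each $\mathscr{F}_j$ is isomorphic to $V_j\otimes \mathscr{E}_{k(j)}$ for an irreducible $V_j$, and since $\langle V\otimes\mathscr{E}_k,V\otimes\mathscr{E}_k\rangle_A = \langle\mathscr{E}_k,\mathscr{E}_k\rangle_A = A_k$ while $A$ is full over itself, I obtain $A = \sum_j \langle\mathscr{F}_j,\mathscr{F}_j\rangle_A = \oplus_k A_k$. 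Consequently $\mathscr{E}:=\oplus_k \mathscr{E}_k$ is a full equivariant Hilbert $A$-module with $\mathcal{L}_A(\mathscr{E},\mathscr{E})\cong \C^{N}$ ($N$ the number of components) carrying the trivial coaction, exhibiting $(A,\alpha)$ as equivariantly Morita equivalent to a direct sum of trivial coactions, which is the required conclusion.

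The main obstacle is the translation step: one must verify carefully that the purely combinatorial identity $\langle e_k,e_k\rangle=\Unit$ genuinely upgrades to the operator-algebraic statement that $\mathscr{E}_k$ is an equivariant imprimitivity bimodule between $(\C,\mathrm{triv})$ and $(A_k,\alpha_{\mid A_k})$, and that orthogonality together with fullness of the $A_k$ permits assembling the components into one Morita equivalence. The remaining inputs --- cofiniteness of $\Fus_A(\Gam)$, the decomposition into connected components, the isomorphism $\Mor^{\Gam}(V,\mathcal{L}_A(\mathscr{E},\mathscr{F}))\cong\Mor_A^{\Gam}(V\otimes\mathscr{E},\mathscr{F})$, and the decomposition of equivariant Hilbert modules into irreducibles --- are all already available above.
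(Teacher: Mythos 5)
Your proposal is correct, and its engine is the same as the paper's: cofiniteness of $\Fus_A(\Gam)$, strong torsion-freeness forcing the standard module, the translation of $\langle e,e\rangle=\Unit$ into $\mathcal{L}_A(\mathscr{E})\cong\C$ with trivial coaction, and the resulting equivariant Morita equivalence. The genuine difference is where the decomposition happens. The paper decomposes on the \emph{algebra} side first: using the fixed-point algebra $A^{\alpha}$ it reduces to ergodic coactions, for which connectedness of $\Fus_A(\Gam)$ is automatic ($A$ is then itself an irreducible equivariant module and every $\mathscr{E}$ embeds into $\mathscr{E}\otimes A$), and fullness of the distinguished module comes from a one-line computation: $\langle\mathscr{E},\mathscr{E}\rangle_A$ is a nonzero invariant ideal which $(\id\otimes\varphi)\alpha$ maps onto nonzero scalars, so it equals $A$. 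You decompose instead on the \emph{fusion-module} side, into connected components $B_k$, and must then reconstruct the algebra decomposition: orthogonality of the fullness ideals $A_k$ (via $\mathcal{L}_A(\mathscr{E}_k,\mathscr{E}_l)=0$ across components, implemented by the rank-one operators $\zeta\mapsto\eta\langle\xi,\zeta\rangle_A$) and exhaustion $A=\oplus_k A_k$ from decomposing $A$ over itself. Two small points in that reconstruction deserve to be said explicitly: your equality $A=\sum_j\langle\mathscr{F}_j,\mathscr{F}_j\rangle_A$ silently drops the cross terms $\langle\mathscr{F}_j,\mathscr{F}_{j'}\rangle_A$, which vanish across components and lie in $A_{k(j)}$ within a component (Cauchy--Schwarz plus the ideal property), so the conclusion stands; and one must note that every component actually occurs among the constituents of $A$ (otherwise its ideal $A_k$ would be orthogonal to all of $A$, hence zero, contradicting nondegeneracy of the inner product on $\mathscr{E}_k$), which also gives finiteness of $N$. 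What the paper's route buys is brevity, since ergodicity makes connectivity and fullness essentially free. What your route buys is that it establishes Meyer's Morita-equivalence formulation directly for an arbitrary finite-dimensional coaction, without invoking the (standard, but not reproved in the paper) fact that a general coaction is equivariantly Morita equivalent to a direct sum of ergodic ones.
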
 

\begin{proof} Let $\Gam$ be a discrete quantum group. Recall that a coaction $(A,\alpha)$ is called \emph{ergodic} if $\Mor^{\Gam}(\Unit ,A) = \C$. By decomposing with respect to the fixed point C$^*$-algebra $A^{\alpha}= \{x\in A\mid \delta(x) = x\otimes 1\}$, one finds that a general coaction $\alpha$ is equivariantly Morita equivalent with $(\oplus_{i=1}^n A_i,\oplus_{i=1}^n \alpha_i)$, with the $(A_i,\alpha_i)$ ergodic coactions. Hence to verify torsion-freeness it is sufficient to check that any \emph{ergodic} coaction is isomorphic to an adjoint coaction.

So, let $A$ be a finite-dimensional C$^*$-algebra with an ergodic coaction $\alpha$ by $(\C[\Gam],\Delta)$. We claim that $\Fus_A(\Gam)$ is connected. Indeed, in this case $A$ itself is an irreducible equivariant Hilbert $A$-module, and any equivariant Hilbert $A$-module $\mathscr{E}$ is contained in $\mathscr{E}\otimes A$, considered as the tensor product of the unitary corepresentation $\mathscr{E}$ and the equivariant Hilbert $A$-module $A$, by the adjoint of the module map $\mathscr{E}\otimes A \rightarrow A$. 

Thus $\Fus_A(\Gam)$ is a non-zero connected cofinite fusion module for $\Fus(\Gam)$. Hence, if we assume $\Gam$ is strongly torsion-free, it is isomorphic to $\Fus(\Gam)$ as a based $\Fus(\Gam)$-module. 

It follows that there exists an irreducible $A$-equivariant Hilbert module $\mathscr{E}$ with $\langle[\mathscr{E}],[\mathscr{E}]\rangle = \Unit $, that is, $\mathcal{L}_A(\mathscr{E}) = \C$. But, by ergodicity of $\alpha$ and invariance and faithfulness of $\varphi_A$, we have for $\xi\neq 0$ that \[(\id\otimes \varphi)(\langle \alpha(\xi),\alpha(\xi)\rangle_{A\otimes \C\rbrack \Gam\rbrack}) = \varphi_A(\langle \xi,\xi\rangle_A)1\neq0,\] hence $\langle \mathscr{E},\mathscr{E}\rangle_A = A$. It follows that $\mathscr{E}$ establishes an equivariant Morita equivalence between $A$ and the trivial coaction on $\C$, hence $(A,\alpha)$ is isomorphic to an adjoint coaction.
\end{proof}

\begin{Cor}\label{CorStrongTor} The following discrete quantum groups are strongly torsion-free, and hence torsion-free.
\begin{enumerate}
\item The dual of a free unitary quantum group.
\item The free product of strongly torsion-free discrete quantum groups.
\end{enumerate}
\end{Cor}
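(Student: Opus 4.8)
The plan is to derive both statements directly from the fusion-ring results of the first section, using Theorem \ref{TheoTorFree} as the bridge: since that theorem says a discrete quantum group is torsion-free as soon as its fusion ring is torsion-free, in each case it suffices to identify the relevant fusion ring $\Fus(\Gam)$ and quote a torsion-freeness statement already proved. The only genuinely external input is the explicit combinatorial description of these fusion rings, both of which are classical.

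For the first statement, let $\Gam$ be the dual of a free unitary quantum group, so that $\C[\Gam]=A_u(Q)$. I would recall Banica's computation of the representation theory (see \cite{Ban99,VDW96}): the irreducible corepresentations are labelled by the words in two letters $\pi_+,\pi_-$, with $\pi_+$ the fundamental corepresentation, $\pi_-=\overline{\pi_+}\neq\pi_+$, and fusion rules exactly \[w\otimes z = \underset{xu = w,\ \overline{u}y = z}{\sum_{\exists u}}xy.\] Thus $\Fus(\Gam)\cong A(2)$ as based rings, once one matches the involution $\overline{[V]}=[\overline{V}]$ with the order-reversing, $\pi_+\leftrightarrow\pi_-$ involution of Example \ref{ExaA2}. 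Since torsion-freeness is a property of the underlying based-ring (module) structure and does not depend on the particular dimension function chosen, Theorem \ref{TheoA2} shows that $\Fus(\Gam)$ is torsion-free; hence $\Gam$ is strongly torsion-free, and torsion-free by Theorem \ref{TheoTorFree}.

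For the second statement, let $\Gam=*_s\Gam_s$ be the free product of discrete quantum groups $\Gam_s$, i.e.\ the dual of the free product of the corresponding compact quantum groups. Here I would invoke Wang's description of the representation theory of such a free product: the irreducibles are the alternating words in the nontrivial irreducibles of the factors, and the tensor product rule is obtained by amalgamating along the unit precisely as in the free-product construction of fusion rings. This yields $\Fus(\Gam)\cong *_s\Fus(\Gam_s)$, with dimension function the free product of the $d_s$. If each $\Gam_s$ is strongly torsion-free, then each $\Fus(\Gam_s)$ is torsion-free, so $*_s\Fus(\Gam_s)$ is torsion-free by Theorem \ref{TheoFreeProd}; hence $\Gam$ is strongly torsion-free, and torsion-free by Theorem \ref{TheoTorFree}.

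The abstract argument is immediate once the fusion rings are identified, so the only place anything could go wrong is in matching these two classical combinatorial descriptions with Example \ref{ExaA2} and the free-product example. I expect the free-product case to require the most care: one must confirm that passing to the dual commutes with forming free products in the relevant sense, so that the fusion ring of $*_s\Gam_s$ really is the free product of the $\Fus(\Gam_s)$, and that the resulting object indeed satisfies the axioms needed to apply Theorem \ref{TheoFreeProd}.
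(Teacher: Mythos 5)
Your argument for part 2 is correct and coincides with the paper's: by \cite{Wan95} the fusion ring of a free product of discrete quantum groups is the free product of their fusion rings, so Theorem \ref{TheoFreeProd} applies, and Theorem \ref{TheoTorFree} upgrades strong torsion-freeness to torsion-freeness.

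Part 1, however, has a genuine gap. You assert that for \emph{every} free unitary quantum group the irreducibles are indexed by words in $\pi_+,\pi_-$ with the fusion rules of Example \ref{ExaA2}, i.e.\ that $\Fus(\Gam)\cong A(2)$ as based rings. Banica's computation \cite{Ban97} yields this only under the normalisation $F\bar{F}\in\R\,\mathrm{id}$ (with $F$ of size at least $2$), whereas in this paper ``free unitary quantum group'' means $A_u(F)$ for an \emph{arbitrary} invertible $F$, in the sense of \cite{VDW96}. The claim already fails for a $1\times 1$ matrix, where $A_u(F)=C(S^1)$ and the fusion ring is $\Z_{\Z}$, not $A(2)$; for a general non-normalisable $F$ of larger size the fusion ring is a nontrivial free product of copies of $A(2)$ and $\Z_{\Z}$. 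The missing ingredient is Wang's structure theorem \cite{Wan02}: a general $A_u(F)$ decomposes as a free product of quantum groups which are either circle algebras (fusion ring $\Z_{\Z}$, torsion-free by Proposition \ref{PropGroup}) or of Banica type (fusion ring $A(2)$, torsion-free by Theorem \ref{TheoA2}). The paper therefore proves part 2 \emph{first} and deduces part 1 from it --- the logical order is the reverse of yours, and your proof of part 1 as written covers only the subclass $F\bar{F}\in\R\,\mathrm{id}$. (Your side remark that torsion-freeness depends only on the based-ring structure and not on the chosen dimension function is correct, and is indeed needed even in that subclass, since $d([\pi_+])=\dim(V)$ need not equal $2$.)
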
 
\begin{proof} It is well-known that the fusion ring of a free product is the free product of the fusion rings associated to the factors \cite{Wan95}. Hence, by Theorem \ref{TheoFreeProd}, a free product of strongly torsion-free discrete quantum groups is again torsion-free. 

For the first point, notice that, by \cite{Wan02}, a general free unitary quantum group (in the sense of \cite{VDW96}) is a free product of free unitary quantum groups of the form $A_u(F)$ \cite{Ban97}, which are either function algebras on the circle group and hence have fusion ring $\Z_{\Z}$, or else have fusion ring $A(2)$ \cite{Ban97}. The duals of free unitary quantum groups are thus free by combining the second part of the corollary with Proposition \ref{PropGroup} and Theorem \ref{TheoA2}. 
\end{proof} 

\begin{Rem} For the free orthogonal quantum groups of \cite{VDW96}, one has to be more careful. Indeed, the free orthogonal quantum group associated to a one-dimensional matrix is the torsion group $\Z_2$, and can by \cite{Wan02} appear as a (free) component in a general free orthogonal quantum group. Nevertheless, if the fundamental representation of the free orthogonal quantum group is irreducible of dimension bigger than 2, its associated fusion ring is $A(1)$ \cite{Ban96}. So, in this case, the dual discrete quantum group is strongly torsion-free by Proposition \ref{TheoA1}. We hence find back in a combinatorial way a result which was proven algebraically in \cite{Voi11}.  
\end{Rem} 

Corollary \ref{CorStrongTor} leaves open the question as to whether the free product of torsion-free discrete quantum groups is again torsion-free. This question has a positive answer, but its proof is more natural within the context of module C$^*$-categories, which will be treated in the next section, see Theorem \ref{TheoFreeProd2}.

To end this section, we prove that strong torsion-freeness is also preserved by Cartesian products. For this, we will need the following lemma.

\begin{Lem}\label{LemFinSub} Let $\Gam$ be a discrete quantum group, and assume $\Gam$ has a non-trivial finite discrete quantum subgroup. Then $\Gam$ is not torsion-free.
\end{Lem}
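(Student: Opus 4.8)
The plan is to manufacture, out of the finite quantum subgroup, an explicit \emph{ergodic} coaction on a finite-dimensional C$^*$-algebra which cannot be adjoint, and then invoke the characterisation of torsion-freeness used in Theorem~\ref{TheoTorFree}. Concretely, a non-trivial finite discrete quantum subgroup $\Lam \leq \Gam$ is encoded by a finite-dimensional Hopf $^*$-subalgebra $\C[\Lam]\subseteq \C[\Gam]$ with $\dim \C[\Lam]>1$; being finite-dimensional, $\C[\Lam]$ is automatically a C$^*$-algebra. First I would observe that, since $\C[\Lam]$ is a Hopf subalgebra, the comultiplication restricts to a unital $^*$-homomorphism $\alpha := \Delta_{\mid \C[\Lam]}\colon \C[\Lam] \to \C[\Lam]\otimes \C[\Gam]$, and the coassociativity and counit axioms of $\Delta$ show at once that $(\C[\Lam],\alpha)$ is a coaction of $(\C[\Gam],\Delta)$ on a finite-dimensional C$^*$-algebra.

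The second step is to check that this coaction is ergodic. If $x\in \C[\Lam]$ satisfies $\alpha(x)=\Delta(x)=x\otimes 1$, then applying $\id\otimes \varphi$ and using invariance of the Haar state $\varphi$ yields $\varphi(x)1 = (\id\otimes\varphi)\Delta(x) = x$, so $x\in \C\Unit$. Hence $\Mor^{\Gam}(\Unit,\C[\Lam]) = \C$ and $\alpha$ is ergodic.

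Finally, suppose for contradiction that $\Gam$ is torsion-free. Then $(\C[\Lam],\alpha)$ is isomorphic to a direct sum of adjoint coactions on algebras $B(V_i)$. The fixed-point algebra of such a direct sum is $\oplus_i \Mor^{\Gam}(V_i,V_i)$, which is one-dimensional precisely when there is a single summand and $V:=V_1$ is irreducible; ergodicity therefore forces a $^*$-isomorphism of C$^*$-algebras $\C[\Lam]\cong B(V)$ onto a single full matrix block. But the counit $\varepsilon\colon \C[\Lam]\to \C$ is a non-zero character, and a full matrix algebra $B(V)$ admits a character only when $\dim V = 1$. This forces $\dim \C[\Lam]=1$, contradicting non-triviality of $\Lam$, so $\Gam$ is not torsion-free.

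The main obstacle is really the bookkeeping at the start: correctly translating \emph{non-trivial finite discrete quantum subgroup} into the finite-dimensional Hopf $^*$-subalgebra $\C[\Lam]$, and verifying that $\Delta$ genuinely restricts to land in $\C[\Lam]\otimes \C[\Gam]$ (via the Hopf-subalgebra property $\Delta(\C[\Lam])\subseteq \C[\Lam]\otimes\C[\Lam]$) rather than only in $\C[\Gam]\otimes\C[\Gam]$. Once the coaction is in place, the ergodicity computation and the final matrix-block contradiction are routine; one should merely double-check that the characterisation of torsion-freeness being invoked --- every ergodic coaction is isomorphic to an adjoint coaction --- is exactly the one extracted in the proof of Theorem~\ref{TheoTorFree}.
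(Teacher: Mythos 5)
Your proof is correct and follows essentially the same route as the paper: restrict $\Delta$ to the finite-dimensional Hopf $^*$-subalgebra $\C[\Lam]$ to obtain an ergodic coaction, then use the counit as a character on $B(V)$ to rule out its being adjoint. You merely make explicit two steps the paper leaves implicit (the ergodicity check via invariance of $\varphi$, and the reduction of a direct sum of adjoint coactions to a single irreducible summand), which is fine.
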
 
\begin{proof} Let $\C\lbrack \Lam \rbrack \subseteq\C\lbrack \mathbbm{\Gam}\rbrack$ be a finite-dimensional Hopf C$^*$-subalgebra. Then \[\Delta: \C\lbrack \Lam \rbrack  \rightarrow \C\lbrack \Lam\rbrack \otimes \C\lbrack \Lam\rbrack  \subseteq \C\lbrack \Lam\rbrack \otimes \C\lbrack \mathbbm{\Gam}\rbrack \] determines an ergodic coaction of $\Gam$. Assume that it is isomorphic to an adjoint coaction associated to a corepresentation $(V,\delta)$. Then the counit on $\C\lbrack \Lam\rbrack$ gives a character on $B(V)$, hence $V$ is one-dimensional and $\C\lbrack \Lam\rbrack = \C$. 
\end{proof}

\begin{Cor} Let $\Gam_1$ and $\Gam_2$ be strongly torsion-free discrete quantum groups. Then $\Gam_1\times \Gam_2$ is also strongly torsion-free.
\end{Cor}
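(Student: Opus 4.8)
The plan is to reduce the statement to the tensor-product Theorem \ref{TheoTenFus}. First I would record that the Cartesian product is governed at the level of fusion rings by the tensor product of fusion rings: the irreducible corepresentations of $\Gam_1\times\Gam_2$ are precisely the exterior products $V_1\boxtimes V_2$ of irreducibles of the two factors, and one has $\Mor^{\Gam_1\times\Gam_2}(Z_1\boxtimes Z_2,(V_1\boxtimes V_2)\otimes(W_1\boxtimes W_2))\cong \Mor^{\Gam_1}(Z_1,V_1\otimes W_1)\otimes \Mor^{\Gam_2}(Z_2,V_2\otimes W_2)$, so that fusion multiplicities and dimensions multiply. This yields an identification $\Fus(\Gam_1\times\Gam_2)\cong \Fus(\Gam_1)\odot\Fus(\Gam_2)$ of fusion rings. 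Since $\Gam_1$ and $\Gam_2$ are strongly torsion-free, $\Fus(\Gam_1)$ and $\Fus(\Gam_2)$ are torsion-free, so it suffices to prove that their tensor product is torsion-free.

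I would then argue by contradiction. Suppose $\Fus(\Gam_1)\odot\Fus(\Gam_2)$ is not torsion-free. By Theorem \ref{TheoTenFus}, the factors then admit non-trivial isomorphic finite fusion subrings $(\Z_{I_i'},\otimes)\subseteq\Fus(\Gam_i)$. The crux of the argument is to turn such a finite fusion subring into a non-trivial finite discrete quantum subgroup of $\Gam_i$. To this end I would consider the linear span $\C[\Lam_i]\subseteq\C[\Gam_i]$ of the matrix coefficients of those irreducible corepresentations whose class lies in $I_i'$, and verify that it is a Hopf $^*$-subalgebra: closure under multiplication and under the $^*$-operation follows from $I_i'$ being closed under $\otimes$ and the involution, while the comultiplication, counit and antipode manifestly preserve matrix coefficients. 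As $I_i'$ is finite, $\C[\Lam_i]=\bigoplus_{[V]\in I_i'}B(V)^*$ is finite-dimensional, and as $I_i'$ is non-trivial it is strictly larger than $\C$.

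Once $\C[\Lam_i]\subsetneq\C[\Gam_i]$ is available as a finite-dimensional Hopf $^*$-subalgebra, i.e. a non-trivial finite discrete quantum subgroup, Lemma \ref{LemFinSub} shows that $\Gam_i$ is not torsion-free. This contradicts Theorem \ref{TheoTorFree}, by which a strongly torsion-free discrete quantum group is torsion-free. Hence $\Fus(\Gam_1)\odot\Fus(\Gam_2)$, and therefore $\Fus(\Gam_1\times\Gam_2)$, is torsion-free, which is exactly the assertion that $\Gam_1\times\Gam_2$ is strongly torsion-free.

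The main obstacle I anticipate is the middle step: passing from a purely combinatorial finite fusion subring to a genuine Hopf $^*$-subalgebra to which Lemma \ref{LemFinSub} applies. This is essentially a Tannaka--Krein--Woronowicz type statement, and the point requiring care is that cosemisimplicity of $\C[\Gam_i]$ guarantees that the span of matrix coefficients over a tensor-, involution- and subobject-closed set of irreducibles is indeed a well-defined Hopf $^*$-subalgebra whose irreducible corepresentations are exactly the members of $I_i'$. It is worth emphasizing that one cannot shortcut this through fusion rings alone: torsion-freeness does not pass to fusion subrings, and a non-trivial finite fusion subring with integer-valued dimension function is never torsion-free, so the quantum-group input through Lemma \ref{LemFinSub} is genuinely essential.
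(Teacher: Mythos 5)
Your proof is correct and takes essentially the same route as the paper's: identify $\Fus(\Gam_1\times\Gam_2)$ with $\Fus(\Gam_1)\odot\Fus(\Gam_2)$, apply Theorem \ref{TheoTenFus} to obtain non-trivial finite fusion subrings, promote these to finite quantum subgroups, and conclude via Lemma \ref{LemFinSub} together with Theorem \ref{TheoTorFree}. The only difference is that the paper simply asserts the passage from a finite fusion subring to a finite quantum subgroup, whereas you verify it explicitly through the span of matrix coefficients.
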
 

\begin{proof} If $\Gam$ is a discrete quantum group, a finite fusion subring of $\Fus(\Gam)$ necessarily arises from a finite quantum subgroup of $\Gam$.  As the fusion ring of a Cartesian product of discrete quantum groups is the tensor product of the fusion rings associated to the components \cite{Wan95}, the corollary follows from Theorem \ref{TheoTenFus} and Lemma \ref{LemFinSub}.
\end{proof} 

\section{Torsion-freeness for tensor C$^*$-categories}

In this section, we introduce torsion-freeness in the general setting of rigid tensor C$^*$-categories \cite{LR97,NT13}.  We always assume that our tensor categories are strict with irreducible unit.

\begin{Def} Let $(\mathscr{C},\otimes)$ be a rigid tensor C$^*$-category. A \emph{$Q$-system} \cite{LR97} in $\mathscr{C}$ (or \emph{C$^*$-algebra internal to $\mathscr{C}$}) consists of an associative algebra $(A,m,\eta)$ in $\mathscr{C}$ with $m: A\otimes A \rightarrow A$ the multiplication and $\eta: \Unit \rightarrow A$ the (non-trivial) unit, and such that moreover $m$ is co-isometric.

We call $A$ \emph{ergodic} if $\mathrm{dim}(\mathrm{Mor}(\Unit ,A)) = 1$.
\end{Def} 

\begin{Exa} Let $X$ be an object of $(\mathscr{C},\otimes)$. Let $R: \Unit \rightarrow \overline{X}\otimes X$ and $\overline{R}: \Unit \rightarrow X\otimes \overline{X}$ be solutions to the conjugate equations. Then $X\otimes \overline{X}$ is a $Q$-system by \[m=\frac{1}{\|R\|}(\id\otimes R^*\otimes \id),\quad \eta = \|R\|\overline{R}.\] It is ergodic if and only if $X$ is irreducible. 
\end{Exa} 

If $\Gam$ is a discrete quantum group, any coaction $(A,\alpha)$ on a finite-dimensional C$^*$-algebra can be made into a $Q$-system by Lemma \ref{LemQ}. Conversely, any $Q$-system arises in this way. We first introduce some terminology which will be needed also later on.

\begin{Def} Let $A=(A,m,\eta)$ be a $Q$-system in a rigid tensor C$^*$-category $(\mathscr{C},\otimes)$. A \emph{unitary (right) module} for $A$ is a right $A$-module $(X,n)$ in $(\mathscr{C},\otimes)$ with $n:X\otimes A\rightarrow X$ a co-isometry.
\end{Def} 

One similarly introduces unitary left modules and unitary bimodules. 

The following proposition is probably well-known, but we could not find a direct proof in the literature. 

\begin{Prop}\label{PropQGam} Let $\Gam$ be a discrete quantum group. Then each $Q$-system in $\Corep(\Gam)$ arises from a coaction of $(\C\lbrack \Gam\rbrack,\Delta)$. 
\end{Prop}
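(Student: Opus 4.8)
The plan is to show that the data $(A,m,\eta)$ endows the Hilbert space $A$ with the structure of a finite-dimensional C$^{*}$-algebra in such a way that the corepresentation $\alpha\colon A\to A\otimes\C[\Gam]$ becomes a unital $^{*}$-homomorphism, i.e.\ a coaction, and that applying Lemma~\ref{LemQ} to this coaction returns the original $Q$-system. The first, purely formal, step is to note that since $m$ and $\eta$ are morphisms in $\Corep(\Gam)$, equivariance of $m$ unwinds to $\alpha(ab)=\alpha(a)\alpha(b)$ (the product on the right taken in $A\otimes\C[\Gam]$) and equivariance of $\eta$ to $\alpha(1_A)=1_A\otimes 1$, where $1_A=\eta(1)$ and the product on $A$ is $ab:=m(a\otimes b)$. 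Thus, once a $^{*}$-structure is in place, $\alpha$ is automatically a unital algebra homomorphism, and it is injective because $(\id\otimes\varepsilon)\alpha=\id$ for the counit $\varepsilon$ of $\C[\Gam]$.

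The heart of the argument is the construction of the involution, for which I would use the left regular representation $L\colon A\to B(A)$, $L_a b:=m(a\otimes b)$. Associativity makes $L$ an algebra homomorphism, the unit axiom gives $L_{1_A}=\id$, and $L$ is faithful since $L_a(1_A)=a$. The claim is that the image $L(A)$ is closed under the Hilbert-space adjoint of $B(A)$, so that for each $a$ there is a unique $a^{\sharp}\in A$ with $L_{a}^{*}=L_{a^{\sharp}}$; the map $a\mapsto a^{\sharp}$ is then a conjugate-linear anti-multiplicative involution, and $A\cong L(A)$ inherits the C$^{*}$-norm of $B(A)$ (a finite-dimensional, hence closed, $^{*}$-subalgebra). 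To verify the claim one uses the Frobenius relations $(\id\otimes m)(m^{*}\otimes\id)=m^{*}m=(m\otimes\id)(\id\otimes m^{*})$, which a co-isometric associative algebra in a rigid tensor C$^{*}$-category satisfies as part of the $Q$-system structure (cf.\ \cite{LR97}); feeding these, together with $mm^{*}=\id$ and the unit axiom, into the computation of $\langle L_a^{*}b,c\rangle=\langle m^{*}b,a\otimes c\rangle$ exhibits $L_a^{*}$ as left multiplication by an explicit element built from $m^{*}$, $\eta$ and the duality morphisms of $A$.

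It then remains to recover the functional of Lemma~\ref{LemQ} and to check that $\alpha$ is $^{*}$-preserving. For the first point, set $\varphi_A:=\eta^{*}\colon A\to\C$, so that $\varphi_A(a)=\langle 1_A,a\rangle$. Since $\eta$ is a morphism $\Unit\to A$, its adjoint $\eta^{*}$ is a morphism $A\to\Unit$, which is precisely the statement that $\varphi_A$ is $\alpha$-invariant; positivity and faithfulness follow from those of the inner product via $\varphi_A(a^{\sharp}a)=\langle a,a\rangle$. As $L$ is a $^{*}$-representation with cyclic unit vector $1_A$, one gets $\langle a,b\rangle=\langle 1_A,a^{\sharp}b\rangle=\varphi_A(a^{\sharp}b)$, so the given inner product is exactly the one the coaction produces in Lemma~\ref{LemQ}. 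For $^{*}$-preservation, the involution $^{\sharp}$ was manufactured out of the inner product and the equivariant maps $m,\eta$ (equivalently $m^{*},\eta^{*}$); combined with the unitarity relation $\sum\langle a_{(0)},b_{(0)}\rangle\,a_{(1)}^{*}b_{(1)}=\langle a,b\rangle 1$ for the corepresentation, this forces $\alpha(a^{\sharp})=\alpha(a)^{*}$, where the involution on $A\otimes\C[\Gam]$ is $a\otimes x\mapsto a^{\sharp}\otimes x^{*}$.

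The main obstacle is the involution step: only the inner product and the co-isometric multiplication are given, and one must simultaneously extract a conjugate-linear involution and prove that $L(A)$ is self-adjoint, which genuinely requires the Frobenius relation linking $m$ to $m^{*}$ rather than merely co-isometry and associativity. The second delicate point is the compatibility $\alpha(a^{\sharp})=\alpha(a)^{*}$, where care is needed because the $^{*}$-operation on the conjugate corepresentation involves the antipode of $\C[\Gam]$; this is what makes the \emph{unitarity} of $\alpha$, and not merely its being a corepresentation, essential to the argument.
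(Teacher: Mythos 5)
Your proposal is correct and follows essentially the same route as the paper's proof: define $a^{\sharp}=L_a^*1_A$ and use the Frobenius/bimodule property of $m^*$ (the paper cites this as identity (d) in Section~6 of \cite{LR97}) to get $L_a^*=L_{a^{\sharp}}$, take $\varphi_A=\langle 1_A,-\rangle$ as the faithful positive invariant functional, and invoke unitarity of the corepresentation for $^*$-preservation of $\alpha$. The only difference is that the paper makes your last, somewhat informal step concrete by exhibiting the unitary $U:a\otimes g\mapsto \alpha(a)(1\otimes g)$ on $A\otimes\C\lbrack\Gam\rbrack$ and checking $U(L_a\otimes 1)U^*=L_{\alpha(a)}$, which immediately yields $\alpha(a^{\sharp})=\alpha(a)^*$.
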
 
\begin{proof} If $(A,m,\eta)$ is a $Q$-system in $\Corep(\Gam)$, this immediately gives us a unital algebra $A$ with a coaction $\alpha: A\rightarrow A\otimes \C\lbrack \Gam\rbrack$. It remains to show that $A$ is a C$^*$-algebra and that $\alpha$ is $^*$-preserving.

Since $A$ is also a finite dimensional Hilbert space, we can make sense of the adoint $L_a^*$ of the operator of left multiplication with $a\in A$. Define then \[a^* = L_a^*1.\] Since $m$ is an $A$-bimodule map, and since the adjoint of an $A$-bimodule map is still an $A$-bimodule map by identity (d) in \cite[Section 6]{LR97}, we find \begin{multline*}\langle a,b^*c \rangle = \langle m^*a,L_b^*1\otimes c\rangle = \langle (b\otimes 1)m^*a,1\otimes c \rangle \\= \langle m^*(ba),c\rangle = \langle ba,c\rangle = \langle a, L_b^*c\rangle.\end{multline*}
Hence $L_a^* = L_{a^*}$, and $A$ is a C$^*$-algebra. 

Write $\varphi_A(a) = \langle 1,a\rangle$. Then we see $\varphi_A(a^*b) = \langle a,b\rangle$, and hence $\varphi_A$ is a faithful positive functional on $A$. Moreover, since $\alpha$ is a unitary corepresentation, we find \[(\varphi_A\otimes \id)(\alpha(a)^*\alpha(b)) = \varphi_A(a^*b)1.\] Hence \[U: A\otimes \C\lbrack \Gam\rbrack \rightarrow A\otimes \C\lbrack \Gam\rbrack,\quad a\otimes g\mapsto \alpha(a)(1\otimes g)\] is unitary. An easy computation shows $U(L_a\otimes 1)U^* = L_{\alpha(a)}$, and hence $\alpha(a^*) = \alpha(a)^*$. 
\end{proof}

From now on, by module we will mean \emph{right} module. 

\begin{Lem} Let $A=(A,m,\eta)$ be an ergodic $Q$-system. Then the category of unitary $A$-modules is a C$^*$-category.
\end{Lem}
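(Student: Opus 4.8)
The plan is to show that the category of unitary right $A$-modules, with morphisms the $A$-module intertwiners $\Mor_A(X,Y)\subseteq \Mor(X,Y)$, inherits all of its C$^*$-structure from the ambient rigid tensor C$^*$-category $\mathscr{C}$. Identities, composition, and the complex-linear and normed structure on the (finite-dimensional) spaces $\Mor_A(X,Y)$ are evidently inherited from $\mathscr{C}$, and once we know that $\Mor_A(X,Y)$ is closed under the adjoint operation $*$ of $\mathscr{C}$, the C$^*$-identity and the positivity of $f^*\circ f$ follow automatically, since $\Mor_A(X,X)$ is then a norm-closed $*$-subalgebra of the C$^*$-algebra $\End(X)$. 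So the entire content is to prove that $f\in\Mor_A(X,Y)$ implies $f^*\in\Mor_A(Y,X)$.

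The key tool I would isolate first is the module Frobenius relation
\[ n^*\circ n = (\mathrm{id}_X\otimes m)\circ(n^*\otimes \mathrm{id}_A) \]
for a unitary right module $(X,n)$. The point is that $n\colon X\otimes A\to X$ is itself a morphism of right $A$-modules from $(X\otimes A,\,\mathrm{id}_X\otimes m)$ to $(X,n)$ --- this is precisely the module associativity axiom --- so the displayed relation is exactly the statement that its adjoint $n^*$ is again a module map. This is the module analogue of the bimodule reciprocity already invoked in the proof of Proposition \ref{PropQGam}, and it follows by the same argument from \cite[Section 6]{LR97}: the co-isometry of $m$, together with the unit and associativity axioms, makes $A$ a self-conjugate object with conjugate equations solved by $R=m^*\circ\eta$, and one bends the $A$-leg using $R$.

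Granting this, the proof is a formal diagram chase. Taking adjoints of $f\circ n_X = n_Y\circ(f\otimes \mathrm{id}_A)$ gives
\[ n_X^*\circ f^* = (f^*\otimes \mathrm{id}_A)\circ n_Y^*. \qquad (\star)\]
Using that $n_X$ is a co-isometry, then $(\star)$, then the module Frobenius relation for $Y$, one obtains
\[ f^*\circ n_Y = n_X\circ n_X^*\circ f^*\circ n_Y = n_X\circ(f^*\otimes \mathrm{id}_A)\circ(\mathrm{id}_Y\otimes m)\circ(n_Y^*\otimes \mathrm{id}_A). \]
Pulling $f^*$ past $m$ by functoriality and rewriting $n_X\circ(\mathrm{id}_X\otimes m)=n_X\circ(n_X\otimes \mathrm{id}_A)$ via associativity of the $X$-action regroups this as $n_X\circ\big((n_X\circ(f^*\otimes \mathrm{id}_A)\circ n_Y^*)\otimes \mathrm{id}_A\big)$; the inner expression $n_X\circ(f^*\otimes \mathrm{id}_A)\circ n_Y^*$ equals $n_X\circ n_X^*\circ f^*=f^*$ by $(\star)$ and co-isometry of $n_X$, leaving exactly $n_X\circ(f^*\otimes \mathrm{id}_A)$. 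Hence $f^*\circ n_Y=n_X\circ(f^*\otimes \mathrm{id}_A)$, i.e.\ $f^*\in\Mor_A(Y,X)$, which establishes $*$-closure and completes the proof.

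The main obstacle is the reciprocity/Frobenius step: everything after it is purely formal bookkeeping, but the relation $n^*\circ n=(\mathrm{id}_X\otimes m)\circ(n^*\otimes \mathrm{id}_A)$ genuinely uses the rigidity of $\mathscr{C}$ and the special Frobenius structure encoded in the co-isometry of $m$, so I would verify the conjugate equations for $A$ carefully before invoking the bending argument. I would also remark that ergodicity of $A$ plays no role in this particular statement; it is the standing hypothesis for the surrounding results, but the category of unitary modules is a C$^*$-category for an arbitrary $Q$-system.
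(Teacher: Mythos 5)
Your reduction of the lemma to $*$-closure of the morphism spaces is the same as the paper's, the identity you isolate, $n^*n=(\id_X\otimes m)(n^*\otimes\id_A)$, is exactly the identity the paper establishes, and your concluding diagram chase from that identity is correct. The gap is that you never actually prove the identity. Your first justification --- that it ``is exactly the statement that $n^*$ is again a module map'' --- is circular: that adjoints of module maps are module maps is the content of the lemma. Your second justification, bending along $R=m^*\eta$, does not close the gap either. Note first that the conjugate equations for $R=m^*\eta$ already presuppose the Frobenius identity $(\id_A\otimes m)(m^*\otimes\id_A)=m^*m=(m\otimes\id_A)(\id_A\otimes m^*)$ for $A$, which is not among the paper's $Q$-system axioms (only unitality, associativity and co-isometry of $m$ are assumed), so it needs its own proof --- this is what \cite[Lemma 6.1]{LR97} provides, by a positivity argument, not by bending. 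More seriously, even granting all of this, what bending gives you is the morphism $\hat{n}:=(n\otimes\id_A)(\id_X\otimes R):X\to X\otimes A$, and purely algebraic manipulation shows that $\hat{n}$ is a module map satisfying $n\hat{n}=\id_X$ and $\hat{n}n=(n\otimes\id_A)(\id_X\otimes m^*)$. But what you need is $\hat{n}=n^*$, i.e.\ that this particular section of the co-isometry $n$ is its adjoint; equivalently, that the idempotent $\hat{n}n$ is self-adjoint (and hence equals the orthogonal projection $n^*n$). That identification \emph{is} the module Frobenius relation, so the bending argument assumes exactly what it is supposed to prove; no formal manipulation of the conjugate equations can yield it, because it is a positivity statement.

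The paper's proof supplies precisely this missing step. Using partial traces it first shows $T:=(\id_A\otimes\mathrm{Tr}_A)(m^*m)=\id_A$ --- and note that this normalization uses ergodicity, since it rests on $T\circ\eta\in\Mor(\Unit,A)=\C\eta$; your closing remark that ergodicity plays no role is therefore at odds with the argument actually needed (the lemma does hold for arbitrary $Q$-systems, as the paper itself remarks, but not by this proof, and your proposal does not establish it in any case). Then, setting $r=(\id_X\otimes m)(n^*\otimes\id_A)-n^*n$, one computes $r^*r=(n\otimes\id_A)(\id_X\otimes m^*m)(n^*\otimes\id_A)-n^*n\geq 0$, whose trace vanishes by the normalization, forcing $r=0$. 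So your proposal has the correct skeleton and a correct formal endgame, but it is missing the one step that carries the analytic content of the lemma.
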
 

The lemma remains true for arbitrary $Q$-systems, but we will not need this result.

\begin{proof} The only thing which is not clear is if the adjoints of $A$-linear morphisms are again $A$-linear. The argument for this is similar to \cite[Lemma 6.1]{LR97}. 

Namely, fix a unitary module $(X,n)$ for $A=(A,m,\eta)$. Recall that, by means of the standard solutions, one can form \emph{partial traces} \cite[Section 3]{LR97}, \cite[Section 2.5]{NT13}\[(\id\otimes \mathrm{Tr}_Y): \End(X\otimes Y) \rightarrow \End(X)\] which are faithful, completely positive $\End(X)$-linear maps. Now as $m^*m$ is an $A$-bimodule map from $A\otimes A$ to $A\otimes A$, it follows that $T= (\id\otimes \mathrm{Tr}_A)m^*m$ is a (left) $A$-module map $A\rightarrow A$. Hence $T = m(\id_A\otimes (T\circ \eta))$. By ergodicity however, $T\circ \eta = c\eta$ for some $c>0$, and so $T = c\id_A$. On the other hand, $\mathrm{Tr}_{A\otimes A}(m^*m) = \mathrm{Tr}_A(mm^*) = \mathrm{Tr}_A(\id_A)$, so $c=1$. 

Now a small computation reveals that, with $r = (1_X\otimes m)(n^*\otimes 1_A)-n^*n$, \[r^*r = (n\otimes 1_A)(1_X\otimes m^*m)(n^*\otimes 1_A)-n^*n\geq0.\] As \begin{multline*}\mathrm{Tr}_{X\otimes A}\left((n\otimes 1_A)(1_X\otimes m^*m)(n^*\otimes 1_A)-n^*n\right) \\= \mathrm{Tr}_{X}(nn^*)- \mathrm{Tr}_{X\otimes A}(n^*n)=0,\end{multline*} we deduce that $r=0$, and so \[(1_X\otimes m)(n^*\otimes 1_A)=n^*n.\]

We deduce that an $f \in \End(V)$ lies in $\End_A(V)$ if and only if $n(f\otimes 1)n^* =f$, and hence $\End_A(V)$ is a $^*$-algebra. 
\end{proof}

\begin{Cor} The category of unitary right modules is a (strict, left) $(\mathscr{C},\otimes)$-module C$^*$-category \cite{DCY13} by \[V\otimes (X,n) = (V\otimes X,\id_V\otimes n).\]
\end{Cor}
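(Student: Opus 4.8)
The plan is to verify directly that the prescribed action satisfies the axioms of a (strict, left) module C$^*$-category, almost all of which are inherited from the strictness and $^*$-bifunctoriality of $(\mathscr{C},\otimes)$ together with the preceding lemma.

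First I would check that for a unitary module $(X,n)$ and any object $V$, the pair $(V\otimes X,\id_V\otimes n)$ is again a unitary right $A$-module. Since $\id_V\otimes(-)$ is a functor, applying it to the module associativity and unit identities $n(\id_X\otimes m)=n(n\otimes \id_A)$ and $n(\id_X\otimes \eta)=\id_X$ immediately yields the corresponding identities for $\id_V\otimes n$. For the co-isometry condition, using that $\otimes$ is a $^*$-bifunctor on $\mathscr{C}$ we get $(\id_V\otimes n)(\id_V\otimes n)^* = \id_V\otimes(nn^*) = \id_{V\otimes X}$, so $\id_V\otimes n$ is co-isometric.

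Next I would verify functoriality on morphisms. Given $f\in \Mor(V,W)$ and an $A$-linear $g\colon (X,n_X)\to (Y,n_Y)$, the morphism $f\otimes g$ is again $A$-linear for the new module structures, since
\[
(\id_W\otimes n_Y)\bigl((f\otimes g)\otimes \id_A\bigr) = f\otimes \bigl(n_Y(g\otimes \id_A)\bigr) = f\otimes (g\,n_X) = (f\otimes g)(\id_V\otimes n_X),
\]
where the middle equality is $A$-linearity of $g$. Preservation of composition and identities is inherited from $\otimes$ on $\mathscr{C}$, so the action is a bifunctor. Because $(\mathscr{C},\otimes)$ is strict with irreducible unit, the associativity and unit constraints of the action are literal identities, $V\otimes(W\otimes (X,n)) = (V\otimes W)\otimes (X,n)$ and $\Unit\otimes(X,n)=(X,n)$; hence the pentagon and triangle coherence conditions hold trivially.

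Finally I would confirm compatibility with the $^*$-structure: the action must be a $^*$-bifunctor, i.e.\ $(f\otimes g)^*=f^*\otimes g^*$ as module maps. The identity $(f\otimes g)^*=f^*\otimes g^*$ holds in $\mathscr{C}$, and $f^*\otimes g^*$ is $A$-linear by the morphism computation above, provided $g^*$ is $A$-linear whenever $g$ is. This last point is precisely the content of the preceding lemma, whose proof uses ergodicity of $A$; granting it, all axioms of a module C$^*$-category are met. The only genuine subtlety resides here---the C$^*$-structure on the category of modules, already secured by the lemma---while everything else is a formal consequence of strictness and $^*$-bifunctoriality.
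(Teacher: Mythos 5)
Your proof is correct and takes essentially the same route as the paper: the paper states this corollary without any written proof, treating it as immediate from the preceding lemma, and your argument spells out exactly the routine check being left implicit (strictness and $^*$-bifunctoriality of $(\mathscr{C},\otimes)$ give the module and coherence axioms, while the lemma supplies closure of $A$-linear morphism spaces under adjoints). You also correctly isolate the only genuine input, namely that $g^*$ is again $A$-linear, which is precisely what the lemma provides.
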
 

A $Q$-system of the form $X\otimes \overline{X}$ will be called \emph{trivial}. 

\begin{Def} A rigid tensor C$^*$-category will be called \emph{torsion-free} if each ergodic $Q$-system is trivial.
\end{Def}

This definition is compatible with the definition for discrete quantum groups by Proposition \ref{PropQGam}. It is also not hard to show that in a torsion-free rigid C$^*$-category, every $Q$-system is isomorphic to a direct sum of trivial $Q$-systems.

Recall that to any rigid tensor C$^*$-category $(\mathscr{C},\otimes)$ can be associated the fusion ring $\Fus(\mathscr{C})$, determined as in Definition \ref{DefFusQG}, with the dimension function now given by the intrinsic dimension \cite{LR97}. 

\begin{Def} We call a rigid tensor C$^*$-category $(\mathscr{C},\otimes)$ \emph{strongly torsion-free} if $\Fus(\mathscr{C})$ is torsion-free. 
\end{Def} 

We will show that Theorem \ref{TheoTorFree} still holds in this setting. Note that associated to a (strict) module C$^*$-category $(\mathscr{D},\otimes)$ one has the based $\Fus(\mathscr{C})$-module $\Fus(\mathscr{D})=(\Z_J,\otimes)$ where $J$ is the set of equivalence classes of irreducible objects in $\mathscr{D}$.

\begin{Def} A module C$^*$-category $(\mathscr{D},\otimes)$ over $(\mathscr{C},\otimes)$ is called \emph{cofinite} (resp.~ \emph{connected}) if $\Fus(\mathscr{D})$ is a cofinite (resp.~ connected) based $\Fus(\mathscr{C})$-module.
\end{Def}

\begin{Lem}\label{LemTriv} Assume $(\mathscr{D},\otimes)$ is a module C$^*$-category over $(\mathscr{C},\otimes)$, and assume $\Fus(\mathscr{D}) \cong \Fus(\mathscr{C})$ as based modules. Then $(\mathscr{D},\otimes) \cong (\mathscr{C},\otimes)$ as module C$^*$-categories. 
\end{Lem}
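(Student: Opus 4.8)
The plan is to turn the based-module isomorphism into an explicit strict module C$^*$-functor $F\colon \mathscr{C}\to\mathscr{D}$ and then check directly that it is an equivalence. First I would unpack the hypothesis: an isomorphism $\Phi\colon \Fus(\mathscr{C})\to\Fus(\mathscr{D})$ of based modules carries basis elements to basis elements and is $\Fus(\mathscr{C})$-linear. Writing $m_0:=\Phi(\Unit)$, which is the class of an irreducible object $M_0\in\mathscr{D}$, linearity gives $\Phi(\alpha)=\Phi(\alpha\otimes\Unit)=\alpha\otimes m_0$ for every irreducible $\alpha$ of $\mathscr{C}$. Hence $\alpha\otimes M_0$ is irreducible in $\mathscr{D}$ for each such $\alpha$, and $\alpha\mapsto[\alpha\otimes M_0]$ is a bijection from the irreducibles of $\mathscr{C}$ onto those of $\mathscr{D}$.

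I then define $F$ by $F(X)=X\otimes M_0$ on objects and $F(f)=f\otimes\id_{M_0}$ on morphisms. Since the module category is strict, $F(Y\otimes X)=(Y\otimes X)\otimes M_0=Y\otimes(X\otimes M_0)=Y\otimes F(X)$ holds on the nose, so $F$ is a strict module functor with trivial coherence constraints, and it is a C$^*$-functor because $F(f^*)=f^*\otimes\id_{M_0}=(f\otimes\id_{M_0})^*=F(f)^*$. Essential surjectivity is then immediate from the bijection of the previous paragraph together with semisimplicity and additivity of $F$: every object of $\mathscr{D}$ is a finite direct sum of irreducibles $\alpha\otimes M_0=F(\alpha)$.

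It remains to prove full faithfulness, which is the only delicate point. On irreducibles one has $\dim\Mor(\alpha,\beta)=\delta_{\alpha,\beta}$ in $\mathscr{C}$, while the based-module axiom $\tau(\langle b,c\rangle)=\delta_{b,c}$ gives $\dim\Mor(\alpha\otimes M_0,\beta\otimes M_0)=\delta_{[\alpha\otimes M_0],[\beta\otimes M_0]}=\delta_{\alpha,\beta}$ in $\mathscr{D}$, so the two morphism spaces have equal dimension. The linear map induced by $F$ is moreover injective: for $\alpha\neq\beta$ both spaces vanish, while for $\alpha=\beta$ any $f=\lambda\,\id_\alpha$ satisfies $F(f)=\lambda\,\id_{\alpha\otimes M_0}$ with $\alpha\otimes M_0\neq 0$ (the categorical counterpart of Lemma \ref{LemNoZero}), so $F(f)=0$ forces $\lambda=0$. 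Thus $F$ is bijective on morphisms between irreducibles; decomposing arbitrary objects into irreducibles and using additivity of $F$ upgrades this to full faithfulness on all of $\mathscr{C}$, and a fully faithful, essentially surjective C$^*$-module functor is an equivalence of module C$^*$-categories, which is the claim. The main obstacle, handled by combining the dimension count with this injectivity, is precisely to promote the mere numerical matching of morphism spaces supplied by $\Phi$ into genuine isomorphisms of $\Mor$-spaces induced by $F$.
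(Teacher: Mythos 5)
Your proof is correct and follows essentially the same route as the paper: both take the irreducible object $M_0\in\mathscr{D}$ corresponding to $\Unit$ under the based-module isomorphism and show that the strict module functor $X\mapsto X\otimes M_0$ is an equivalence. The paper merely asserts the equivalence once irreducibility of each $X\otimes M_0$ and essential surjectivity are noted, whereas you spell out the full-faithfulness verification (Schur's lemma, dimension count, injectivity on endomorphisms) that the paper leaves implicit.
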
 

\begin{proof} By assumption, we can find an element $\Unit  \in \mathscr{D}$ such that $X\otimes \Unit $ is irreducible for each $X$. As the functor $X\mapsto X\otimes \Unit $ also essentially surjective, it follows that $X\rightarrow X\otimes \Unit $ is an equivalence of C$^*$-categories. As we assume strictness, it is immediate that this is a (strict) equivalence of module C$^*$-categories. 
\end{proof}

\begin{Lem}\label{LemModEqQ} A rigid tensor C$^*$-category is torsion-free if and only if each non-trivial connected cofinite module C$^*$-category is equivalent to the tensor C$^*$-category as a module C$^*$-category.
\end{Lem}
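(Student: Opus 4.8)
The plan is to set up a dictionary between ergodic $Q$-systems in $(\mathscr{C},\otimes)$ and nonzero connected cofinite module C$^*$-categories, under which the trivial $Q$-systems correspond precisely to the module categories equivalent to $(\mathscr{C},\otimes)$ itself; the biconditional then drops out. In one direction the dictionary sends an ergodic $Q$-system $A$ to its category $\Mod_A$ of unitary right $A$-modules (a module C$^*$-category by the preceding Corollary), and in the other it sends a module C$^*$-category $\mathscr{D}$ with a chosen simple object $M$ to the internal endomorphism object $\underline{\End}(M)$, a $Q$-system in $\mathscr{C}$ representing $V\mapsto \Mor_{\mathscr{D}}(V\otimes M,M)$. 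The two facts that make this work are the reconstruction statement $\mathscr{D}\cong \Mod_{\underline{\End}(M)}$ whenever $M$ is a generator, and the internal-Hom computation $\underline{\End}(V\otimes M)\cong V\otimes \underline{\End}(M)\otimes \overline{V}$ valid in a rigid category. Applying the latter to $\mathscr{C}$ itself, viewed as a module over itself with $M=\Unit$ (so $\underline{\End}(\Unit)\cong\Unit$) and then with generator a simple object $X$, yields $\underline{\End}(X)\cong X\otimes\overline{X}$, hence $\Mod_{X\otimes\overline{X}}\cong(\mathscr{C},\otimes)$.

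For the implication that the module-category condition forces torsion-freeness, I would start from an arbitrary ergodic $Q$-system $A$ and analyse $\mathscr{D}=\Mod_A$. The regular module $A$ is simple, since $\End_A(A)\cong \Mor(\Unit,A)=\C$ by ergodicity, and every unitary module $\mathscr{F}$ with underlying object $F$ is a subobject of the free module $F\otimes A$ because its action map is a coisometry; decomposing $F$ into simple objects of $\mathscr{C}$ shows that $[\mathscr{F}]$ occurs in $\alpha\otimes [A]$ for some simple $\alpha$, so $\Fus(\Mod_A)$ is connected. For cofiniteness I would invoke Lemma \ref{LemCof}: the numbers $N_{\alpha,[A]}^{[A]}=\dim\Mor_A(A,\alpha\otimes A)\cong \dim\Mor(\overline{\alpha},A)$ vanish for all but finitely many $\alpha$ since $A$ is a fixed object of a rigid category, and connectedness then upgrades this to full cofiniteness. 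By hypothesis $\Mod_A\cong(\mathscr{C},\otimes)$; writing the equivalence as $F(V)=V\otimes\mathscr{E}$ with $\mathscr{E}=F(\Unit)$ and the regular module as $A\cong X\otimes\mathscr{E}$ for a simple $X$, one has $\underline{\End}(\mathscr{E})\cong\Unit$, so $A=\underline{\End}(A)\cong \underline{\End}(X\otimes\mathscr{E})\cong X\otimes\underline{\End}(\mathscr{E})\otimes\overline{X}\cong X\otimes\overline{X}$, proving $A$ trivial.

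For the converse, given a nonzero (that is, non-trivial) connected cofinite module C$^*$-category $\mathscr{D}$, I would pick a simple object $M$ by semisimplicity and form $A=\underline{\End}(M)$. Cofiniteness of $\Fus(\mathscr{D})$ guarantees that the $[V]$-multiplicities $N_{[V],[M]}^{[M]}=\dim\Mor(V\otimes M,M)$ of $\underline{\End}(M)$ vanish for all but finitely many $[V]$, so $\underline{\End}(M)$ is a genuine object of $\mathscr{C}$, and $A$ is ergodic since $\Mor(\Unit,A)=\End_{\mathscr{D}}(M)=\C$. Connectedness of $\Fus(\mathscr{D})$ says exactly that every simple object occurs in some $V\otimes M$, i.e. $M$ is a generator, so reconstruction gives $\mathscr{D}\cong\Mod_A$. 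If $(\mathscr{C},\otimes)$ is torsion-free then $A\cong X\otimes\overline{X}$ is trivial, whence $\mathscr{D}\cong\Mod_{X\otimes\overline{X}}\cong(\mathscr{C},\otimes)$ by the computation in the first paragraph. (Alternatively, once $A$ is trivial one knows $\Fus(\Mod_A)\cong\Fus(\mathscr{C})$ as based modules and may finish using Lemma \ref{LemTriv}.)

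The main obstacle is the unitary reconstruction machinery: verifying that the internal End $\underline{\End}(M)$ of a generating object in a cofinite module C$^*$-category is indeed a $Q$-system and that $N\mapsto\underline{\mathrm{Hom}}(M,N)$ is an equivalence of module C$^*$-categories onto $\Mod_{\underline{\End}(M)}$. This is where rigidity, cofiniteness (for the existence of internal Homs as honest objects), and the C$^*$-structure all enter. Granting this categorical framework — available for module C$^*$-categories in \cite{DCY13} — the remaining work is the routine internal-Hom manipulation $\underline{\End}(V\otimes M)\cong V\otimes\underline{\End}(M)\otimes\overline{V}$ together with the connectedness/cofiniteness bookkeeping sketched above.
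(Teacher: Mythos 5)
Your proposal is correct and follows essentially the same route as the paper: both directions run through the Ostrik-style dictionary between ergodic $Q$-systems and connected cofinite module C$^*$-categories ($A\mapsto$ unitary $A$-modules, $(\mathscr{D},M)\mapsto \underline{\End}(M)$), granting the same unitary reconstruction machinery that the paper imports from \cite{Ost03b} and \cite{GS12} with the finiteness-to-cofiniteness adaptation. If anything, your write-up is slightly more complete: you verify explicitly that the category of unitary $A$-modules is non-trivial, connected and cofinite (via the coisometric action map and Lemma \ref{LemCof}), a point the paper's converse direction leaves implicit.
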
 

\begin{proof} Let $(\mathscr{C},\otimes)$ be a rigid tensor C$^*$-category, and let $(\mathscr{D},\otimes)$ be a non-trivial connected cofinite module C$^*$-category. Recall the internal $\Mor$-functor of \cite{Ost03b} to make sense of $\underline{\Mor}(M,N) \in \mathscr{C}$ for objects $M,N\in \mathscr{D}$, defined by the identity \[\Mor_{\mathscr{C}}(X,\underline{\Mor}(M,N) \cong \Mor_{\mathscr{D}}(X\otimes M,N).\] Then with $M_0 \in \mathscr{D}$ a fixed irreducible object, $A = \underline{\mathrm{End}}(M_0)$ is in a natural way an ergodic $Q$-system \cite[Lemma 2.18]{GS12}. Moreover, by \cite[Theorem 1]{Ost03b}, $(\mathscr{D},\otimes)$ is equivalent to the module C$^*$-category $(\mathscr{D}',\otimes)$ of right unitary $A$-modules by means of the correspondence \[M \mapsto \underline{\Mor}(M_0,M),\qquad \underline{\Mor}(M_0,M)\otimes \underline{\End}(M_0)\rightarrow \underline{\Mor}(M_0,M).\] Here the assumption of finitiness in \cite[Theorem 1]{Ost03b} is easily seen to be replaceable by cofiniteness. Also, the unitarity of the above module is easily proven be a direct argument involving $2$-C$^*$-category language, as is the fact that the above is then a $^*$-functor and hence a $^*$-equivalence.

If now $(\mathscr{C},\otimes)$ is torsion-free, it follows that $A \cong X\otimes \overline{X}$, and we then have $(\mathscr{C},\otimes)\cong (\mathscr{D}',\otimes)$ as module C$^*$-categories by $Y\mapsto Y\otimes \overline{X}$. 

Conversely, let $A$ be an ergodic $Q$-system in $(\mathscr{C},\otimes)$, and let $(\mathscr{D},\otimes)$ be the associated module C$^*$-category of unitary $A$-modules. It is easy to see that we have an isomorphism of $Q$-systems $A \cong \underline{\End}_A(A)$, corresponding to the map $m\in \Mor_A(A\otimes A,A)$. By assumption, $(\mathscr{D},\otimes)$ is equivalent to $(\mathscr{C},\otimes)$ as a module C$^*$-category. Let $X\in \mathscr{C}$ be the irreducible object corresponding to the irreducible unitary $A$-module $A$ under this equivalence. Then \[A\cong \underline{\End}_A(A)\cong \underline{\End}_{\,\Unit }(X)\cong X\otimes \overline{X}.\] 
\end{proof} 

\begin{Theorem}\label{TheoTorTor} A rigid tensor C$^*$-category is torsion-free if it is strongly torsion-free.
\end{Theorem}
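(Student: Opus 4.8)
The plan is to assemble the two structural lemmas already established, using strong torsion-freeness only through the purely combinatorial input that $\Fus(\mathscr{C})$ is a torsion-free fusion ring. This mirrors exactly the argument of Theorem \ref{TheoTorFree} in the discrete quantum group setting, with Lemma \ref{LemModEqQ} and Lemma \ref{LemTriv} playing the roles that the $Q$-system/coaction dictionary played there.

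First I would invoke Lemma \ref{LemModEqQ} to replace the definition of torsion-freeness (every ergodic $Q$-system is trivial) by the equivalent statement that every non-trivial connected cofinite module C$^*$-category over $(\mathscr{C},\otimes)$ is equivalent, as a module C$^*$-category, to $(\mathscr{C},\otimes)$ itself. So let $(\mathscr{D},\otimes)$ be such a module C$^*$-category. By the definitions of connected and cofinite for module C$^*$-categories, the associated based $\Fus(\mathscr{C})$-module $\Fus(\mathscr{D})$ is connected and cofinite, and it is non-zero because $\mathscr{D}$ is non-trivial. Since $(\mathscr{C},\otimes)$ is strongly torsion-free, $\Fus(\mathscr{C})$ is a torsion-free fusion ring, and hence by definition any non-zero connected cofinite based module over it is isomorphic to the standard based module; applied to $\Fus(\mathscr{D})$ this yields $\Fus(\mathscr{D}) \cong \Fus(\mathscr{C})$ as based $\Fus(\mathscr{C})$-modules.

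Next I would feed this based-module isomorphism into Lemma \ref{LemTriv}, which upgrades the combinatorial identification $\Fus(\mathscr{D}) \cong \Fus(\mathscr{C})$ to an honest equivalence $(\mathscr{D},\otimes) \cong (\mathscr{C},\otimes)$ of module C$^*$-categories. This verifies the criterion of Lemma \ref{LemModEqQ}, and we conclude that $(\mathscr{C},\otimes)$ is torsion-free.

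The theorem itself therefore requires no new calculation: its entire content is the bookkeeping that matches up the categorical notions of connectedness and cofiniteness with their fusion-ring counterparts, so that the definition of a torsion-free fusion ring can be applied. The genuinely hard work sits in the two lemmas being cited—Lemma \ref{LemModEqQ}, which rests on Ostrik's reconstruction of module categories from $Q$-systems together with the internal $\underline{\Mor}$-functor, and Lemma \ref{LemTriv}, which reconstructs the tensor equivalence from the existence of an object $\Unit$ with $X \otimes \Unit$ irreducible for all $X$. The only point that would need a moment's care is confirming that ``non-trivial'' for $(\mathscr{D},\otimes)$ is precisely what guarantees $\Fus(\mathscr{D})$ is non-zero, so that the defining property of a torsion-free fusion ring is applicable.
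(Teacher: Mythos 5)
Your proposal is correct and is essentially identical to the paper's own proof: reduce via Lemma \ref{LemModEqQ} to showing every non-trivial connected cofinite module C$^*$-category is equivalent to $(\mathscr{C},\otimes)$, apply torsion-freeness of $\Fus(\mathscr{C})$ to get $\Fus(\mathscr{D})\cong\Fus(\mathscr{C})$ as based modules, and conclude with Lemma \ref{LemTriv}. The paper states this more tersely (leaving the fusion-ring step implicit), but the logical content is the same.
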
 

\begin{proof} By Lemma \ref{LemModEqQ}, it is sufficient to prove that each non-trivial connected cofinite module C$^*$-category is isomorphic to $(\mathscr{C},\otimes)$. However, this follows immediately from Lemma \ref{LemTriv}. 
\end{proof} 

\begin{Rem}\label{RemRootUn}
It is known that $\Z[\phi]$ appears as the fusion ring of a (unique) rigid tensor C$^*$-category, the \emph{Fibonacci tensor C$^*$-category} \cite{MS89,Ost03a}. By Proposition \ref{PropTorFree} and Theorem \ref{TheoTorTor}, it follows that this (finite!) tensor C$^*$-category is strongly torsion-free. In fact, this tensor C$^*$-category is the representation category of the even part of the quantum group at $5$th root of unity $SU(2)_3$, and one can show the even parts $SU(2)_{N,\mathrm{even}}$ of the quantum groups at root of unity $SU(2)_N$ are strongly torsion-free at all \emph{odd} levels $N$, using for example the fusion ring homomorphism $\Fus(SU(2)_N) \rightarrow \Fus(SU(2)_{N,\mathrm{even}})$ which sends the spin $\frac{k}{2}$ representation for odd $k$ to the spin $\frac{N-k}{2}$-representation and which is the identity on the integer spin representations, together with the classification of based $\Fus(SU(2)_N)$-modules in \cite{EK95}. 
\end{Rem}

Unlike the case of fusion rings, there also exist non-trivial finite torsion-free tensor C$^*$-categories with integer dimension function.

\begin{Exa} Consider the \emph{semion category}, that is, the rigid tensor C$^*$-category $(\mathscr{C},\otimes)$ of super-Hilbert spaces $\mathscr{H} = \mathscr{H}_0\oplus \mathscr{H}_1$ with the ordinary tensor product but with the non-trivial associator \[a_{\mathscr{H}_i\otimes \mathscr{H}_j\otimes \mathscr{H}_k} = (-1)^{ijk}.\] Then with $C_2$ the finite simple group of order two, $\Fus(\mathscr{C}) = \Fus(C_2) =\Z_I$ with $I= \{+,-\}$ where $+= \Unit $ and $-\otimes - = +$, and with trivial involution.

If $\mathscr{M}$ is a non-trivial connected cofinite module C$^*$-category over $\mathscr{C}$, and $J = \mathrm{Fus}(\mathscr{M})$, then $C_2$ acts transitively on $J$, and so $|J|=1$ or $|J|=2$. 

If $|J|=1$, then $\mathscr{M}$ is the category $\mathrm{Hilb}$ of Hilbert spaces, which means we have a concrete representation of $(\mathscr{C},\otimes)$ inside $\mathrm{Hilb}$. This is a contradiction \cite[Appendix E]{MS89}, \cite[Example 2.3]{EG04}. 

Hence $|J|=2$ and $C_2$ acts nontrivially on $J$. This means $(\Z_J,\otimes)\cong (\Z_I,\otimes)$ as based modules, and hence $(\mathscr{C},\otimes)\cong (\mathscr{M},\otimes)$ as module C$^*$-categories. 
\end{Exa}

\begin{Rem}
In fact, this tensor C$^*$-category is the representation category of the quantum group at $3$rd root of unity $SU(2)_1$, and by arguments as in the classification of `quantum subgroups of $SU(2)_N$' \cite{Oc02,KO02,DCY15}  one can see that all quantum groups at root of unity $SU(2)_N$ are torsion-free at all \emph{odd} levels $N$. Nevertheless, they are not strongly torsion-free by the existence of the non-trivial fusion ring homomorphism $\Fus(SU(2)_N) \rightarrow \Fus(SU(2)_{N,\mathrm{even}})$ (see Remark \ref{RemRootUn}).
\end{Rem}

We do not have any examples however of torsion-free discrete quantum groups which are not strongly torsion-free.

As an application of the theory developed in this section, we can give an easy proof of the following theorems.

\begin{Theorem}\label{TheoFreeProd2} Let $\{\Gam_s\mid s\in S\}$ be a collection of torsion-free discrete quantum groups. Then $\Gam = *_{s\in S}\Gam_s$ is torsion-free. 
\end{Theorem}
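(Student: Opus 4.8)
The plan is to reduce the statement to the purely combinatorial free-product result for fusion rings, Theorem \ref{TheoFreeProd}, by passing to module C$^*$-categories and exploiting the dictionary between ergodic $Q$-systems and connected cofinite module C$^*$-categories established in Lemma \ref{LemModEqQ}. Write $\mathscr{C} = \Corep(\Gam)$ and $\mathscr{C}_s = \Corep(\Gam_s)$, the latter being full tensor C$^*$-subcategories of $\mathscr{C}$ with $\Fus(\mathscr{C}) = *_{s}\Fus(\Gam_s)$. By Lemma \ref{LemModEqQ} it suffices to show that every non-trivial connected cofinite module C$^*$-category $\mathscr{D}$ over $\mathscr{C}$ is equivalent to $\mathscr{C}$, and by Lemma \ref{LemTriv} this follows once we prove the based-module isomorphism $\Fus(\mathscr{D}) \cong \Fus(\mathscr{C})$.

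The core of the argument is then to run the proof of Theorem \ref{TheoFreeProd} with $B = \Fus(\mathscr{D})$ in place of the abstract fusion module. First I would restrict the action of $\mathscr{C}$ on $\mathscr{D}$ to each factor $\mathscr{C}_s$; this turns $\mathscr{D}$ into a (no longer connected) cofinite module C$^*$-category over $\mathscr{C}_s$, which decomposes as a direct sum of connected cofinite $\mathscr{C}_s$-module C$^*$-categories. Here the hypothesis enters: since each $\Gam_s$ is torsion-free, Lemma \ref{LemModEqQ} forces every such connected component to be equivalent to $\mathscr{C}_s$, so that for each irreducible $b$ of $\mathscr{D}$ the connected $\Fus(\Gam_s)$-submodule $B_s^b$ generated by $b$ is isomorphic to the standard $\Fus(\Gam_s)$-module. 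This is precisely the input ``$B_s^b \cong A_s$'' on which the proof of Theorem \ref{TheoFreeProd} rests, and it is available even though $\Fus(\Gam_s)$ itself need not be a torsion-free fusion ring.

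With this in hand, the remainder of the proof of Theorem \ref{TheoFreeProd} --- the choice of a base object $e$, the inductive construction of $\alpha \otimes e \in J$ for every reduced word $\alpha$, their mutual distinctness, and the conclusion that $\alpha \mapsto \alpha \otimes e$ is a based-module isomorphism --- is purely formal in the free product $\Fus(\mathscr{C}) = *_s \Fus(\Gam_s)$ and carries over verbatim. There is, however, one further step of that proof which also used fusion-ring torsion-freeness, namely the torsion-freeness of the groups $\Gamma_s = \{g \in I_s \mid d(g) = 1\}$, previously deduced from Proposition \ref{PropSubFus}. Since we now assume only categorical torsion-freeness, I would instead supply this via Lemma \ref{LemFinSub}: a non-trivial finite subgroup of $\Gamma_s$ would span a finite-dimensional Hopf $^*$-subalgebra of $\C[\Gam_s]$, i.e.\ a non-trivial finite quantum subgroup, contradicting torsion-freeness of $\Gam_s$; hence $\Gamma_s$, and therefore $*_s \Gamma_s$, is torsion-free.

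The main obstacle is exactly this gap between strong and ordinary torsion-freeness: one cannot simply quote Corollary \ref{CorStrongTor}, because the factors are not assumed strongly torsion-free and $\Fus(\Gam_s)$ may fail to be a torsion-free fusion ring. The real work therefore lies in checking that every place where the combinatorial proof used fusion-ring torsion-freeness of a factor is fed instead by a genuinely categorical statement --- ``$B_s^b \cong$ standard'' from Lemma \ref{LemModEqQ} applied to honest module C$^*$-categories, and ``$\Gamma_s$ torsion-free'' from Lemma \ref{LemFinSub} --- after which Lemma \ref{LemTriv} upgrades the resulting based-module isomorphism back to an equivalence of module C$^*$-categories and Lemma \ref{LemModEqQ} delivers torsion-freeness of $\Gam$.
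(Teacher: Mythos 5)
Your proposal follows the same route as the paper's own proof: reduce via Lemma \ref{LemModEqQ} and Lemma \ref{LemTriv} to showing $\Fus(\mathscr{D}) \cong \Fus(\mathscr{C})$ as based modules, restrict the $\mathscr{C}$-action on $\mathscr{D}$ to each factor $\mathscr{C}_s = \Corep(\Gam_s)$ so that categorical torsion-freeness of $\Gam_s$ (again through Lemma \ref{LemModEqQ}) yields $B_s^b \cong \Fus(\Gam_s)$ as based $\Fus(\Gam_s)$-modules, and then run the combinatorial argument of Theorem \ref{TheoFreeProd}. The one place where you go beyond the paper is worth highlighting: the paper asserts that the proof of Theorem \ref{TheoFreeProd} ``can now be followed ad verbatim,'' but that proof invokes Proposition \ref{PropSubFus} to obtain torsion-freeness of the groups $\Gamma_s$ of dimension-one classes, and Proposition \ref{PropSubFus} requires the fusion ring $\Fus(\Gam_s)$ itself to be torsion-free --- exactly what is \emph{not} assumed in the present theorem. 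Your substitute argument, deducing torsion-freeness of $\Gamma_s$ from Lemma \ref{LemFinSub} (a non-trivial finite subgroup of $\Gamma_s$ spans a non-trivial finite-dimensional Hopf $^*$-subalgebra of $\C[\Gam_s]$, i.e.\ a finite quantum subgroup, contradicting torsion-freeness of $\Gam_s$), is correct and is precisely the patch needed to make the ``ad verbatim'' claim literally valid. So your proof is not only correct and structurally identical to the paper's, but slightly more careful at this point.
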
 
\begin{proof} Let $(\mathscr{C},\otimes) = \Corep(\Gam)$, and let $(\mathscr{D},\otimes)$ be a non-trivial connected, cofinite module C$^*$-category. Write $(\Z_I,\otimes) = \Fus(\Gam)$, and $(\Z_J,\otimes) = \Fus(\mathscr{D})$. As mentioned already, $\Fus(\Gam) = *_{s\in S} \Fus(\Gam_s)$. 

Let $X$ be an irreducible object in $\mathscr{D}$, and let $(\mathscr{D}_s^{X},\otimes)$ be the (connected, cofinite) module C$^*$-category for $(\mathscr{C}_s,\otimes) = \Corep(\Gam_s)$ generated by $X$. Then, in the notation of the proof of Theorem \ref{TheoFreeProd}, $\Fus(\mathscr{D}_s^X) = (B_s^{\lbrack X\rbrack},\otimes)$. However, by assumption of the torsion-freeness of $\Gam_s$, we have $(\mathscr{D}_s^X,\otimes) \cong (\mathscr{C}_s,\otimes)$ as a module C$^*$-category. Hence $(B_s^{\lbrack X\rbrack},\otimes) \cong (\Z_{I_s},\otimes)$ as a fusion module. 

The proof of Theorem \ref{TheoFreeProd} can now be followed ad verbatim to conclude that $\Fus(\mathscr{D}) \cong \Fus(\mathscr{C})$ as fusion modules. By Lemma \ref{LemTriv} and Lemma \ref{LemModEqQ}, it follows that $(\mathscr{C},\otimes)$ is torsion-free, and hence $\Gam$ is torsion-free.
\end{proof} 

\begin{Theorem}\label{TheoTensProd2} Let $\Gam_1,\Gam_2$ be torsion-free discrete quantum groups. Then $\Gam_1\times \Gam_2$ is torsion-free.
\end{Theorem}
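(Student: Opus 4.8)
The plan is to transport the combinatorial argument of Theorem~\ref{TheoTenFus} to the categorical level, in the same way that Theorem~\ref{TheoFreeProd2} transports Theorem~\ref{TheoFreeProd}. Write $(\mathscr{C}_i,\otimes) = \Corep(\Gam_i)$ and $(\mathscr{C},\otimes) = \Corep(\Gam_1\times \Gam_2)$; the latter contains $\mathscr{C}_1$ and $\mathscr{C}_2$ as commuting tensor C$^*$-subcategories, and by \cite{Wan95} its fusion ring is $\Fus(\Gam_1\times \Gam_2) = (\Z_{I_1}\odot \Z_{I_2},\otimes)$, where $(\Z_{I_i},\otimes) = \Fus(\Gam_i)$. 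By Lemma~\ref{LemModEqQ} it is enough to show that every non-trivial connected cofinite module C$^*$-category $(\mathscr{D},\otimes)$ over $\mathscr{C}$ is equivalent to $\mathscr{C}$. Suppose this fails for some $\mathscr{D}$; then by Lemma~\ref{LemTriv} the fusion module $\Fus(\mathscr{D}) = (\Z_J,\otimes)$ is a non-standard, connected, cofinite fusion module over $\Z_{I_1}\odot \Z_{I_2}$.

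Next I would isolate the two places where the proof of Theorem~\ref{TheoTenFus} uses torsion-freeness of the fusion rings $\Fus(\Gam_i)$, and supply both from the (weaker) quantum-group torsion-freeness of the $\Gam_i$; this is the only real difference with the strongly torsion-free case. For an irreducible object $X\in \mathscr{D}$, the $\mathscr{C}_i$-module C$^*$-category $(\mathscr{D}_i^{\lbrack X\rbrack},\otimes)$ generated by $X$ is connected, and is cofinite by restricting the cofiniteness of $\Fus(\mathscr{D})$ along the embedded copy of $\Z_{I_i}$ and applying Lemma~\ref{LemCof}; torsion-freeness of $\Gam_i$ then gives $(\mathscr{D}_i^{\lbrack X\rbrack},\otimes)\cong (\mathscr{C}_i,\otimes)$, hence $B_i^{\lbrack X\rbrack}\cong \Z_{I_i}$ as based $\Z_{I_i}$-modules. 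This is the categorical substitute for the hypothesis $B_s^b\cong A_s$ used throughout Theorem~\ref{TheoFreeProd} and at the start of Theorem~\ref{TheoTenFus}. In place of the appeal to Proposition~\ref{PropSubFus}, I note that the group $\Gamma_i = \{g\in I_i\mid d(g)=1\}$ is torsion-free, since any non-trivial finite subgroup would give a non-trivial finite quantum subgroup of $\Gam_i$, contradicting torsion-freeness by Lemma~\ref{LemFinSub}.

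With these two facts available, I would run the proof of Theorem~\ref{TheoTenFus} verbatim: first produce $e\in J$ with $\alpha\otimes e$ irreducible for every $\alpha\in I_1\sqcup I_2$ and mutually distinct within each $I_i$ (the dimension-reduction argument of Theorem~\ref{TheoFreeProd}, now using $B_i^{\lbrack X\rbrack}\cong \Z_{I_i}$); then, since $\langle e,e\rangle\neq \Unit$ by connectedness, extract $\alpha_1\in I_1$, $\alpha_2\in I_2$, not both trivial, with $\alpha_1\otimes e = \alpha_2\otimes e$ and $\overline{\alpha}_1\otimes e=\overline{\alpha}_2\otimes e$; and finally obtain an identification of the fusion subrings $(\Z_{I_1'},\otimes)$ and $(\Z_{I_2'},\otimes)$ generated by $\alpha_1$ and $\alpha_2$, with finiteness forced by cofiniteness and non-triviality forced by torsion-freeness of the $\Gamma_i$. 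Each $(\Z_{I_i'},\otimes)$ is then a non-trivial finite fusion subring of $\Fus(\Gam_i)$, hence arises from a non-trivial finite quantum subgroup of $\Gam_i$, which contradicts torsion-freeness of $\Gam_i$ by Lemma~\ref{LemFinSub}. Thus no non-standard $\mathscr{D}$ can exist, and $\Gam_1\times \Gam_2$ is torsion-free.

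The main obstacle is not a computation but the verification underlying the second paragraph: one must check that the proof of Theorem~\ref{TheoTenFus} invokes torsion-freeness of the $\Fus(\Gam_i)$ \emph{only} through the standardness of the $\mathscr{C}_i$-submodule generated by an object and the torsion-freeness of the groups $\Gamma_i$, every other step being fusion-module combinatorics valid over an arbitrary fusion ring. The genuinely delicate point is the passage from a finite fusion subring to a finite quantum subgroup, which is what makes Lemma~\ref{LemFinSub} applicable and thereby distinguishes this argument, which needs only torsion-freeness of the factors, from the Cartesian-product statement for strongly torsion-free quantum groups.
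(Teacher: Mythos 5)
Your proposal is correct and takes essentially the same route as the paper: the paper's own proof consists of the remark that the argument of Theorem~\ref{TheoTenFus} can be copied ad verbatim at the categorical level (exactly as Theorem~\ref{TheoFreeProd2} transports Theorem~\ref{TheoFreeProd}) to conclude that a non-standard module category would force a non-trivial finite quantum subgroup of $\Gam_1$, which is then excluded by Lemma~\ref{LemFinSub}. The two substitutions you spell out---standardness of $(\mathscr{D}_i^{[X]},\otimes)$ coming from torsion-freeness of $\Gam_i$ via Lemmas~\ref{LemTriv} and~\ref{LemModEqQ}, and torsion-freeness of the groups $\Gamma_i$ obtained from Lemma~\ref{LemFinSub} in place of Proposition~\ref{PropSubFus}---are precisely the content of the paper's ``ad verbatim'' claim, so your write-up is in effect a more detailed version of the published proof.
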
 

\begin{proof} Assume $\Gam_1,\Gam_2$  are torsion-free. Then one again sees that the proof of Theorem \ref{TheoTenFus} can be copied ad verbatim to conclude that either $\Gam_1\times \Gam_2$ is torsion-free or $\Gam_1$ has a non-trivial finite quantum subgroup. However, the latter is excluded by Lemma \ref{LemFinSub}. 
\end{proof}


\begin{thebibliography}{00}
\bibitem{BS89} S. Baaj and G. Skandalis, C$^*$-alg\`{e}bres de Hopf et th\'{e}orie de Kasparov \'{e}quivariante, \emph{K}-theory \textbf{2} (1989), 683--721.
\bibitem{Ban96} T. Banica, Th\'{e}orie des repr\'{e}sentations du groupe quantique compact libre $O(n)$, \emph{C. R. Acad. Sci. Paris} \textbf{322} (1996), 241--244.
\bibitem{Ban97} T. Banica, Le groupe quantique compact libre $U(n)$, \emph{Comm. Math. Phys.} \textbf{190} (1997), 143--172.
\bibitem{Ban99} T. Banica, Fusion rules for representations of compact quantum groups, \emph{Exposition. Math.} \textbf{17} (1999), 313--337. 
\bibitem{Ban02} T. Banica, Quantum groups and Fuss-Catalan algebras, \emph{Comm. Math. Phys.} \textbf{226} (2002), 221--232.
\bibitem{BDV05} J. Bichon, A. De Rijdt and S. Vaes, Ergodic coactions with large quantum multiplicity and monoidal equivalence of quantum groups, \emph{Comm. Math. Phys.} \textbf{262} (2006), 703--728.
\bibitem{DCY13} K. De Commer and M. Yamashita, Tannaka-Kre$\breve{\i}$n duality for compact quantum homogeneous spaces. I. General theory, \emph{Theory Appl. Categ.} \textbf{28} (31) (2013), 1099--1138. 
\bibitem{DCY15} K. De Commer and M. Yamashita, Tannaka-Kre$\breve{\i}$n duality for compact quantum homogeneous spaces II. Classification of quantum homogeneous spaces for quantum $SU(2)$, \textbf{2015} (708) (2015), 143--171. 
\bibitem{EG04} P. Etingof and S. Gelaki, Finite dimensional quasi-Hopf algebras with radical of codimension 2, \emph{Math. Res. Lett.} \textbf{11} (2004) (5--6), 685--696. 
\bibitem{EK95} P. Etingof, M. Khovanov, Representations of tensor categories and Dynkin diagrams, \emph{IMRN} (5) (1995), 235--247.
\bibitem{GHJ89} F.M. Goodman, P. de la Harpe and V. Jones, Coxeter graphs and towers of algebras, \emph{MSRI Publications} \textbf{14}, Springer-Verlag Berlin and New York (1989), 286pp.
\bibitem{Gof12} M. Goffeng, A remark on twists and the notion of torsion-free discrete quantum groups, \emph{Algebras and Representation Theory} \textbf{15} (5) (2012), 817--834.
\bibitem{GS12} P. Grossman and N. Snyder,  Quantum Subgroups of the Haagerup Fusion Categories, \emph{Comm. Math. Phys.} \textbf{311} (3) (2012), 617--643.
\bibitem{KO02} A. Kirillov and V. Ostrik, On a $q$-Analogue of the McKay Correspondence and the ADE Classification of $\widehat{\mathfrak{sl}}(2)$ Conformal Field Theories, \emph{Adv. Math.} \textbf{171} (2) (2002), 183--227.
\bibitem{LR97} R. Longo and J.E. Roberts, A theory of dimension, \emph{K-Theory} \textbf{11} (1997), 103--159.
\bibitem{Lus87} G. Lusztig, Leading coefficients of character values of Hecke algebras, \emph{Proc. Symp. in Pure Math.} \textbf{47} (2), Amer. Math. Soc., 235--262.
\bibitem{Mey08} R. Meyer, Homological algebra in bivariant K-theory and other triangulated categories. II, \emph{Tbilisi Mathematical Journal} \textbf{1} (2008), 165--210.
\bibitem{MS89} G. Moore, N. Seiberg, Classical and quantum Conformal Field Theory \emph{Comm. Math. Phys.} \textbf{123} (1989), 177--254.
\bibitem{NT13} S. Neshveyev and L. Tuset, Compact quantum groups and their representation categories, Vol. \textbf{20} of Cours Sp\'{e}cialis\'{e}s [Specialized Courses], \emph{Soci\'{e}t\'{e} Math\'{e}matique de France}, Paris (2013).
\bibitem{Oc02} A. Ocneanu, The classification of subgroups of quantum SU(N), In ''Quantum symmetries in theoretical physics and mathematics (Bariloche, 2000)", \emph{Contemp. Math.} \textbf{294}, 133--159, Amer. Math. Soc., Providence, RI, 2002.
\bibitem{Ost03a} V. Ostrik, Fusion categories of rank 2, \emph{Math. Res. Lett.} \textbf{10} (2--3) (2003), 177--183. 
\bibitem{Ost03b} V. Ostrik, Module categories, weak Hopf algebras and modular invariants, \emph{Transform. Groups} \textbf{8} (2) (2003), 177--206.
\bibitem{Tom08} R. Tomatsu, Compact quantum ergodic systems, \emph{J. Funct. Anal.} \textbf{254} (2008), 1--83.
\bibitem{VDW96} A. Van Daele and S. Wang, Universal quantum groups, \emph{Internat. J. Math.} \textbf{7} (1996), 255--263.
\bibitem{VV11} R. Vergnioux and C. Voigt, The $K$-theory of free quantum groups,  \emph{Math. Ann.} \textbf{357} (1) (2013), 355--400. 
\bibitem{Voi11} C. Voigt, The Baum-Connes conjecture for free orthogonal quantum groups, \emph{Adv. Math.} \textbf{227} (5) (2011), 1873--1913.
\bibitem{Wan95} S. Wang, Free products of compact quantum groups, \emph{Comm. Math. Phys.} \textbf{167} (1995), 671--692.
\bibitem{Wan02} S. Wang, Structure and isomorphism classification of compact quantum groups $A_u(Q)$ and $B_u(Q)$, \emph{J. Operator Theory} \textbf{48} (3) (2002), 573--583.  
\bibitem{Wat90} Y. Watatani, Index for C$^*$--subalgebras, \emph{Memoir Amer. Math. Soc.} \textbf{83} (424) (1990), 1--117.
\bibitem{Wor87} S.L. Woronowicz, Twisted $SU(2)$ group. An example of a noncommutative differential calculus, \emph{Publ. RIMS Kyoto} \textbf{23} (1987), 117--181.
\end{thebibliography}
\end{document}